\newcounter{oldtocdepth}
\DeclareMathAlphabet{\mathpzc}{OT1}{pzc}{m}{it}
\newtheorem{Definition}{Definition}[subsection]
\newtheorem{Theorem}[Definition]{Theorem}
\newtheorem{Lemma}[Definition]{Lemma}
\newtheorem{Proposition}[Definition]{Proposition}
\newtheorem{Corollary}[Definition]{Corollary}
\newtheorem{Example}[Definition]{Example}
\newtheorem{Remark}[Definition]{Remark}
\newtheorem{Hypothesis}[]{Hypothesis}
\newtheorem{Conjecture}[]{Conjecture}
\newtheorem*{Theorem*}{Theorem}
\newtheorem*{Example*}{Example}
\newtheorem*{Remark*}{Remark}
\DeclareMathOperator\A{\mathbf{A}}
\DeclareMathOperator\B{\mathbf{B}}
\DeclareMathOperator\C{\mathbf{C}}
\DeclareMathOperator\D{\mathbf{D}}
\DeclareMathOperator\F{\mathbf{F}\!}
\DeclareMathOperator\Q{\mathbf{Q}}
\DeclareMathOperator\R{\mathbf{R}}
\DeclareMathOperator\T{\mathbf{T}}
\DeclareMathOperator\V{\mathbf{V}}
\DeclareMathOperator\Z{\mathbf{Z}}
\DeclareMathOperator\bft{\mathbf{t}}
\DeclareMathOperator\bfu{\mathbf{u}}
\DeclareMathOperator\bfitx{\textbf{\textit{x}}}
\DeclareMathOperator\bfity{\textbf{\textit{y}}}
\DeclareMathOperator\bbF{\mathbb{F}}
\DeclareMathOperator\bbG{\mathbb{G}}
\DeclareMathOperator\bbH{\mathbb{H}}
\DeclareMathOperator\bbT{\mathbb{T}}
\DeclareMathOperator\bbX{\mathbb{X}}
\DeclareMathOperator\calD{\mathcal{D}}
\DeclareMathOperator\calE{\mathcal{E}}
\DeclareMathOperator\calG{\mathcal{G}}
\DeclareMathOperator\calQ{\mathcal{Q}}
\DeclareMathOperator\calR{\mathcal{R}}
\DeclareMathOperator\calU{\mathcal{U}}
\DeclareMathOperator\calV{\mathcal{V}}
\DeclareMathOperator\calW{\mathcal{W}}
\DeclareMathOperator\calX{\mathcal{X}}
\DeclareMathOperator\calZ{\mathcal{Z}}
\DeclareMathOperator\scrD{\mathscr{D}}
\DeclareMathOperator\scrO{\mathscr{O}}
\DeclareMathOperator\scrT{\mathscr{T}}
\DeclareMathOperator\frakI{\mathfrak{I}}
\DeclareMathOperator\fraka{\mathfrak{a}}
\DeclareMathOperator\GL{GL}
\DeclareMathOperator\GSp{GSp}
\DeclareMathOperator\GSpin{GSpin}
\DeclareMathOperator\GO{GO}
\DeclareMathOperator\Sp{Sp}
\DeclareMathOperator\End{End}
\DeclareMathOperator\Hom{Hom}
\DeclareMathOperator\ad{ad}
\DeclareMathOperator\adj{adj}
\DeclareMathOperator\alg{alg}
\DeclareMathOperator\an{an}
\DeclareMathOperator\bad{bad}
\DeclareMathOperator\charpoly{char.poly}
\DeclareMathOperator\cl{cl}
\DeclareMathOperator\cris{cris}
\DeclareMathOperator\cts{cts}
\DeclareMathOperator\Det{Det}
\DeclareMathOperator\diag{diag}
\DeclareMathOperator\Fil{Fil}
\DeclareMathOperator\Frob{Frob}
\DeclareMathOperator\Gal{Gal}
\DeclareMathOperator\Hecke{Hecke}
\DeclareMathOperator\image{image}
\DeclareMathOperator\irr{irr}
\DeclareMathOperator\Iw{Iw}
\DeclareMathOperator\length{length}
\DeclareMathOperator\ord{ord}
\DeclareMathOperator\one{\mathbbm{1}}
\DeclareMathOperator\oneanti{\breve{\one}}
\DeclareMathOperator\opp{opp}
\DeclareMathOperator\Par{par}
\DeclareMathOperator\red{red}
\DeclareMathOperator\rig{rig}
\DeclareMathOperator\Res{Res}
\DeclareMathOperator\Sbad{\mathtt{S}_{\bad}}
\DeclareMathOperator\Spa{Spa}
\DeclareMathOperator\Spec{Spec}
\DeclareMathOperator\spin{spin}
\DeclareMathOperator\tame{tame}
\DeclareMathOperator\tol{tol}
\DeclareMathOperator\trans{^{\mathtt{t}}\!}
\DeclareMathOperator\univ{univ}
\DeclareMathOperator\wt{wt}
\DeclareMathOperator\Weyl{Weyl}
\DeclareMathOperator\Sch{\textbf{\textsc{Sch}}}
\DeclareMathOperator\Sets{\textbf{\textsc{Sets}}}
\DeclareMathOperator\Ar{\textbf{\textsc{Ar}}}
\DeclareMathOperator\bfSigma{\boldsymbol{\Sigma}}
\DeclareMathOperator\bfalpha{\boldsymbol{\alpha}}
\DeclareMathOperator\bfbeta{\boldsymbol{\beta}}
\DeclareMathOperator\bfgamma{\boldsymbol{\gamma}}
\DeclareMathOperator\bfdelta{\boldsymbol{\delta}}
\DeclareMathOperator\bflambda{\boldsymbol{\lambda}}
\DeclareMathOperator\bftau{\boldsymbol{\tau}}
\DeclareMathOperator\bfupsilon{\boldsymbol{\upsilon}}
\DeclareMathOperator\llbrack{\![\![\!}
\DeclareMathOperator\rrbrack{\!]\!]}
\renewcommand{\maketitle}{\bgroup\setlength{\parindent}{0pt}
\begin{flushleft}
  \LARGE{\textbf{\@title}}
  
  \vspace{4mm}
  
  \large{\textsc{\@author}}
  
  \vspace{4mm}
\end{flushleft}\egroup
}
\title{On adjoint Bloch--Kato Selmer groups for $\mathrm{GSp}_{2g}$}
\author{Ju-Feng Wu}
\date{}
\begin{document}

\maketitle

{\footnotesize 
\paragraph{Abstract.} We study the adjoint Bloch--Kato Selmer groups attached to a classical point in the cuspidal eigenvariety associated with $\mathrm{GSp}_{2g}$. Our strategy is based on the study of families of Galois representations on the eigenvariety, which is inspired by the book of J. Bellaiche and G. Chenevier. 
}

{\footnotesize
\paragraph{Résumé.} On étudie de group de Selmer adjoint défini par Bloch et Kato attachés à un point classique dans la variété de Hecke cuspidale pour $\mathrm{GSp}_{2g}$. Notre stratégie passe par l'étude de familles de représentations de Galois sur la variété de Hecke cuspidale, qui est inspirée par le livre de J. Bellaïche et G. Chenevier. 
}

\iffalse
{\footnotesize
\paragraph{Résumé.} Dans ce papier, on complète notre travail précédent sur l'accouplement défini sur la variété de Hecke cuspidale pour $\mathrm{GSp}_{2g}$.
Plus précisément, on justifie (conjecturalement) le nom ``fonction $L$ $p$-adique adjointe'' donné à l'objet construit dans notre travail précédent. 
Notre stratégie passe par l'étude de familles de représentations de Galois sur la variété de Hecke cuspidale pour $\mathrm{GSp}_{2g}$, et on relie la fonction $L$ $p$-adique adjointe au group de Selmer adjoint défini par Bloch et Kato. Cette stratégie est inspirée par le livre de J. Bellaïche et G. Chenevier. 
}
\fi

\tableofcontents

\section{Introduction}

In this paper, we study the adjoint Bloch--Kato Selmer groups attached to a classical point in the cuspidal eigenvariety $\calE_0$ associated with $\GSp_{2g}$. Our strategy is based on the study of families of Galois representations on $\calE_0$, which is inspired by the book of J. Bellaiche and G. Chenevier (\cite{Bellaiche-Chenevier-book}). 

\subsection*{An overview}

Fix a prime number $p$ and a positive integer $g\in \Z_{> 0}$. Let $X(\C)$ be the Siegel modular variety of a fixed tame level structure away from $p$; and let $X_{\Iw}(\C)$ be the Siegel modular variety over $X(\C)$ with an extra Iwahori level at $p$. We let $N$ be the product of primes that divide the level of $X(\C)$.

\vspace{2mm}

On $X_{\Iw}(\C)$, one can consider the \emph{overconvergent parabolic cohomology groups} $H_{\Par, \kappa}^{\tol}$. Following the formalism in \cite{Hansen-PhD}, one can use $H_{\Par, \kappa}^{\tol}$ to construct the (reduced equidimensional) \emph{cuspidal eigenvariety} $\calE_0$, parametrising finite-slope families of eigenclasses in the overconvergent parabolic cohomology groups. See \S \ref{subsection: cohomology groups} and \S \ref{subsection: Hecke operators and eigenvariety} for a review.

\vspace{2mm}

Given a point $\bfitx\in \calE_0$ whose weight is a dominant algebraic weight and whose slope is small enough, it is predicted by R. Langlands that there is a continuous Galois representation \[
    \rho_{\bfitx}: \Gal_{\Q} \xrightarrow{\rho_{\bfitx}^{\spin}} \GSpin_{2g+1}(\overline{\Q}_p) \xrightarrow{ \spin} \GL_{2^g}(\overline{\Q}_p)\footnote{ See \cite[Lecture 20]{Fulton-Harris} for the definition and properties of the spin representation $\spin: \GSpin_{2g+1} \rightarrow \GL_{2^g}$.}
\] whose characteristic polynomials of the Frobenii away from $Np$ are equal to the Hecke polynomials away from $Np$. Here, $\Gal_{\Q}$ denotes the absolute Galois group of $\Q$.  

\vspace{2mm}

Let $\ad^0 \rho_{\bfitx}^{\spin}$ be the (trace-$0$) adjoint representation of $\rho_{\bfitx}^{\spin}$ and consider the adjoint Bloch--Kato Selmer group $H_{f}^1(\Q, \ad^0 \rho_{\bfitx}^{\spin})$. We have the following conjecture of S. Bloch and K. Kato: 

\begin{Conjecture}[Bloch--Kato conjecture]\label{Conjecture: BK}
\begin{enumerate}
    \item[(i)] The order of vanishing of the adjoint $L$-function $L(\ad^0\rho_{\bfitx}^{\spin}, s)$ at $s =1$ is equal to the dimension of   $H_{f}^1(\Q, \ad^0 \rho_{\bfitx}^{\spin})$. 
    \item[(ii)] The adjoint Bloch--Kato Selmer group $H_{f}^1(\Q, \ad^0 \rho_{\bfitx}^{\spin})$ vanishes. 
\end{enumerate}
\end{Conjecture}

The aim of this paper is to show that, under certain assumptions, $H_{f}^1(\Q, \ad^0 \rho_{\bfitx}^{\spin})$ does vanish. In particular, the following natural hypotheses are assumed to achieve our goal: \begin{enumerate}
    \item[$\bullet$] Hypothesis \ref{Hyp: associated Gal. rep}: Roughly speaking, this hypothesis states that the aforementioned philosophy of Langlands holds true.
    \item[$\bullet$] Hypothesis \ref{Hyp: spin functoriality}: Roughly speaking, this hypothesis ensures that there exists a real finite extension $L$ of $\Q$ and a generic cuspidal automorphic representation $\GL_{2^g}(\A_L)$ whose associated Galois representation coincide with $\rho_{\bfitx}|_{\Gal_{L}}$, where $\A_L$ is the ring of adèles of $L$ and $\Gal_L$ is the absolute Galois group of $L$. 
    \item[$\bullet$] Hypothesis \ref{Hypothesis: minimal ramification}: This is a technical hypothesis, which ensures us to obtain a $\GSpin_{2g+1}$-valued Galois representation with coefficients in the local eigenalgebra of $\bfitx$ and that the chosen tame $\Gamma^{(p)}$ implies a particular ramification type of this Galois representation at bad primes.
\end{enumerate}

\begin{Theorem*}[Corollary \ref{Corollary: main result for BK Selmer group}]\label{Theorem: Bloch--Kato, R=T and geometry of eigenvarieties; intro}
Let $\bfitx\in \calE_0$ whose weight is a dominant algebraic weight and whose slope is small enough. Suppose the following assumptions hold:\footnote{ Since many hypotheses are assumed this theorem, examples of these assumptions are discussed in \S \ref{subsection: examples}. } \begin{enumerate}
    \item[(I)] Standard assumptions:\begin{enumerate}
        \item[$\bullet$] The point $\bfitx$ corresponds to a $p$-stabilisation of an eigenclass of tame level (see \S \ref{subsection: Galois representation for GSp} and \S \ref{subsection: families of representations on eigenvariety} for more discussion).
        \item[$\bullet$] Hypothesis \ref{Hyp: associated Gal. rep} holds so that we get a $\GSpin_{2g+1}$-valued Galois representation $\rho_{\bfitx}^{\spin}$ attached to $\bfitx$. We write $\rho_{\bfitx} := \spin \circ \rho_{\bfitx}^{\spin}$ be the associated $\GL_{2^g}$-valued Galois representation.
    \end{enumerate}
    \item[(II)] Technical assumption: Hypothesis \ref{Hypothesis: minimal ramification} hold.
    \item[(III)] Assumptions used in the strategy of \cite{Bellaiche-Chenevier-book}:\begin{enumerate}
        \item[$\bullet$]  The restriction $\rho_{\bfitx}|_{\Gal_{\Q_p}}$ admits a refinement $\bbF_{\bullet}^{\bfitx}$ that satisfies (REG) and (NCR) (see \S \ref{subsection: pseudocharacters and families} for definitions of $\bbF_{\bullet}^{\bfitx}$, (REG) and (NCR)).
        \item[$\bullet$] The restriction $\rho_{\bfitx}|_{\Gal_{\Q_p}}$ is not isomorphic to its twist by the $p$-adic cyclotomic character.
    \end{enumerate}
    \item[(IV)] Assumptions to apply \cite{Newton--Thorne}:\begin{enumerate}
        \item[$\bullet$] Hypothesis \ref{Hyp: spin functoriality} holds.
        \item[$\bullet$] The cuspidal automorphic representation $\pi_{\bfitx}$ of $\GL_{2^g}(\A_L)$ ensured by Hypothesis \ref{Hyp: spin functoriality} is regular algebraic and polarised (see, for example, \cite[\S 2.1]{BLGGT14}). 
    \item[$\bullet$] The image $\rho_{\bfitx}(\Gal_{L(\zeta_{p^{\infty}})})$ is enormous (see \cite[Definition 2.27]{Newton--Thorne}). 
    \end{enumerate}
\end{enumerate}
Then \begin{enumerate}
    \item[(i)] The adjoint Bloch--Kato Selmer group $H_f^1(\Q, \ad^0\rho_{\bfitx}^{\spin})$ associated with $\rho_{\bfitx}^{\spin}$ vanishes. 
    \item[(ii)] There is an `infinitesimal $R=T$ theorem'  locally at $\bfitx$.
\end{enumerate}
\end{Theorem*}

\begin{proof}[Strategy of the proof] 
We now summarise the strategy to achieve the statement:

\vspace{2mm}

    \noindent \textbf{Step 1.} Using the standard assumptions in the theorem and Proposition \ref{Prop: Tp and Up eigenvalues}, we construct a refined family of Galois representations $(\calE_0^{\irr}, \Det^{\univ}, \calX_{\heartsuit}^{\cl}, \{\alpha_i: i=1, ..., 2^g\}, \{F_i: i=1, ..., 2^g\})$ in Theorem \ref{Theorem: refined family on the cuspidal eigenvariety}.
    
    \vspace{2mm}
    
    \noindent \textbf{Step 2.} Following the strategy in \cite{Bellaiche-Chenevier-book} and using the assumption that $\rho_{\bfitx}$ admits a refinement $\bbF_{\bullet}^{\bfitx}$, we define global deformation problems $\scrD_{\bfitx, f}^{\spin}$ and $\scrD_{\bfitx, \bbF_{\bullet}^{\bfitx}}^{\spin}$. It is standard in Galois deformation theory that these two functors are pro-representable by complete noetherian local rings by $R_{\bfitx, f}^{\univ}$ and $R_{\bfitx, \bbF_{\bullet}^{\bfitx}}^{\spin}$ respectively.
    
    \vspace{2mm}
    
    \noindent \textbf{Step 3.} By applying a theorem of Bellaïche--Chenevier to the refined family of Galois representations in Step 1 and combining Hypothesis \ref{Hypothesis: minimal ramification} and (III) in the theorem, we deduce in Proposition \ref{Proposition: relation between R and T} the following statements: \begin{enumerate}
        \item[(i)] There exists a canonical ring homomorphism $R^{\univ}_{\bfitx, \bbF_{\bullet}^{\bfitx}}\rightarrow \bbT_{\bfitx}$, where $\bbT_{\bfitx}$ is the local eigenalgebra at $\bfitx$.
    
        \item[(ii)] If the adjoint Bloch--Kato Selmer group $H_f^1(\Q, \ad^0\rho_{\bfitx}^{\spin})$ vanishes, then the canonical map in $R^{\univ}_{\bfitx, \bbF_{\bullet}^{\bfitx}}\rightarrow \bbT_{\bfitx}$ is an isomorphism (an `infinitesimal $R=T$ theorem').
    \end{enumerate}

    \vspace{2mm}
    
    \noindent \textbf{Step 4.} To conclude the result, the assumption (IV) and \cite[Theorem 5.3]{Newton--Thorne} imply that $H_f^1(\Q, \ad^0\rho_{\bfitx}^{\spin}) = 0$. The desired assertions then follow.
\end{proof}

We close this introduction with the remark that one can also deduce the vanishing of the adjoint Bloch--Kato Selmer group without assuming (IV) in the theorem above but with two other (probably strong) assumptions (see Corollary \ref{Corollary: etale + R=T implies vanishing of adjoint BK}). Such a statement shall allow one to obtain a (conjectural) link between the \emph{$p$-adic adjoint $L$-function} $L^{\adj}$ defined in \cite{Wu-pairing} and the adjoint Bloch--Kato Selmer group (see Remark \ref{Remark: relation to the pairing}). This is, in fact, the original motivation of our study in this paper.

\subsection*{Conventions}

Throughout this paper, we fix the following: \begin{enumerate}
    \item[$\bullet$] $g\in \Z_{\geq 1}$.
    \item[$\bullet$] For any prime number $\ell$, we fix once and forever an algebraic closure $\overline{\Q}_{\ell}$ of $\Q_{\ell}$ and an algebraic isomorphism $\C_{\ell}\simeq \C$, where $\C_{\ell}$ is the $\ell$-adic completion of $\overline{\Q}_{\ell}$. We write $\Gal_{\Q_{\ell}}$ for the absolute Galois group $\Gal(\overline{\Q}_{\ell}/\Q_{\ell})$. We also fix the $\ell$-adic absolute value on $\C_{\ell}$ so that $|\ell|=\ell^{-1}$.
    \item[$\bullet$] We also fix an algebraic closure $\overline{\Q}$ of $\Q$ and embeddings $\overline{\Q}_{\ell} \hookleftarrow \overline{\Q} \hookrightarrow \C$, which is compatible with the chosen isomorphisms $\C_{\ell} \simeq \C$. We analogously write $\Gal_{\Q}$ for the absolute group $\Gal(\overline{\Q}/\Q)$ and identify $\Gal_{\Q_{\ell}}$ as a (decomposition) subgroup of $\Gal_{\Q}$. 
    \item[$\bullet$] We fix an odd prime number $p\in \Z_{> 0}$. 
    \item[$\bullet$] For $n\in \Z_{\geq 1}$ and any set $R$, we denote by $M_n(R)$ the set of $n$ by $n$ matrices with coefficients in $R$.
    \item[$\bullet$] The transpose of a matrix $\bfalpha$ is denoted by $\trans\bfalpha$.
    \item[$\bullet$] For any $n\in \Z_{\geq 1}$, we denote by $\one_n$ the $n\times n$ identity matrix and denote by $\oneanti_n$ the $n\times n$ anti-diagonal matrix whose non-zero entries are $1$; \emph{i.e.,} $$\one_n=\begin{pmatrix} 1& & \\ & \ddots & \\ & &1\end{pmatrix}\quad\text{ and }\quad\oneanti_n=\begin{pmatrix} & & 1\\ & \iddots & \\ 1 & &\end{pmatrix}.$$
\end{enumerate}

\paragraph{Acknowledgement.} The author is in debt to Patrick Allen for answering the author's many questions about Galois deformation theory as well as pointing out the reference \cite{Newton--Thorne}, which leads to an improvement of this work. To him, the author is grateful.
The author also thanks James Newton for pointing out mistakes in the previous versions of the paper.
The author wishes to thank Arno Kret for his patience in answering the author's questions regarding his work with Shin. Many thanks also go to Giovanni Rosso for his constant help and encouragement. Many of the materials presented here benefited from countless discussions with him. The author is also grateful to him for pointing out mistakes in an early draft and for his help with the French translation of the abstract. 
Many thanks are also owed to Luca Mastella for carefully reading the early draft of this paper and providing helpful suggestions.  
Last but not least, the author thanks the anonymous referee for useful suggestions, which led to the improvement of the exposition of the article. 
\section{Preliminaries}\label{section: preliminaries}

In this section, we recall some preliminaries. In particular, after setting up the notations in \S \ref{subsection: algebraic and p-adic groups} and recalling the Siegel modular varieties in \S \ref{subsection: Siegel variety}, we briefly review the construction of the overconvergent parabolic cohomology groups and the construction of the cuspidal eigenvariety in \S \ref{subsection: cohomology groups} and \S \ref{subsection: Hecke operators and eigenvariety}.

\subsection{Algebraic and \texorpdfstring{$p$}{p}-adic groups}\label{subsection: algebraic and p-adic groups}

Let $\V=\V_{\Z}$ be the free $\Z$-module $\Z^{2g}$ of rank $2g$. By viewing elements in $\V$ as column vectors, we equip $\V$ with the symplectic pairing \begin{equation}\label{eq: symplectic pairing for V}
    \V \times \V \rightarrow \Z, \quad (v, v')\mapsto \trans v\begin{pmatrix} & -\oneanti_g\\ \oneanti_g\end{pmatrix} v'.
\end{equation} The algebraic group $\GSp_{2g}$ (over $\Z$) is then defined to be the group that preserves this symplectic pairing up to units. More precisely, for any ring $R$, \[
    \GSp_{2g}(R) = \left\{\bfgamma\in \GL_{2g}(R): \trans\bfgamma \begin{pmatrix} & -\oneanti_g\\ \oneanti_g\end{pmatrix} \bfgamma = \varsigma(\bfgamma) \begin{pmatrix} & -\oneanti_g\\ \oneanti_g\end{pmatrix} \text{ for some } \varsigma(\bfgamma)\in R^\times\right\}.
\]
Equivalently, for any $\bfgamma=\begin{pmatrix}\bfgamma_a & \bfgamma_b\\ \bfgamma_c & \bfgamma_d\end{pmatrix}\in \GL_{2g}$, $\bfgamma\in \GSp_{2g}$ if and only if
\[
    \trans\bfgamma_a\oneanti_g\bfgamma_c=\trans\bfgamma_c\oneanti_g\bfgamma_a, \quad \trans\bfgamma_b\oneanti_g\bfgamma_d=\trans\bfgamma_d\oneanti_g\bfgamma_b, \text{ and }\trans\bfgamma_a\oneanti_g\bfgamma_d-\trans\bfgamma_c\oneanti_g\bfgamma_b=\varsigma(\bfgamma)\oneanti_g
\] 
for some $\varsigma(\bfgamma)\in \bbG_m$. One can easily check that $\GSp_{2g}$ is stable under transpose. Thus, the above conditions are also equivalent to
\[
    \bfgamma_a\oneanti_g\trans\bfgamma_b=\bfgamma_b\oneanti_g\trans\bfgamma_a, \quad \bfgamma_c\oneanti_g\trans\bfgamma_d=\bfgamma_d\oneanti_g\trans\bfgamma_c,\quad \textrm{and }\bfgamma_a\oneanti_g\trans\bfgamma_d-\bfgamma_b\oneanti_g\trans\bfgamma_c=\varsigma(\bfgamma)\oneanti_g
\]
for some $\varsigma(\bfgamma)\in \bbG_m$.

\vspace{2mm}

We shall be also considering the following algebraic and $p$-adic subgroups of $\GL_g$ and $\GSp_{2g}$: \begin{enumerate}
    \item[$\bullet$] We consider the upper triangular Borel subgroups \begin{align*}
        B_{\GL_g} & := \text{the Borel subgroup of upper triangular matrices in $\GL_g$}\\
        B_{\GSp_{2g}} & := \text{the Borel subgroup of upper triangular matrices in $\GSp_{2g}$.}
    \end{align*} The reason why we are able to consider the upper triangular Borel subgroup for $\GSp_{2g}$ is because of the choice of the pairing in (\ref{eq: symplectic pairing for V}).
    
    \item[$\bullet$] The corresponding unipotent radicals are \begin{align*}
        U_{\GL_g} & := \text{ the upper triangular $g\times g$ matrices whose diagonal entries are all $1$}\\
        U_{\GSp_{2g}} & := \text{ the upper triangular $2g\times 2g$ matrices in $\GSp_{2g}$ whose diagonal entries are all $1$}.
    \end{align*} Consequently, the maximal tori for both algebraic groups are the tori of diagonal matrices. The Levi decomposition then yields \[
        B_{\GL_g}=U_{\GL_g}T_{\GL_g}\text{ and }B_{\GSp_{2g}}=U_{\GSp_{2g}}T_{\GSp_{2g}}.
    \]
    Moreover, we denote by $U_{\GL_g}^{\opp}$ and $U_{\GSp_{2g}}^{\opp}$ the opposite unipotent radical of $U_{\GL_g}$ and $U_{\GSp_{2g}}$ respectively.
    
    \item[$\bullet$] To simplify the notation, for any $s\in \Z_{\geq 0}$, we write \begin{align*}
        T_{\GL_g, s} & := \left\{\begin{array}{ll}
            T_{\GL_g}(\Z_p), & s=0 \\
            \ker(T_{\GL_g}(\Z_p)\rightarrow T_{\GL_g}(\Z/p^s\Z)), & s>0
        \end{array}\right.\\
        T_{\GSp_{2g}, s} & := \left\{\begin{array}{ll}
            T_{\GSp_{2g}}(\Z_p), & s=0 \\
            \ker(T_{\GSp_{2g}}(\Z_p)\rightarrow T_{\GSp_{2g}}(\Z/p^s\Z)), & s>0
        \end{array}\right.\\
        U_{\GL_g, s} & := \left\{\begin{array}{ll}
            U_{\GL_g}(\Z_p), & s=0 \\
            \ker(U_{\GL_g}(\Z_p)\rightarrow U_{\GL_g}(\Z/p^s\Z)), & s>0
        \end{array}\right.\\
        U_{\GSp_{2g}, s} & := \left\{\begin{array}{ll}
            U_{\GSp_{2g}}(\Z_p), & s=0 \\
            \ker(U_{\GSp_{2g}}(\Z_p)\rightarrow U_{\GSp_{2g}}(\Z/p^s\Z)), & s>0
        \end{array}\right. .
    \end{align*} The above maps are all reduction maps. 
    
    \item[$\bullet$] The Iwahori subgroups are \begin{align*}
        \Iw_{\GL_g} & : = \text{ the preimage of $B_{\GL_g}(\F_p)$ under the reduction map $\GL_g(\Z_p)\rightarrow \GL_g(\F_p)$}\\
        \Iw_{\GSp_{2g}} & := \text{ the preimage of $B_{\GSp_{2g}}(\F_p)$ under the reduction map $\GSp_{2g}(\Z_p)\rightarrow \GSp_{2g}(\F_p)$}.
    \end{align*} We have Iwahori decompositions for $\Iw_{\GL_g}$ and $\Iw_{\GSp_{2g}}$ 
    \[
    \Iw_{\GL_g} = U_{\GL_g, 1}^{\opp} T_{\GL_g, 0}U_{\GL_g, 0} \quad \text{ and }\quad \Iw_{\GSp_{2g}} = U_{\GSp_{2g}, 1}^{\opp} T_{\GSp_{2g}, 0} U_{\GSp_{2g}, 0}.
    \]
\end{enumerate}

For later purposes, we also recall the Weyl groups of $\GSp_{2g}$ and $H:=\GL_g\times \bbG_m$ from \cite[Chapter VI, \S 5]{Faltings-Chai}. Here, we view $H$ as an algebraic subgroup of $\GSp_{2g}$ via the embedding \[
    H = \GL_g \times \bbG_m \hookrightarrow \GSp_{2g}, \quad (\bfgamma, \bfupsilon)\mapsto \begin{pmatrix} \bfgamma & \\ & \bfupsilon \oneanti_g \trans\bfgamma^{-1}\oneanti_g\end{pmatrix}.
\]

Consider the character group $\bbX = \Hom(T_{\GSp_{2g}}, \bbG_m)$. We have the following isomorphism \[
    \Z^{g+1} \xrightarrow{\sim} \bbX, \quad (k_1, ..., k_g; k_0)\mapsto \left(\diag(\bftau_1, ..., \bftau_g, \bftau_0\bftau_g^{-1}, ..., \bftau_0\bftau_1^{-1})\mapsto \prod_{i=0}^{g} \bftau_i^{k_i}\right).
\] Let $x_1, ..., x_g, x_0$ be the basis of $\bbX$ that corresponds to the standard basis on $\Z^{g+1}$. Note that $\bbX$ can also be viewed as the character group of the maximal torus $T_H=T_{\GL_g}\times \bbG_m$ of $H$ via the isomorphisms $T_{\GSp_{2g}}\simeq \bbG_m^{g+1}\simeq T_{\GL_g}\times \bbG_m = T_H$.

\vspace{2mm}

Under the above choices of the maximal tori, we can describe the root systems of $\GSp_{2g}$ and $H$ explicitly \begin{align*}
    \Phi_{\GSp_{2g}} & = \{\pm(x_i-x_j),\,\, \pm(x_i+x_j-x_0),\,\, \pm(2x_t-x_0): 1\leq i<j\leq g, 1\leq t\leq g\}\\
    \Phi_H & = \{\pm(x_i-x_j),\,\, \pm x_g,\,\, \pm x_0: 1\leq i<j\leq g\}.
\end{align*} Moreover, the choices of the Borel subgroups yield the description of the positive roots \begin{align*}
    \Phi_{\GSp_{2g}}^+ & = \{x_i-x_j,\,\, x_i+x_j-x_0,\,\, 2x_t-x_0: 1\leq i<j\leq g, 1\leq t\leq g\}\\
    \Phi_H^+ & = \{x_i-x_j: 1\leq i<j\leq g\}(=\Phi_H\cap \Phi_{\GSp_{2g}}^+).
\end{align*}

The Weyl groups of $\GSp_{2g}$ and $H$ are defined as \[
    \Weyl_{\GSp_{2g}} := N_{\GSp_{2g}}(T_{\GSp_{2g}})/T_{\GSp_{2g}} \quad \text{ and }\quad \Weyl_H := N_H(T_H)/T_H,
\]
where $N_{\GSp_{2g}}(T_{\GSp_{2g}})$ (resp. $N_{H}(T_H)$) is the group of normalisers of $T_{\GSp_{2g}}$ (resp. $T_H$) in $\GSp_{2g}$ (resp. $H$). They can also be described explicitly as follows. \begin{enumerate}
    \item[$\bullet$] We can identify $\Weyl_{\GSp_{2g}}$ with $\bfSigma_g \ltimes (\Z/2\Z)^g$, where $\bfSigma_g$ denotes the permutation group on $g$ letters. For any $\bftau = \diag(\bftau_1, ..., \bftau_g, \bftau_0\bftau_g^{-1}, ..., \bftau_0\bftau_1^{-1})\in T_{\GSp_{2g}}$, the actions of $\bfSigma_g$ and $(\Z/2\Z)^{g}$ are given as \begin{enumerate}
        \item[(i)] $\bfSigma_g$ permutes $\bftau_1, ..., \bftau_g$,
        \item[(ii)] the element $(\underbrace{0, ..., 0}_{i-1}, 1, 0, ..., 0)\in (\Z/2\Z)^{g}$ maps $\bftau$ to \[
            \diag(\bftau_1, ..., \bftau_{i-1}, \bftau_0\bftau_i^{-1}, \bftau_{i+1}, ...,\bftau_g, \bftau_0\bftau_g^{-1}, ..., \bftau_0\bftau_{i+1}^{-1}, \bftau_i, \bftau_0\bftau_{i-1}^{-1}, ..., \bftau_0\bftau_{1}^{-1}).
        \]
    \end{enumerate}
    \item[$\bullet$] We can identify $\Weyl_H$ with $\bfSigma_g$, whose action on $T_H$ is defined as the action of $\bfSigma_g$ on $T_{\GSp_{2g}}$.
\end{enumerate} The actions of the Weyl groups on the maximal tori then induce actions on the root systems $\Phi_{\GSp_{2g}}$ and $\Phi_H$. Following \cite[Chapter VI, \S 5]{Faltings-Chai}, let \[
    \Weyl^H : = \{ x\in \Weyl_{\GSp_{2g}}: x(\Phi_{\GSp_{2g}}^+)\supset \Phi_H^+\} \subset \Weyl_{\GSp_{2g}}. 
\] The subset $\Weyl^H$ consequently gives a system of representatives of the quotient $\Weyl_H\backslash \Weyl_{\GSp_{2g}}$.

\subsection{The Siegel modular varieties}\label{subsection: Siegel variety} 
Let $\Gamma^{(p)} \subset \GSp_{2g}(\widehat{\Z})$ be a neat open compact subgroup such that $\Gamma^{(p)} = \prod_{\ell: \text{ prime}} \Gamma^{(p)}_{\ell}$, where each $\Gamma^{(p)}_{\ell}$ is an open compact subgroup of $\GSp_{2g}(\Z_{\ell})$. Let $\Sbad := \{\ell \text{ prime number}: \Gamma_{\ell}^{(p)} \subsetneq \GSp_{2g}(\Z_{\ell})\} \cup \{p\}$. We shall assume this union is a disjoint union and write $N:= \prod_{\ell\in \Sbad \smallsetminus \{p\}}\ell$ ( and so $p\nmid N$).

\vspace{2mm}

Fix a primitive $N$-th roots of unity $\zeta_N\in \overline{\Q}\subset \overline{\Q}_p$. Let $\Sch_{\Z_p[\zeta_N]}$ be the category of locally noetherian schemes over $\Z_p[\zeta_N]$. Consider the functor \[
    \Sch_{\Z_p[\zeta_N]} \rightarrow \Sets, \quad S\mapsto \left\{(A_{/S}, \lambda, \psi_N): \begin{array}{l}
        A \text{ is a principally polarised abelian scheme over }S   \\
        \lambda \text{ is a principal polarisation on }A\\
        \psi_N\text{ is a level structure defined by }\Gamma^{(p)}
    \end{array}\right\}/\simeq .
\] 
Assume that $\Gamma^{(p)}$ is chosen so that the above functor is representable by a scheme $X_{\Z_p[\zeta_N]}$. Denote by $X=X_{\C_p}$ the base change of $X_{\Z_p[\zeta_N]}$ to $\C_p$.

\begin{Example}\label{Example: tame level}
\normalfont Suppose $\Gamma^{(p)} = \Gamma(N) := \ker(\GSp_{2g}(\widehat{\Z})\rightarrow \GSp_{2g}(\Z/N\Z))$ for $N$ large enough, then $\Gamma(N)$ defines the level structure asking for symplectic isomorphisms,   \[
    \psi_N: A[N] \xrightarrow{\sim} (\Z/N\Z)^{2g},
\]\emph{i.e.}, isomorphisms that preserve symplectic pairings on both sides up to units, where we consider the  Weil pairing on the left-hand side and the symplectic pairing induced by (\ref{eq: symplectic pairing for V}) on the right-hand side. \qed
\end{Example}

Fix a primitive $p$-th root of unity $\zeta_p\in \overline{\Q}\subset \overline{\Q}_p$, we also consider the scheme $X_{\Iw, \Q_p[\zeta_N, \zeta_p]}$, parametrising tuples $$(A, \lambda, \psi_N, \Fil_{\bullet}),$$ where $(A, \lambda, \psi_N)\in X_{\Q_p[\zeta_N, \zeta_p]}:=X_{\Z_p[\zeta_N]}\times_{\Z_p[\zeta_N]}\Spec \Q_p[\zeta_p, \zeta_N]$ and $\Fil_{\bullet}$ is a full filtration of $A[p]$ such that $\Fil_{\bullet}^{\perp} = \Fil_{2g-\bullet}$ (with respect to the Weil pairing).  Similarly, we write $X_{\Iw}=X_{\Iw, \C_p}$ the base change of $X_{\Iw, \Q_p[\zeta_N, \zeta_p]}$ to $\C_p$. Obviously, we have the natural forgetful morphism \[
    X_{\Iw} \rightarrow X, \quad (A, \lambda, \psi_N, \Fil_{\bullet}) \mapsto (A, \lambda, \psi_N).
\] This morphism is obviously an \'etale morphism.

\vspace{2mm}

With respect to the fixed isomorphism $\C\simeq \C_p$ in the beginning, we can consider the $\C$-points $X_{\Iw}(\C)$. The space $X_{\Iw}(\C)$ can then be identified with the locally symmetric space \[
    X_{\Iw}(\C) = \GSp_{2g}(\Q)\backslash \GSp_{2g}(\A_f)\times \bbH_g/\Iw_{\GSp_{2g}}\Gamma^{(p)},
\]
where $\bbH_g$ is the (disjoint) union of the Siegel upper- and lower-half plane and $\A_f$ is the ring of finite ad\`eles of $\Q$. 
It is well-known that the dimension of the Siegel modular variety $X_{\Iw}$ (as well as $X$) is $n_0= g(g+1)/2$.

\vspace{2mm}

Let $M$ be a left $\GSp_{2g}(\Z_p)$-module (over some commutative ring). Thus, $M$ also admits a left action of $\Iw_{\GSp_{2g}}$ via restriction. By equipping $M$ with a trivial action by $\Gamma^{(p)}$, the module $M$ naturally defines a local system on $X(\C)$ and  $X_{\Iw}(\C)$ as explained in \cite[\S 2.2]{Ash-Stevens} (see also \cite[\S 2.1]{Hansen-PhD}). One can then consider the (Betti) cohomology groups $H^t(X(\C), M)$ and  $H^t(X_{\Iw}(\C), M)$ (resp. compactly supported cohomology groups $H^t_c(X(\C), M)$ and $H^t_c(X_{\Iw}(\C), M)$) with coefficients in $M$. Then, there are natural morphisms \begin{align}
    \Lambda_p: H^t(X(\C), M) & \rightarrow H^t(X_{\Iw}(\C), M) \label{eq: p-old morphism}\\
    \Lambda_p: H_c^t(X(\C), M) & \rightarrow H_c^t(X_{\Iw}(\C), M) \label{eq: p-old morphism for compactly supported cohomology}
\end{align} 
induced by the forgetful morphism.

\subsection{The overconvergent parabolic cohomology groups}\label{subsection: cohomology groups}
Define \[
    \T_0 = \left\{(\bfgamma, \bfupsilon)\in \Iw_{\GL_g} \times M_g(p\Z_p) : \trans\bfgamma\oneanti_g\bfupsilon= \trans\bfupsilon\oneanti_g\bfgamma\right\}.
\] Elements in $\T_0$ can be viewed as the left $(2g\times g)$-columns of matrices in $\Iw_{\GSp_{2g}}$ as explained in \cite[\S 2.2]{Wu-pairing}. Then $\T_0$ admits a right action of $B_{\GL_g, 0} $ given by the right multiplication and a left action of $\Xi : = \begin{pmatrix} \Iw_{\GL_g} & M_g(\Z_p)\\ M_g(p\Z_p) & M_g(\Z_p)\end{pmatrix}\cap \GSp_{2g}(\Q_p)$ by the \textit{left multiplication}. Moreover, $\T_0$ admits a special subset \[
    \T_{00} : = \{(\bfgamma, \bfupsilon)\in \T_0: \bfgamma\in U_{\GL_g, 1}^{\opp}\},
\] which can be identified with $U_{\GSp_{2g}, 1}^{\opp}$ via \[
    \T_{00} \xrightarrow{\sim} U_{\GSp_{2g}, 1}^{\opp}, \quad (\bfgamma, \bfupsilon)\mapsto \begin{pmatrix} \bfgamma & \\ \bfupsilon & \oneanti_g \trans\bfgamma^{-1}\oneanti_g\end{pmatrix}.
\]

For any affinoid $\Q_p$-algebra $R$ and any $p$-adic weight (\emph{i.e.}, continuous character) \(
    \kappa: T_{\GL_g, 0} \rightarrow R^\times
\) and any $s\in \Z_{>0}$, we consider the $s$-locally analytic functions \[
    A_{\kappa}^s(\T_0, R) := \left\{\phi: \T_0\rightarrow R : \begin{array}{l}
        \phi(\bfgamma\bfbeta, \bfupsilon\bfbeta) = \kappa(\bfbeta)\phi(\bfgamma, \bfupsilon)\,\,\forall ((\bfgamma, \bfupsilon), \bfbeta)\in \T_0\times B_{\GL_g, 0}  \\
        \phi|_{\T_{00}} \text{ is $s$-locally analytic}
    \end{array}\right\}.
\] Here, we extend $\kappa$ to a function on $B_{\GL_g, 0}$ by setting $\kappa|_{U_{\GL_g, 0}} = 1$ and the `$s$-locally analytic' condition is in the sense of \cite[\S 2 Definition]{Hansen-PhD} (after identifying $\T_{00}$ with $U_{\GSp_{2g}, 1}^{\opp}$). One sees immediately that we have a natural inclusion $A_{\kappa}^s(\T_0, R)\subset A_{\kappa}^{s+1}(\T_0, R)$.

\vspace{2mm}

The $s$-locally analytic distributions are then defined to be \[
    D_{\kappa}^s(\T_0, R) : = \Hom_{R}^{\cts}(A_{\kappa}^{s}(\T_0, R), R).
\] The natural inclusion $A_{\kappa}^s(\T_0, R)\subset A_{\kappa}^{s+1}(\T_0, R)$ then yields a natural projection $D_{\kappa}^{s+1}(\T_0, R)\rightarrow D_{\kappa}^s(\T_0, R)$. Consequently, we define \begin{align*}
    & A_{\kappa}^{\dagger}(\T_0, R) := \varinjlim_{s} A_{\kappa}^s(\T_0, R)\\
    & D_{\kappa}^{\dagger} (\T_0, R) := \varprojlim_{s} D_{\kappa}^s(\T_0, R).
\end{align*} We call elements of these two modules \textbf{\textit{overconvergent functions}} and \textbf{\textit{overconvergent distributions}} respectively. It is also obvious that $D_{\kappa}^{\dagger}(\T_0, R)$ is the continuous dual of $A_{\kappa}^{\dagger}(\T_0, R)$.

\vspace{2mm}

Observe that $\Iw_{\GSp_{2g}}\subset \Xi$, thus $D_{\kappa}^{\dagger}(\T_0, R)$ is naturally a left $\Iw_{\GSp_{2g}}$-module. Equip it with a trivial action by $\Gamma^{(p)}$, we can consequently consider the cohomology groups (resp. compactly supported cohomology groups) $H^t(X_{\Iw}(\C), D_{\kappa}^{\dagger}(\T_0, R))$ (resp. $H_c^t(X_{\Iw}(\C), D_{\kappa}^{\dagger}(\T_0, R))$) for any $0\leq t\leq 2n_0$. The \textbf{\textit{overconvergent parabolic cohomology group}} is then defined to be \[
    H_{\Par}^t(X_{\Iw}(\C), D_{\kappa}^{\dagger}(\T_0, R)):= \image\left(H_c^t(X_{\Iw}(\C), D_{\kappa}^{\dagger}(\T_0, R)) \rightarrow H^t(X_{\Iw}(\C), D_{\kappa}^{\dagger}(\T_0, R))\right),
\] where the map is the natural map from the compactly supported cohomology group to the cohomology group. In what follows, we will be considering the total overconvergent parabolic cohomology group \[
    H_{\Par, \kappa}^{\tol} := \oplus_{t=0}^{2n_0} H_{\Par}^t(X_{\Iw}(\C), D_{\kappa}^{\dagger}(\T_0, R)).
\]

\subsection{Hecke operators and the (reduced equidimensional) cuspidal eigenvariety}\label{subsection: Hecke operators and eigenvariety}
Let $\ell$ be a prime number that does not divide $pN$. We consider the set of double cosets \[\Upsilon_{\ell}:=\{[\GSp_{2g}(\Z_{\ell})\bfdelta\GSp_{2g}(\Z_{\ell})]: \bfdelta\in \GSp_{2g}(\Q_q)\cap M_{2g}(\Z_q)\}.
\]
For any fixed $\bfdelta$, we have the coset decomposition \[\GSp_{2g}(\Z_{\ell})\bfdelta\GSp_{2g}(\Z_{\ell})=\sqcup_{j} \bfdelta_j\GSp_{2g}(\Z_{\ell})
\] for finitely many $\bfdelta_j\in \GSp_{2g}(\Q_{\ell})\cap M_{2g}(\Z_{\ell})$. By letting $\bfdelta_j$'s act trivially on $D_{\kappa}^{\dagger}(\T_0, R)$, we have a left action of the double coset $[\GSp_{2g}(\Z_{\ell})\bfdelta\GSp_{2g}(\Z_{\ell})]$ on the cochain complex $C^{\bullet}_{\kappa}$ (resp. $C_{c, \kappa}^{\bullet}$) that computes the cohomology groups $H^{t}(X_{\Iw}(\C), D_{\kappa}^{\dagger}(\T_0, R))$ (resp. the compactly supported cohomology groups $H_c^t(X_{\Iw}(\C), D_{\kappa}^{\dagger}(\T_0, R))$) by $$[\GSp_{2g}(\Z_q)\bfdelta\GSp_{2g}(\Z_q)]\cdot \sigma = \sum_{j}\bfdelta_j\cdot \sigma$$ for any $\sigma\in C^{\bullet}_{\kappa}$ (resp. $C_{c, \kappa}^{\bullet}$). Then the Hecke algebra at $\ell$ (over $\Z_{p}$) is defined to be $\bbT_{\ell}=\bbT_{{\ell}, \Z_p}=\Z_p[\Upsilon_{\ell}]$. Consequently, the \textbf{\textit{unramified Hecke algebra}} is \[
    \bbT^p:= \otimes_{\ell\nmid pN}\bbT_{\ell}.
\]

We specify out a special element $\bft_{\ell, 0} = \diag(\one_g, \ell\one_g)\in \GSp_{2g}(\Q_{\ell})\cap M_{2g}(\Z_{\ell})$. For any $x\in \Weyl_{\GSp_{2g}}$, denote by $T_{\ell, 0}^x$ the Hecke operator defined by the double coset $[\GSp_{2g}(\Z_{\ell}) (x\cdot\bft_{\ell, 0}) \GSp_{2g}(\Z_{\ell})]$. Following \cite[\S 3]{Genestier-Tilouine}, we define the \textit{\textbf{Hecke polynomial at $\ell$}} to be \begin{equation}\label{eq: Hecke poly at l}
    P_{\Hecke, \ell}(Y) : = \prod_{x\in \Weyl^H}(Y-T_{\ell, 0}^x)\in \bbT_{\ell}[Y].
\end{equation} One sees immediately that this is a polynomial of degree $2^g$.

\vspace{2mm}

For Hecke operators at $p$, consider matrices
\[
    \bfu_{p, i} : = \left\{\begin{array}{ll}
        \begin{pmatrix}\one_g\\ & p\one_g\end{pmatrix}, & i=0 \\ \\
        \begin{pmatrix} \one_{g-i} \\ & p\one_{i}\\ & & p\one_{i}\\ & & & p^2\one_{g-i}\end{pmatrix}, & 1\leq i\leq g-1
    \end{array}\right. .
\] For any $(\bfgamma, \bfupsilon)\in \T_0$, write $(\bfgamma, \bfupsilon) = (\bfgamma_0, \bfupsilon_0)\bfbeta$ for some $\bfbeta\in B_{\GL_g, 0}^+$ such that $\bfgamma_0\in U_{\GL_g, 1}^{\opp}$. Then, the left action of $\bfu_{p,i}$ on $\T_0$ is defined by the formula \[
    \bfu_{p,i}\cdot (\bfgamma, \bfupsilon) = (\bfu_{p, i}^{\square}\bfgamma_0\bfu_{p, i}^{\square, -1}, \bfu_{p,i}^{\blacksquare}\bfupsilon_{0}\bfu_{p, i}^{\square, -1})\bfbeta,
\] where we write 
\[
    \bfu_{p, i} = \begin{pmatrix}\bfu_{p, i}^{\square} & \\ & \bfu_{p,i}^{\blacksquare}\end{pmatrix}.
\]
On the other hand, we also have a coset decomposition of $\Iw_{\GSp_{2g}}\bfu_{p,i}\Iw_{\GSp_{2g}}$, given by 
\[
\Iw_{\GSp_{2g}}\bfu_{p, i}\Iw_{\GSp_{2g}}=\sqcup_{j}\bfdelta_{i, j}\Iw_{\GSp_{2g}}
\] 
for some $\bfdelta_{i, j}\in \GSp_{2g}(\Q_p)\cap M_{2g}(\Z_p)$; in particular, $\bfdelta_{i,j} = \bflambda_{i,j}\bfu_{p, i}$ for some $\bflambda_{i,j}\in \Iw_{\GSp_{2g}}$. Hence, we have the action
\[
    [\Iw_{\GSp_{2g}} \bfu_{p,i} \Iw_{\GSp_{2g}}] \cdot \sigma : = \sum_{j}\bfdelta_{i,j}\cdot \sigma = \sum_{j} \bflambda_{i,j}\cdot \left(\bfu_{p, i}\cdot \sigma\right)
\]
for any $\sigma\in C^{\bullet}_{\kappa}$ (resp. $C_{c, \kappa}^{\bullet}$). We denote by $U_{p,i}$ the Hecke operator defined by the double coset $[\Iw_{\GSp_{2g}} \bfu_{p,i} \Iw_{\GSp_{2g}}]$. Similarly, for any $x\in \Weyl_{\GSp_{2g}}$, we denote by $U_{p,i}^{x}$ the Hecke operator defined by the double coset $[\Iw_{\GSp_{2g}} (x\cdot \bfu_{p, i}) \Iw_{\GSp_{2g}}]$, whose action is similarly defined as above. Then, the Hecke algebra at $p$ is defined to be $\bbT_{p} = \bbT_{p, \Z_p} = \Z_p[U_{p,i}^x: i=0, 1, ..., g-1, w\in \Weyl_{\GSp_{2g}}]$. Consequently, the \textit{\textbf{(universal) Hecke algebra}} is defined to be\[
    \bbT := \bbT^p\otimes_{\Z_p} \bbT_p.
\]

\vspace{2mm}

There is a special Hecke operator $U_p\in \bbT_p$ defined to be \[
    U_p : = \prod_{i=0}^{g-1}U_{p, i}.
\] Combining the discussions in \cite[\S 2.2]{Hansen-PhD} and \cite[\S 3.2]{Johansson-Newton},\footnote{ Let us explain this implication in more details. In \cite{Hansen-PhD} the operator $U_p$ acts compactly on the chain complexes that computes the homology groups with coefficients in $A_{\kappa}^s(\T_0, R)$ for any $s\in \Z_{>0}$. On the other hand, the authors of \cite{Johansson-Newton} used a different formalism that allows them to deduce the compactness of the operator $U_p$ on the cochain complexes that compute cohomology groups with coefficients in `$\calD_{\kappa}^r$'. The modules $\calD_{\kappa}^r$ are obtained by considering the completion on $R\llbrack U_{\GSp_{2g}, 1}^{\opp}\rrbrack$ with respect to an `$r$-norm'. Such a module is not the module of $s$-locally analytic distributions considered in \cite{Hansen-PhD} and here. However, this difference disappears after taking limit, \emph{i.e.}, $\varprojlim_{r}\calD_{\kappa}^r = D_{\kappa}^{\dagger}(\T_0, R)$. We should also caution the reader that the $p$-adic weight $\kappa$ and $R$ considered in \cite{Johansson-Newton} are well-chosen so that their formalism could be applied. We omitted this subtlety in the above discussion just to provide an idea.} the operator $U_p$ defines a compact operator on $C_{\kappa}^{\bullet}$ (resp., $C_{c, \kappa}^{\bullet}$). Consequently, we consider the slope decomposition on $C_{\kappa}^{\bullet}$ (resp., $C_{c, \kappa}^{\bullet}$) with respect to the action of $U_p$, which allows us to consider the finite slope cohomology groups (resp., compactly supported cohomology groups).

\vspace{2mm}

%Consequently, the (universal) Hecke algebra in our consideration is the $\Z_p$-algebra \[    \bbT := \bbT_p \otimes_{\Z_p} \bbT^p.\] In what follows, for any left $\Iw_{\GSp_{2g}}^+$-module $M$ that is equipped with an action of $\bbT$, we say a cohomology class in $H^t(X_{\Iw^+}, M)$ is a \textbf{\textit{Hecke-eigen class}} if and only if it is a simultaneous eigenvector of the Hecke algebra $\bbT$. \vspace{1mm}

Let $\calW = \Spa(\Z_p\llbrack T_{\GL_g,0}\rrbrack, \Z_p\llbrack T_{\GL_g, 0}\rrbrack)_{\eta}^{\an}$ be our weight space, where the superscript `$\bullet^{\an}$' means that we are taking the analytic points of the adic space and the subscript `$\bullet_\eta$' means that we are considering the generic fibre of the adic space. The slope decomposition on the cochain complexes $C_{\kappa}^{\bullet}$ then defines a Fredholm surface $\calZ$ over $\calW$. As the natural map $C_{c, \kappa}^{\bullet} \rightarrow C_{\kappa}^{\bullet}$ is Hecke-equivariant, the finite-slope cohomology groups and finite-slope compactly supported cohomology groups define finite-slope parabolic cohomology groups $H_{\Par, \kappa}^{\tol, <h}$ (see \cite[\S 3.3]{Wu-pairing}).

\vspace{2mm}

For any slope datum $(\calU, h)$ (see \cite[\S 3.1]{Hansen-PhD}; in particular, $\calU\subset \calW$), denote by $\kappa_{\calU}$ the universal weight on $\calU$ and define 
\[
    \bbT_{\Par, \calU}^{\red, h}:= \image\left(\bbT\rightarrow \End_{\scrO_{\calW}(\calU)}\left(H_{\Par, \kappa_{\calU}}^{\tol, \leq h}\right)\right)^{\red},
\] where the superscript `$\bullet^{\red}$' stands for the maximal reduced quotient of the corresponding ring. The algebras $\bbT_{\Par, \calU}^{\red, h}$ then glue together to a coherent sheaf of $\scrO_{\calZ}$-algebras, denoted $\scrT_{\Par}^{\red}$. The reduced cuspidal eigenvariety is then defined to be \[
    \calE_0^{\red} := \Spa_{\calZ}(\scrT_{\Par}^{\red}, \scrT_{\Par}^{\red, \circ}),
\]
where the sheaf of integral elements $\scrT_{\Par}^{\red, \circ}$ is guaranteed by \cite[Lemma A.3]{Johansson-Newton}. We finally define the \textbf{\textit{(reduced equidimensional) cuspidal eigenvariety}}
\[
    \calE_0 := \text{the equidimensional locus of }\calE_0^{\red}
\]The natural map \[
    \wt: \calE_0 \rightarrow \calW
\]
is called the \textbf{\textit{weight map}}. 

\begin{Remark}\label{Remark: reduced cuspidal eigenvariety}
\normalfont If we work with the \emph{strict} Iwahori level as in \cite{Wu-pairing}, then $\calE_0$ is the reduced and equidimensional part of the $p\neq 0$ locus of the cuspidal eigenvariety considered in \emph{loc. cit.}. We focus on the reduced cuspidal eigenvariety due to later purposes on families of Galois representations. 
\end{Remark}
\section{Families of Galois representations}\label{section: Galois representations}
In this section, we study families of Galois representations on the reduced equidimensional cuspidal eigenvariety $\calE_0$. We shall first recall several formalisms about families of Galois representations from \cite{Bellaiche-Chenevier-book}. Our main results concerning the Bloch--Kato conjecture are then proven in \S \ref{subsection: Bloch--Kato main results}.

\subsection{Determinants and families of representations}\label{subsection: pseudocharacters and families}
In this subsection, we recall several terminologies for studying families of Galois representations. Most of the materials presented in this subsection are taken from \cite{Bellaiche-Chenevier-book}.

\paragraph{Determinants.} We briefly recall the notion of `\textit{determinants}' from \cite{Chenevier-2014} and refer the readers to \textit{loc. cit.} for more detailed discussions. We remark in the beginning that the notion of determinants is used to strengthen the notion of `\textit{pseudocharacters}' first introduced by R. Taylor in \cite{Taylor-1991} and studied by other mathematicians. We also remark that determinants are equivalent to pseudocharacters in characteristic $0$.

\begin{Definition}\label{Definition: determinants}
Let $A$ be a commutative ring and $R$ be an $A$-algebra (not necessarily commutative). \begin{enumerate}
    \item[(i)] For any $A$-module $M$, one can view $M$ as a functor from the category of commutative $A$-algebras to the category of sets, sending $B$ to $M\otimes_A B$. Let $M$, $N$ be two $A$-modules. Then an \textbf{$A$-polynomial law} between $M$ and $N$ is a natural transformation \[
        M\otimes_A B \rightarrow N\otimes_A B
    \] on the category of commutative $A$-algebras. 
    
    \item[(ii)] Let $P: M \rightarrow N$ be an $A$-polynomial law and $d\in \Z_{>0}$. We say $P$ is \textbf{homogeneous of dimension $d$} if for any commutative $A$-algebra $B$, any $b\in B$ and any $x\in M\otimes_A B$, we have $P(bx) = b^d P(x)$.
    
    \item[(iii)] Let $P: R\rightarrow A$ be an $A$-polynomial law. We say $P$ is \textbf{multiplicative} if, for any commutative $A$-algebra $B$, $P(1) = 1$ and $P(xy) = P(x)P(y)$ for any $x, y\in R\otimes_A B$.
    
    \item[(iv)] For $d\in \Z_{>0}$, a \textbf{$d$-dimensional $A$-valued determinant on $R$} is a multiplicative $A$-polynomial law $D: R\rightarrow A$ which is homogeneous of dimension $d$. 
\end{enumerate}
\end{Definition}

\begin{Example}\label{Example: determinants}
\normalfont Let $G$ be a group and $A$ be any ring. Let $\rho: G \rightarrow \GL_d(A)$ be a representation of dimension $d$. Then \[
    D: A[G] \rightarrow A, \quad G\ni\sigma \mapsto \det\rho(\sigma)
\] is an $A$-valued determinant of dimension $d$ on $A[G]$. We also say that $D$ is an $A$-valued determinant of dimension $d$ on $G$.\qed
\end{Example}

\begin{Theorem}[$\text{\cite[Theorem A \& Theorem B]{Chenevier-2014}}$]\label{Theorem: determinants and representations} Let $G$ be a group.
\begin{enumerate}
    \item[(i)] Let $k$ be an algebraically closed field and let $D: k[G] \rightarrow k$ be a determinant of dimension $d$. Then, there exists a unique (up to isomorphism) semisimple representation $\rho: G \rightarrow \GL_d(k)$ such that for any $\sigma\in G$, we have \[
        \det(1+X\rho(\sigma)) = D(1+X\sigma) \in k[X].
    \] In particular, $\det\rho = D$.
    
    \item[(ii)] Let $A$ be an henselian local ring with algebraically closed residue field $k$, $D: A[G]\rightarrow A$ be a determinant of dimension $d$ and let $\rho$ be the semisimple representation attached to $D\otimes_A k$ in (i). Suppose $\rho$ is irreducible, then there exists a unique (up to isomorphism) representation $\widetilde{\rho}: G \rightarrow \GL_d(A)$ such that \[
        \det(1+X\widetilde{\rho}(\sigma)) = D(1+X\sigma) \in A[X]
    \] for any $\sigma\in G$.
\end{enumerate}
\end{Theorem}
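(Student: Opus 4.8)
The plan is to obtain both parts from the structure theory of Cayley--Hamilton algebras, in the spirit of \cite{Chenevier-2014}, which refines Procesi's invariant theory of the Cayley--Hamilton identity.

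For (i), the first step is to promote the determinant to a characteristic-polynomial datum: extending the polynomial law $D$ along $k[G]\hookrightarrow k[G][X]$ gives, for each $a\in k[G]$, a monic polynomial $\chi_a(X):=D(X\cdot 1-a)\in k[X]$ of degree $d$, and the assignment $a\mapsto D(1+Xa)$ recovers, after the evident reciprocal-polynomial manipulation, the expression appearing in the statement. Let $\calI(D)\subset k[G]$ be the Cayley--Hamilton ideal, that is, the smallest two-sided ideal such that on $R:=k[G]/\calI(D)$, equipped with the induced determinant $\overline D$, one has $\chi_a(a)=0$ for every $a\in R\otimes_k B$ and every commutative $k$-algebra $B$. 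The key input is that over an algebraically closed field a Cayley--Hamilton algebra of degree $d$ surjects, compatibly with its determinant, onto a finite product $\prod_i M_{d_i}(k)$ through which $\overline D$ factors as $x\mapsto\prod_i\det(x_i)^{m_i}$ with $\sum_i m_i d_i=d$ (after discarding a radical part that the determinant does not see). Composing $k[G]\to R\to\prod_i M_{d_i}(k)$ with the standard representation of each $M_{d_i}(k)$, taken with multiplicity $m_i$, produces a semisimple $\rho\colon G\to\GL_d(k)$ with $\det(1+X\rho(\sigma))=D(1+X\sigma)$. Uniqueness is a Brauer--Nesbitt argument phrased with characteristic polynomials rather than traces: two semisimple $d$-dimensional representations with $\det(1+X\rho(\sigma))=\det(1+X\rho'(\sigma))$ for every $\sigma\in G$ have the same characteristic polynomial on every element of $k[G]$, hence the same Jordan--H\"older constituents with multiplicity, hence are isomorphic --- this is exactly the gain of determinants over the older notion of pseudocharacter, which would require a restriction on the characteristic of $k$.

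For (ii), the irreducibility of the residual representation $\rho$ attached to $D\otimes_A k$ makes the residual Cayley--Hamilton quotient as rigid as possible. Form $R:=A[G]/\calI(D)$ as before; since the formation of $\calI(D)$ is compatible with the base change $A\to k$ to the residue field and $\rho$ is irreducible of dimension $d$, one gets $R\otimes_A k\cong M_d(k)$. The crux is then to upgrade this to: $R$ is an Azumaya $A$-algebra of rank $d^2$. One lifts a full system of matrix units of $M_d(k)$ to $R$ using that idempotents lift along the henselian local ring $A$; this gives a map $M_d(A)\to R$ which is an isomorphism by Nakayama once $R$ is seen to be finite projective over $A$, and the Cayley--Hamilton degree-$d$ condition forces the rank to be $d^2$ at every point of $\Spec A$. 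Since the Brauer group of a henselian local ring with algebraically closed residue field vanishes, $R\cong M_d(A)$, and the composite $G\to A[G]\to R\cong M_d(A)$ is the desired lift $\widetilde\rho$; the identity $\det(1+X\widetilde\rho(\sigma))=D(1+X\sigma)$ holds because $D$ factors through $R$ by construction, and reduction to the residue field recovers $\rho$. For uniqueness, any lift with the prescribed determinant factors through the universal Cayley--Hamilton quotient $R$, so two lifts correspond to two isomorphisms $R\cong M_d(A)$ that differ by an automorphism of $M_d(A)$; by Skolem--Noether over the local ring $A$ this automorphism is inner, whence the two lifts are conjugate by an element of $\GL_d(A)$.

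The main obstacle in both parts is the structure theory of Cayley--Hamilton algebras underlying the statements above. For (i), one must show that over an algebraically closed field the matrix-algebra quotient has total dimension \emph{exactly} $d$ and that $\overline D$ is recovered from it; this is the substantive content of Procesi's analysis of the Cayley--Hamilton identity together with its polarisations. For (ii), the delicate point is that it is Cayley--Hamilton-ness of degree $d$, and not merely the numerical equality $\dim_k(R\otimes_A k)=d^2$, that forces the matrix structure to propagate from the closed point of $\Spec A$ to all of it --- that is, that makes $R$ Azumaya --- and this is where the henselian hypothesis and the degree-$d$ polynomial identity are used in tandem. Granting these structural facts, the production of $\rho$ and $\widetilde\rho$ and the two uniqueness statements are formal, requiring only Brauer--Nesbitt, lifting of idempotents over henselian local rings, vanishing of the relevant Brauer group, and Skolem--Noether.
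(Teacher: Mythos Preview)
The paper does not supply its own proof of this theorem; it is stated with a citation to \cite{Chenevier-2014} and used as a black box. Your sketch correctly reconstructs the argument from that reference via the Cayley--Hamilton quotient, so there is nothing in the paper to compare against beyond the citation itself.

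One small redundancy in part (ii): once you have lifted a full system of matrix units from $M_d(k)$ to $R$ along the henselian $A$ and concluded that the resulting map $M_d(A)\to R$ is an isomorphism by Nakayama, you already have $R\cong M_d(A)$; the subsequent appeal to the vanishing of the Brauer group is then unnecessary (though it is a valid alternative route once $R$ is known to be Azumaya). Otherwise your outline, including the flagged obstacles (the structure of Cayley--Hamilton algebras over algebraically closed fields for (i), and the propagation of the matrix structure from the closed point for (ii)), accurately reflects where the work lies in Chenevier's proof.
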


\paragraph{Refinements of crystalline representations.} We recall the notion of `\textit{refinements}' of crystalline representations from \cite[\S 2.4]{Bellaiche-Chenevier-book}. Let $L$ be a finite extension of $\Q_p$ and let $V$ be an $n$-dimensional $L$-representation of $\Gal_{\Q_p}$. Assume that $V$ is crystalline. Also assume that the crystalline Frobenius $\varphi = \varphi_{\cris}$ acting on $\D_{\cris}(V)$ has all eigenvalues living in $L^\times$.

\begin{Definition}[$\text{\cite[\S 2.4.1]{Bellaiche-Chenevier-book}}$]\label{Definition: refinement of crystalline representation}
A \textbf{refinement} of $V$ is the data of a full $\varphi$-stable $L$-filtration \[
    \bbF_{\bullet} : 0 = \bbF_{0} \subsetneq \bbF_1 \subsetneq \cdots \subsetneq \bbF_{n-1}\subsetneq \bbF_{n} = \D_{\cris}(V).
\]
\end{Definition}

Suppose $\bbF_{\bullet}$ is a refinement of $V$, one sees immediately that it determines two orderings: \begin{enumerate}
    \item[(Ref 1)] An ordering $(\varphi_1, ..., \varphi_n)$ of the eigenvalues of $\varphi$ by the formula \[
        \det(X - \varphi|_{\bbF_i}) = \prod_{j=1}^i (X - \varphi_j).
    \] Notice that if the $\varphi_j$'s are all distinct, then such an ordering of eigenvalues of $\varphi$ conversely determines the refinement. 
    
    \item[(Ref 2)] An ordering $(a_1, ..., a_n)$ of Hodge--Tate weights of $V$. More precisely, the jumps of the Hodge filtration of $\D_{\cris}(V)$ induced on $\bbF_i$ are $(a_1, ..., a_i)$.
\end{enumerate}

\begin{Definition}[$\text{\cite[Definition 2.4.5]{Bellaiche-Chenevier-book}}$]\label{Definition: non-critical refiniment}
Suppose the Hodge--Tate weights $a_1 < \cdots < a_n$ of $V$ are all distinct. Let $\bbF$ be a refinement of $V$ and let $\Fil^{\bullet} \D_{\cris}(V)$ be the Hodge filtration of $\D_{\cris}(V)$. We say $\bbF$ is \textbf{non-critical} if, for all $1\leq i \leq n$, we have \[
    \D_{\cris}(V) = \bbF_i \oplus \Fil^{a_i+1} \D_{\cris}(V).
\]
\end{Definition}

Recall the Robba ring \[
    \calR_{L} := \left\{ f(X) = \sum_{i\in \Z} t_n(X-1)^n\in L\llbrack X \rrbrack: \begin{array}{c}
        f(X)\text{ converges on some annulus of $\C_p$ }  \\
        \text{of the form }r(f) \leq |X-1| \leq 1 
    \end{array}\right\}.
\] Here the norm $|\cdot|$ is the $p$-adic norm on $\C_p$ with the normalisation $|p| = 1/p$. Let $\Gamma = \Z_p^\times$. The theory of $(\varphi, \Gamma)$-modules yields an equivalence of categories between the category finite-dimensional $L$-representations of $\Gal_{\Q_p}$ and the category of \'etale $(\varphi, \Gamma)$-modules over $\calR_{L}$ (see, for example, \cite[\S 2.2]{Bellaiche-Chenevier-book}). In particular, we have a $(\varphi, \Gamma)$-module $\D_{\rig}(V)$ over $\calR_{L}$ associated with $V$.

\begin{Proposition}[$\text{\cite[Proposition 2.4.1 \& Proposition 2.4.7]{Bellaiche-Chenevier-book}}$]\label{Proposition: refinements and triangulations}
Let $\bbF_{\bullet}$ be a refinement of $V$. \begin{enumerate}
    \item[(i)] Then $\bbF_{\bullet}$ determines a unique filtration $\Fil_{\bullet} \D_{\rig}(V)$ of length $n$, i.e., a triangulation of $\D_{\rig}(V)$. Consequently, $\bbF_{\bullet}$ determines a unique collection of continuous characters $\delta_i: \Q_p^\times \rightarrow L^\times$ via the isomorphism 
    \[
        \Fil_{i}\D_{\rig}(V)/\Fil_{i-1}\D_{\rig}(V) \simeq \calR_{L}(\delta_i)
    \] given by \cite[Proposition 2.3.1]{Bellaiche-Chenevier-book}. Here, the tuple $\delta = (\delta_1, ..., \delta_n)$ is called the \textbf{parameter} of $V$.
    
    \item[(ii)] Moreover, suppose the Hodge--Tate weight of $V$ are all distinct $h_1 < \cdots < h_n$. Then, $\bbF_{\bullet}$ is non-critical if and only if the sequence Hodge--Tate weights $(a_1, ..., a_n)$ associated with $\bbF_{\bullet}$ in (Ref 2) is increasing, i.e., $a_i = h_i$ for all $i=1, ..., n$.
\end{enumerate}  
\end{Proposition}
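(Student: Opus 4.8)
The plan is to deduce part~(i) from Berger's description of the $(\varphi,\Gamma)$-module $\D_{\rig}(V)$ attached to a crystalline representation, together with the classification of rank-one $(\varphi,\Gamma)$-modules over the Robba ring, and to deduce part~(ii) from a combinatorial comparison of the Hodge filtration induced on each $\bbF_i$ with the full Hodge filtration of $\D_{\cris}(V)$. For (i), recall that since $V$ is crystalline there is a canonical $(\varphi,\Gamma)$-equivariant identification $\D_{\rig}(V)[1/t]\simeq\calR_L[1/t]\otimes_L\D_{\cris}(V)$, where $\Gamma$ acts $\calR_L$-semilinearly and fixes $\D_{\cris}(V)$, and $\D_{\cris}(V)=\bigl(\D_{\rig}(V)[1/t]\bigr)^{\Gamma}$. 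Given the refinement $\bbF_\bullet$, I would set
\[
    \Fil_i\,\D_{\rig}(V):=\D_{\rig}(V)\cap\bigl(\calR_L[1/t]\otimes_L\bbF_i\bigr),
\]
the intersection being formed inside $\D_{\rig}(V)[1/t]$. The verifications are then short: $\varphi$-stability of $\bbF_i$ and $\Gamma$-invariance of $\calR_L[1/t]\otimes_L\bbF_i$ make $\Fil_i\,\D_{\rig}(V)$ a sub-$(\varphi,\Gamma)$-module; it is saturated in $\D_{\rig}(V)$ because the quotient injects into the $t$-torsion-free module $\calR_L[1/t]\otimes_L(\D_{\cris}(V)/\bbF_i)$; and it is free of rank $i$ over $\calR_L$ since inverting $t$ gives back $\calR_L[1/t]\otimes_L\bbF_i$. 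Hence $\Fil_\bullet\,\D_{\rig}(V)$ is a triangulation of $\D_{\rig}(V)$, and it is the only one whose associated Frobenius filtration is $\bbF_\bullet$, a saturated sub-$(\varphi,\Gamma)$-module being recovered from its image in $\D_{\rig}(V)[1/t]$.

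For the parameter and its uniqueness, note that each graded piece $\Fil_i\,\D_{\rig}(V)/\Fil_{i-1}\,\D_{\rig}(V)$ is a saturated quotient of a saturated submodule, hence a rank-one $(\varphi,\Gamma)$-module over $\calR_L$; by \cite[Proposition 2.3.1]{Bellaiche-Chenevier-book} it is isomorphic to $\calR_L(\delta_i)$ for a unique continuous character $\delta_i\colon\Q_p^\times\to L^\times$, which yields $\delta=(\delta_1,\dots,\delta_n)$, its uniqueness following from that of the $\delta_i$ and of the triangulation. I would also make explicit how $\delta_i$ records (Ref 1) and (Ref 2): inverting $t$ turns the $i$-th graded piece into $\calR_L[1/t]\otimes_L(\bbF_i/\bbF_{i-1})$, so the $\varphi$-action shows that the unramified part of $\delta_i$ sends $p$ to the Frobenius eigenvalue $\varphi_i$ up to the normalising power of $p$, while the generalised Hodge--Tate weight of $\calR_L(\delta_i)$ equals the unique jump $a_i$ of the Hodge filtration on $\bbF_i/\bbF_{i-1}$.

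For (ii), fix $i$ and set $\Fil^\bullet\bbF_i:=\bbF_i\cap\Fil^\bullet\D_{\cris}(V)$; by (Ref 2) its jumps are the $i$ integers $a_1<\dots<a_i$, pairwise distinct because each graded piece of $\Fil^\bullet\bbF_i$ embeds into a one-dimensional graded piece of $\D_{\cris}(V)$. Since $a_i$ is the top jump, $\Fil^{a_i+1}\bbF_i=\bbF_i\cap\Fil^{a_i+1}\D_{\cris}(V)=0$ automatically, so $\bbF_i$ and $\Fil^{a_i+1}\D_{\cris}(V)$ are always in direct sum; the non-criticality condition $\D_{\cris}(V)=\bbF_i\oplus\Fil^{a_i+1}\D_{\cris}(V)$ therefore reduces to the equality of dimensions $i+\#\{j:h_j>a_i\}=n$, that is, $\#\{j:h_j\le a_i\}=i$. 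As the $h_j$ are pairwise distinct, $\{a_1,\dots,a_i\}$ is an $i$-element subset of $\{h_1<\dots<h_n\}$ whose largest element is $a_i$, so $\#\{j:h_j\le a_i\}\ge i$, with equality exactly when $\{a_1,\dots,a_i\}=\{h_1,\dots,h_i\}$, i.e.\ $a_t=h_t$ for all $t\le i$. Letting $i$ run over $1,\dots,n$ shows $\bbF_\bullet$ is non-critical if and only if $a_t=h_t$ for every $t$; since the case $i=n$ forces $(a_1,\dots,a_n)$ to be a permutation of $(h_1,\dots,h_n)$, this is the same as $(a_1,\dots,a_n)$ being increasing, as claimed.

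The real content sits in the analytic input to (i): the comparison $\D_{\rig}(V)[1/t]\simeq\calR_L[1/t]\otimes_L\D_{\cris}(V)$ for crystalline $V$, and the fact that the intersection $\Fil_i\,\D_{\rig}(V)$ is genuinely saturated with free graded pieces, i.e.\ that moving from a $\varphi$-stable subspace of $\D_{\cris}(V)$ to a sub-object of $\D_{\rig}(V)$ stays inside the category of $(\varphi,\Gamma)$-modules over $\calR_L$. Granting this (it is precisely Berger's theory, as recalled in \cite[\S 2.2]{Bellaiche-Chenevier-book}), the rank-one classification and the combinatorial argument for (ii) are formal, so I expect the only genuine obstacle to be that step: checking the saturation and freeness, or equivalently invoking Berger's functorial dictionary between filtered $\varphi$-modules and their associated $(\varphi,\Gamma)$-modules.
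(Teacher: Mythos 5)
The paper does not prove this proposition --- it is imported verbatim from \cite[Propositions 2.4.1 \& 2.4.7]{Bellaiche-Chenevier-book} --- so you are reconstructing Bella\"iche and Chenevier's argument. Your part (i) is correct and is exactly theirs: form $\Fil_i\D_{\rig}(V):=\D_{\rig}(V)\cap\left(\calR_L[1/t]\otimes_L\bbF_i\right)$ inside $\D_{\rig}(V)[1/t]\simeq\calR_L[1/t]\otimes_L\D_{\cris}(V)$ and check $(\varphi,\Gamma)$-stability, saturation, freeness of the graded pieces, and recoverability from the generic fibre.

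Part (ii) contains a circularity. You assert that $a_i$ is the largest jump of the induced Hodge filtration on $\bbF_i$, hence that $\Fil^{a_i+1}\bbF_i=0$ holds automatically. But (Ref~2) only defines $a_i$ as the \emph{new} jump appearing in passing from $\bbF_{i-1}$ to $\bbF_i$; the jump set of $\bbF_i$ is $\{a_1,\dots,a_i\}$, with no claim on its ordering, and the statement that $a_i=\max\{a_1,\dots,a_i\}$ for every $i$ is precisely what non-criticality is supposed to characterise, so assuming it begs the question. Concretely, take $n=2$, $h_1=0$, $h_2=1$, $\bbF_1=\Fil^1\D_{\cris}(V)$: then $a_1=1$, $a_2=0$, and $\Fil^{a_2+1}\bbF_2=\Fil^1\D_{\cris}(V)\neq 0$. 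There is also a notational mismatch: Definition~\ref{Definition: non-critical refiniment} (following Bella\"iche--Chenevier) phrases non-criticality using the \emph{sorted} weight, i.e.\ with $\Fil^{h_i+1}\D_{\cris}(V)$, not with $\Fil^{a_i+1}\D_{\cris}(V)$; the two conditions turn out to be equivalent, but you never say so, so strictly speaking you analyse a different condition. The cleaner route with the sorted weight is shorter than yours: since $\dim\bbF_i+\dim\Fil^{h_i+1}\D_{\cris}(V)=i+(n-i)=n$ always, non-criticality is simply $\bbF_i\cap\Fil^{h_i+1}\D_{\cris}(V)=0$, i.e.\ every jump of $\bbF_i$ is at most $h_i$; as the jump set $\{a_1,\dots,a_i\}$ consists of $i$ distinct elements of $\{h_1,\dots,h_n\}$, this forces $\{a_1,\dots,a_i\}=\{h_1,\dots,h_i\}$, and an induction on $i$ gives $a_i=h_i$, the converse being immediate. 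Your endpoint is right, but it is carried by the dimension count, not by the step you labelled automatic.
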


\begin{Remark}\label{Remark: trianguline deformation}
\normalfont The theory of $(\varphi, \Gamma)$-modules can be worked out for local artinian $\Q_p$-algebras (see, for example, \cite[\S 2]{Bellaiche-Chenevier-book}). Thus, it makes sense to consider the following deformation functors. Let $\Ar$ be the category of local artinian $\Q_p$-algebras whose residue field is isomorphic to $L$. Then, we define the \textbf{\textit{(local) trianguline deformation}} functor \begin{align*}
    \scrD_{V, \Fil_{\bullet} \D_{\rig}(V)}: \Ar & \rightarrow \Sets, \\
    A & \mapsto \left\{(V_A, \rho_A, \Fil_{\bullet} \D_{\rig}(V_A)): \begin{array}{l}
        V_A \simeq A^n  \\
        \rho_A: \Gal_{\Q_p} \rightarrow \GL(V_A)\simeq \GL_n(A) \text{ s.t. }\rho_A\otimes_A L \simeq V\\
        \Fil_{\bullet} \D_{\rig}(V_A) \otimes_{\calR_{A}} \calR_{L} \simeq \Fil_{\bullet} \D_{\rig}(V)
    \end{array}\right\}/\simeq 
\end{align*} We will also denote the above deformation functor by $\scrD_{V, \bbF_{\bullet}}$ as the triangulation $\Fil_{\bullet} D_{\rig}(V)$ is uniquely determined by $\bbF_{\bullet}$. In fact, we will confuse the refinement $\bbF_{\bullet}$ with the triangulation $\Fil_{\bullet}D_{\rig}(V)$ in what follows. 
\end{Remark}

\paragraph{Families of representations.}
Here, we collect some terminologies introduced in \cite[\S 5]{Bellaiche-Chenevier-book} that will be needed in the later subsections. Note that the terminology of `pseudocharacters' is used in \textit{op. cit.} since the notion of `determinants' was not yet discovered. In what follows, we shall adapt everything with the notion of determinants.

\vspace{2mm}

Let $G$ be a topological group with a continuous group homomorphism $\Gal_{\Q_p}\rightarrow G$, \emph{e.g.}, $G= \Gal_{\Q}$ with the natural inclusion $\Gal_{\Q_p}\hookrightarrow \Gal_{\Q}$. Therefore, any (continuous) representation $\rho$ of $G$ induces a (continuous) representation of $\Gal_{\Q_p}$, denoted by $\rho|_{\Gal_{\Q_p}}$. 

\vspace{1mm}

By a \textbf{\textit{family of representations}}, we mean a datum $(\calX, D)$, where $\calX$ is a reduced separated rigid analytic variety (viewed as an adic space) over $\Spa(\Q_p, \Z_p)$ and a continuous determinant $D: \scrO_{\calX}(\calX)[G] \rightarrow \scrO_{\calX}(\calX)$. The dimension of this family is understood to be the dimension of the determinant $D$, denoted by $n$. For any $\bfitx\in \calX$, let $k_{\bfitx}$ be the residue field of $\bfitx$, then we have the specialisation \begin{equation}\label{eq: specialisation of pseudocharacter}
    D|_{\bfitx} : G \xrightarrow{D} \scrO_{\calX}(\calX) \rightarrow k_{\bfitx}.
\end{equation} Applying Theorem \ref{Theorem: determinants and representations} (i), we see that $D|_{\bfitx}$ is nothing but the determinant of a (unique up to isomorphism) continuous semisimple representation $\rho_{\bfitx}: G \rightarrow \GL_n(\overline{k_{\bfitx}})$. 

\begin{Definition}[$\text{\cite[Definition 4.2.3]{Bellaiche-Chenevier-book}}$]\label{Definition: refined families}
A \textbf{refined family of representations} of dimension $n$ is a datum \((\calX, D, \calQ, \{\alpha_i: i=1,..., n\}, \{F_i: i=1, ..., n\}),\) where \begin{enumerate}
    \item[(a)] $(\calX, D)$ is a family of representations of dimension $n$, 
    \item[(b)] $\calQ \subset \calX$ is a Zariski dense subset, 
    \item[(c)] $\alpha_i\in \scrO_{\calX}(\calX)$ is an analytic function for $i=1, ..., n$, 
    \item[(d)] $F_i\in \scrO_{\calX}(\calX)$ is an analytic function for $i=1, ..., n$,
\end{enumerate} such that \begin{enumerate}
    \item[(i)] For every $\bfitx\in \calX$, the Hodge--Tate--Sen weights\footnote{ Here, the \emph{Hodge--Tate--Sen weight} is defined to be the roots of the Sen polynomial (see, for example, \cite[Definition 2.24]{Liu-trianguline}).} for $\rho_{\bfitx}|_{\Gal_{\Q_p}}$ are $\alpha_1(\bfitx)$, ..., $\alpha_n(\bfitx)$.
    
    \item[(ii)] For each $y\in \calQ$, the representation $\rho_{\bfity}|_{\Gal_{\Q_p}}$ is crystalline (so that $\alpha_i(\bfity)$'s are integers) and we have $\alpha_1(\bfity) < \cdots < \alpha_n(\bfity)$. 
    
    \item[(iii)] For each $\bfity\in \calQ$, the eigenvalues of the crystalline Frobenius $\varphi$ on $\D_{\cris}(\rho_{\bfity}|_{\Gal_{\Q_p}})$ are distinct and are $(p^{\alpha_1(\bfity)} F_1(\bfity), ..., p^{\alpha_n(\bfity)}F_n(\bfity))$.
    
    \item[(iv)] For any $C\in \Z_{> 0}$, define \[
        \calQ_{C} := \left\{\bfity\in \calQ: \begin{array}{l}
            \alpha_{i+1}(\bfity) - \alpha_{i}(\bfity)> C(\alpha_{i}(\bfity) - \alpha_{i-1}(\bfity)) \text{ for }i=2, ..., n-1  \\
            \alpha_2(\bfity) - \alpha_1(\bfity) > C 
        \end{array} \right\}.
    \] We request that $\calQ_C$ accumulates at any point of $\calQ$ for any $C$. In other words, for any $\bfity\in \calQ$ and any $C\in \Z_{>0}$, there is a basis of affinoid neighbourhoods $\calU$ of $\bfitx$ such that $\calU \cap \calQ_C$ is Zariski dense in $\calU$.
    
    \item[(*)] For each $i=1, ..., n$, there is a continuous character $\Z_p^\times \rightarrow \scrO_{\calX}(\calX)^\times$ whose derivative at $1$ is the map $\alpha_i$ and whose evaluation at any point $\bfity\in \calQ$ is the elevation to the $\alpha_i(\bfity)$-th power.
\end{enumerate}
\end{Definition}

Let $(\calX, D, \calQ, \{\alpha_i: i=1, ..., n\}, \{F_i: i=1, ..., n\})$ be a refined family of dimension $n$. We fix a point $\bfity\in \calQ$. Then $\rho_{\bfity}$ admits a natural refinement $\bbF_{\bullet}^{\bfity}$ given by the ordering of distinct eigenvalues \[
    (p^{\alpha_1(\bfity)} F_1(\bfity), ..., p^{\alpha_n(\bfity)} F_n(\bfity))
\] of the crystalline Frobenius acting on $\D_{\cris}(\rho_y|_{\Gal_{\Q_p}})$ (\cite[Definition 4.2.4]{Bellaiche-Chenevier-book}). We assume that $\rho_{\bfity}$ is irreducible and it satisfies the following two conditions:\begin{enumerate}
    \item[(REG)] The refinement $\bbF_{\bullet}^{\bfity}$ is \textbf{\textit{regular}}, i.e., for any $i=1, ..., n$, $p^{\alpha_1(\bfity) + \cdots + \alpha_i(\bfity)}F_1(\bfity) \cdots F_i(\bfity)$ is an eigenvalue of the crystalline Frobenius $\varphi$ acting on $\D_{\cris}(\wedge^i {\rho}_{\bfity}|_{\Gal_{\Q_p}})$ of multiplicity one.
    
    \item[(NCR)] The refinement $\bbF_{\bullet}^{\bfity}$ is non-critical. 
\end{enumerate}
Since $\rho_{\bfity}$ is assumed to be irreducible, Theorem \ref{Theorem: determinants and representations} (ii) implies that there is a unique continuous representation \[
    \rho_{\calX, \bfity}: G \rightarrow \GL_n(\scrO_{\calX, \bfity})
\] such that $\rho_{\calX, \bfity} \otimes_{\scrO_{\calX, \bfity}} k_{\bfity} = \rho_{\bfity}$ and so $\det\rho_{\bfity}$ coincides with the composition $G\xrightarrow{D} \scrO_{\calX}(\calX) \rightarrow \scrO_{\calX, \bfity}$. Following \cite[\S 4.4]{Bellaiche-Chenevier-book}, we define a continuous character $\delta_{\bfity}: \Q_p^\times \rightarrow (\scrO_{\calX, \bfity}^\times)^n$ by setting \begin{equation}\label{eq: parameter ass. with y}
    \delta_{\bfity}(p) = (F_{1, \bfity}, ..., F_{n, \bfity})\quad \text{ and }\quad \delta_{\bfity}|_{\Z_p^\times} = (\alpha_{1, \bfity}^{-1}, ..., \alpha_{n, \bfity}^{-1}),
\end{equation} where $F_{i, \bfity}$ and $\alpha_{i, \bfity}$ are the images of $F_i$ and $\alpha_i$ in $\scrO_{\calX, \bfity}$ respectively.

\begin{Theorem}[$\text{\cite[Theorem 4.4.1]{Bellaiche-Chenevier-book}}$]\label{Theorem: BC refined deformation}
For any ideal $\frakI\subsetneq \scrO_{\calX, \bfity}$ of cofinite length, $\rho_{\calX, \bfity} \otimes_{\scrO_{\calX, \bfity}} \scrO_{\calX, \bfity}/\frakI$ is a trianguline deformation of $(\rho_{\bfity}, \bbF_{\bullet}^{\bfity})$, i.e., it belongs to $\scrD_{\rho_{\bfity}|_{\Gal_{\Q_p}}, \bbF_{\bullet}^{\bfity}}(\scrO_{\calX, \bfity}/\frakI)$ (defined in Remark \ref{Remark: trianguline deformation}), whose parameter is $\delta_{\bfity} \otimes \scrO_{\calX, \bfity}/\frakI$.
\end{Theorem}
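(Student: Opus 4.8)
The plan is to prove the theorem by constructing, for each cofinite-length ideal $\frakI\subsetneq\scrO_{\calX,\bfity}$, an explicit triangulation of the relevant $(\varphi,\Gamma)$-module in the family near $\bfity$, and then reading off from it both membership in $\scrD_{\rho_\bfity|_{\Gal_{\Q_p}},\bbF_\bullet^\bfity}$ and the value $\delta_\bfity\otimes\scrO_{\calX,\bfity}/\frakI$ of the parameter. Write $A:=\scrO_{\calX,\bfity}/\frakI$, an artinian local $\Q_p$-algebra with residue field $L=k_\bfity$, set $V_A:=(\rho_{\calX,\bfity}\otimes_{\scrO_{\calX,\bfity}}A)|_{\Gal_{\Q_p}}$, and form the $(\varphi,\Gamma)$-module $\D_{\rig}(V_A)$ over $\calR_A$. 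First I would produce a triangulation $\Fil_\bullet\D_{\rig}(V_A)$ such that (a) $\Fil_i\D_{\rig}(V_A)/\Fil_{i-1}\D_{\rig}(V_A)\simeq\calR_A(\delta_{i,\bfity}\bmod\frakI)$, where $\delta_\bfity=(\delta_{1,\bfity},\dots,\delta_{n,\bfity})$ is the parameter of (\ref{eq: parameter ass. with y}), and (b) its reduction modulo $\frakm_A$ is the triangulation of $\D_{\rig}(\rho_\bfity|_{\Gal_{\Q_p}})$ attached to $\bbF_\bullet^\bfity$ by Proposition \ref{Proposition: refinements and triangulations}(i). Granting such a triangulation, Remark \ref{Remark: trianguline deformation} packages $(V_A,\rho_{\calX,\bfity}\otimes A,\Fil_\bullet\D_{\rig}(V_A))$ into an element of $\scrD_{\rho_\bfity|_{\Gal_{\Q_p}},\bbF_\bullet^\bfity}(A)$ with parameter $\delta_\bfity\otimes A$, and compatibility as $\frakI$ shrinks is built into the construction.

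The triangulation I would build by induction on $i$, the case $\Fil_0=0$ being trivial. At stage $i$, given $\Fil_{i-1}$, put $\D':=\D_{\rig}(V_A)/\Fil_{i-1}\D_{\rig}(V_A)$; one must exhibit a \emph{saturated} $(\varphi,\Gamma)$-stable embedding $\calR_A(\delta_{i,\bfity}\bmod\frakI)\hookrightarrow\D'$, a datum controlled by $\Hom_{(\varphi,\Gamma)}(\calR_A(\delta_{i,\bfity}\bmod\frakI),\D')$ and its behaviour in the family. Two facts drive this. First, by Definition \ref{Definition: refined families}(iv) the locus $\calQ_C$ with $C\gg 0$ is Zariski dense near $\bfity$, and at each $y\in\calQ_C$ the representation $\rho_y$ is crystalline, so its refinement supplies such a rank-one sub after specialisation at $y$; hence this $\Hom$-module is nonzero on a dense set. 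Second, the regularity hypothesis (REG) forces $p^{\alpha_1(\bfity)+\cdots+\alpha_i(\bfity)}F_1(\bfity)\cdots F_i(\bfity)$ to be an eigenvalue of the crystalline Frobenius on $\D_{\cris}(\wedge^i\rho_\bfity|_{\Gal_{\Q_p}})$ of multiplicity one, so the rank-one sub at $\bfity$ itself is unique up to scalar. This multiplicity-one rigidity, combined with the flatness and base-change properties of the cohomology of $(\varphi,\Gamma)$-modules over artinian $\Q_p$-algebras, would allow the rank-one sub to be lifted from $L$ to $A$. Saturatedness over $A$ then follows from saturatedness on the special fibre, which holds because $\bbF_\bullet^\bfity$ is non-critical (NCR): by Proposition \ref{Proposition: refinements and triangulations}(ii) the Hodge--Tate weights appear in increasing order along the filtration, which pins down the rank and the Hodge--Tate--Sen weight of each graded piece and leaves no room for a non-saturated image. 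Taking the preimage of this sub in $\D_{\rig}(V_A)$ yields $\Fil_i$, and the induction proceeds.

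It then remains to check (a), (b) and the parameter assertion, which are bookkeeping. Assertion (a) holds by construction, and the graded pieces are $(\delta_{1,\bfity},\dots,\delta_{n,\bfity})\bmod\frakI=\delta_\bfity\otimes\scrO_{\calX,\bfity}/\frakI$ in the notation of (\ref{eq: parameter ass. with y}). For (b), reducing $\Fil_\bullet\D_{\rig}(V_A)$ modulo $\frakm_A$ gives a triangulation of $\D_{\rig}(\rho_\bfity|_{\Gal_{\Q_p}})$ with $i$-th graded piece $\calR_L(\delta_{i,\bfity}\bmod\frakm_A)$; since the crystalline Frobenius eigenvalues at $\bfity$ are distinct, a refinement is recovered from the induced ordering (Ref 1), and matching the ordering of eigenvalues identifies this with the triangulation attached to $\bbF_\bullet^\bfity$ by Proposition \ref{Proposition: refinements and triangulations}(i).

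The main obstacle will be the middle step: propagating the rank-one sub-$(\varphi,\Gamma)$-module from the Zariski-dense locus of crystalline regular points to the infinitesimal neighbourhood $\Spec A$. This is exactly where (REG) is indispensable --- without multiplicity one the relevant $\Hom$-module could jump inside the family and the interpolation would fail --- and it relies on the behaviour of $(\varphi,\Gamma)$-module cohomology in families, which is the technical core of \cite[\S 4]{Bellaiche-Chenevier-book}. By comparison, the reduction to the artinian setting, the Hodge--Tate weight bookkeeping via (NCR), and the identification of the parameter are routine.
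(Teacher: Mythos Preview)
The paper does not supply its own proof of this theorem: it is stated as a black-box citation of \cite[Theorem 4.4.1]{Bellaiche-Chenevier-book} and used only as input to later results. There is therefore nothing in the paper to compare your argument against.

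That said, your sketch is a faithful outline of the strategy in \cite[\S 4]{Bellaiche-Chenevier-book}: inductively producing saturated rank-one sub-$(\varphi,\Gamma)$-modules over the artinian thickening, using (REG) to guarantee the uniqueness needed for interpolation from the Zariski-dense crystalline locus, and (NCR) to secure saturatedness via the Hodge--Tate weight ordering. You have also correctly located the technical heart of the matter in the behaviour of $(\varphi,\Gamma)$-module cohomology in families. One small caveat: the actual argument in \emph{loc.\ cit.} does not quite proceed by directly lifting a rank-one sub from $L$ to $A$ via ``flatness and base change'' as you phrase it; rather, it passes through an analytic continuation argument on the family (using the Zariski density of $\calQ_C$ and Sen theory / Kisin's $X_{fs}$-type constructions) and then specialises to the artinian quotient. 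Your description of the mechanism is morally right but compresses a substantial amount of work; if you intend to flesh this out, that interpolation step is where the effort lies.
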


\subsection{Galois representations for \texorpdfstring{$\GSp_{2g}$}{GSp}}\label{subsection: Galois representation for GSp}
Given a dominant weight $k = (k_1, ..., k_g)\in \Z_{\geq0}^g$, recall the $\GSp_{2g}$-representations \begin{align*}
    \V_{\GSp_{2g}, k}^{\alg} & = \left\{\phi: \GSp_{2g}(\Q_p)\rightarrow \Q_p: \begin{array}{l}
        \phi\text{ is a polynomial function }  \\
        \phi(\bfgamma\bfbeta) = k(\bfbeta)\phi(\bfgamma) \,\, \forall (\bfgamma, \bfbeta)\in \GSp_{2g}(\Q_p)\times B_{\GSp_{2g}}(\Q_p)  
    \end{array}\right\}\\
    \V_{\GSp_{2g}, k}^{\alg, \vee} & = \Hom_{\Q_p}(\V_{\GSp_{2g}, k}^{\alg}, \Q_p).
\end{align*} The representation $\V_{\GSp_{2g}, k}^{\alg}$ is equipped with a right $\GSp_{2g}(\Q_p)$-action by the formula \[
    \bfgamma \cdot \phi(\bfgamma') = \phi(\bfgamma\bfgamma')
\] for any $\phi\in \V_{\GSp_{2g}, k}^{\alg}$, $\bfgamma, \bfgamma'\in \GSp_{2g}(\Q_p)$. Hence, $\V_{\GSp_{2g}, k}^{\alg, \vee}$ is equipped with a left $\GSp_{2g}(\Q_p)$-action and consequently induces a local system on both $X_{\Iw^+}(\C)$ and $X(\C)$. We abuse the notation and use the same symbol to denote such local system. In particular, we can consider the parabolic cohomology group \[
     H_{\tame,\Par, k}^{\alg, \tol} := \oplus_{t=0}^{2n_0} H_{\Par}^t(X(\C), \V_{\GSp_{2g}, k}^{\alg, \vee}).
\] Note that the double cosets $[\GSp_{2g}(\Z_p) (x\cdot \bfu_{p,i})\GSp_{2g}(\Z_p)]$ acts on $H_{\tame, \Par, k}^{\alg, \tol}$ for any $x\in \Weyl_{\GSp_{2g}}$. We denote by \[
    \bbT^{\tame} := \bbT^p \otimes_{\Z_p} \Z_p\left[[\GSp_{2g}(\Z_p) (x\cdot \bfu_{p,i})\GSp_{2g}(\Z_p)]: i=0, 1, ..., g-1, x\in \Weyl_{\GSp_{2g}}\right].
\] In particular, it makes sense to consider the Hecke polynomial $P_{\Hecke, p}(Y)$ at $p$ in this case and is defined as in (\ref{eq: Hecke poly at l}).

\begin{Hypothesis}\label{Hyp: associated Gal. rep}
For any $\bbT^{\tame}$-eigenclass $[\mu]\in H_{\tame, \Par, k}^{\alg, \tol}$ with eigensystem $\lambda_{[\mu]}: \bbT^{\tame} \rightarrow \overline{\Q}_p$, there exists a (continuous) Galois representation \[
    \rho_{[\mu]}: \Gal_{\Q} \xrightarrow{\rho_{[\mu]}^{\spin}} \GSpin_{2g+1}(\overline{\Q}_p)\xrightarrow{\spin} \GL_{2^g}(\overline{\Q}_p)\footnote{ We refer the readers to \cite[Lecture 20]{Fulton-Harris} for the definition and properties of the representation $\spin: \GSpin_{2g+1} \rightarrow \GL_{2^g}$.}
\] such that \begin{enumerate}
    %\item[(i)] The representation $\rho_{[\mu]}$ is irreducible and there exists a finite extension $F$ of $\Q_p$ such that the image of $\rho_{[\mu]}$ lands in $\GL_{2^g}(F)$. 
    
    \item[(i)] The representation $\rho_{[\mu]}$ is unramified outside $pN$ and \[
     \charpoly(\Frob_{\ell})(Y) = \lambda_{[\mu]}(P_{\Hecke, \ell}(Y)) := \prod_{x\in \Weyl^H} (Y - \lambda_{[\mu]}(T_{\ell, 0}^x))
    \] for any $\ell\nmid pN$, where $\charpoly(\Frob_{\ell})(Y)$ stands for the characteristic polynomial of the Frobenius at $\ell$ and $P_{\Hecke, \ell}(Y)$ is the Hecke polynomial defined in (\ref{eq: Hecke poly at l}). Moreover, the coefficients of these two polynomials are algebraic integers over $\Q$.
    
    \item[(ii)] The representation $\rho_{[\mu]}|_{\Gal_{\Q_p}}$ is crystalline with Hodge--Tate weights \[
    (a_1, ..., a_{2^g}) = (0, a_g', \cdots, a_1', a_g'+a_{g-1}', ..., a_2'+a_1', \cdots, a_g'+\cdots+a_1'), \footnote{ These numbers are all possibilities of sums of $a_i'$'s. The order is chosen so that if $k = (k_1, ..., k_g) = (k_g+g-1, k_g+g-2, ..., k_g)$, we have $a_1 < a_2 < \cdots < a_{2^g}$.}
    \] where $a_i' = (g+1-i)+k_i$. Let $\varphi = \varphi_{\cris}$ be the crystalline Frobenius acting on $\D_{\cris}(\rho_{[\mu]}|_{\Gal_{\Q_p}})$, we moreover have \[
        \charpoly(\varphi)(Y) = \lambda_{[\mu]} (P_{\Hecke, p}(Y)),
    \] where $\charpoly(\varphi)(X)$ is the characteristic polynomial of $\varphi$ acting on $\D_{\cris}(\rho_{[\mu]}|_{\Gal_{\Q_p}})$, and the coefficients of these two polynomials are algebraic integers over $\Q$. We order the eigenvalues of $\varphi$ so they satisfy \[
        (\varphi_1, ..., \varphi_{2^g}) = \varphi_1(1, \varphi_2', ..., \varphi_{g+1}', \varphi_2'\varphi_3', ..., \varphi_{g}'\varphi_{g+1}', ..., \varphi_{2}'\cdots \varphi_{g+1}')
    \] for some $(\varphi_2', ..., \varphi_{g+1}')$. The order of the later tuple is chosen similarly as the Hodge--Tate weights. In particular, $\varphi_2, ..., \varphi_{g+1}$ are divisible by $\varphi_1$ and the $2^g$ eigenvalues of $\varphi$ depend only on $\varphi_1, ..., \varphi_{g+1}$.%The representation $\rho_{[\mu]}|_{G_{\Q_p}}$ is furthermore crystalline if $[\mu]$ is old at $p$.
\end{enumerate}
\end{Hypothesis}

\begin{Remark}\label{Remark: unramified outside bad primes}
\normalfont Recall that $\Sbad$ is the finite set of prime numbers which divides $pN$. Let $\Gal_{\Q, \Sbad}$ be the Galois group of the maximal extension of $\Q$ which is unramified outside $\Sbad$. Therefore, the representation $\rho_{[\mu]}$ in Hypothesis \ref{Hyp: associated Gal. rep} can be regarded as a Galois representation of $\Gal_{\Q, \Sbad}$.
\end{Remark}

\begin{Remark}\label{Remark: on Hyp 1}
\normalfont Evidently, Hypothesis \ref{Hyp: associated Gal. rep} comes from Global Langlands Correspondence. We comment briefly on this hypothesis.  \begin{enumerate}
    \item[(i)] When $g\leq 2$, Hypothesis \ref{Hyp: associated Gal. rep} (i) is well-known (see, for example, \cite{Weissauer}). The work of A. Kret and S. W. Shin (\cite{Kret-Shin}) gave a positive answer to Hypothesis \ref{Hyp: associated Gal. rep} (i) under some conditions on the automorphic representations for general $g$. Although their result is not completely unconditional, it suggests that Hypothesis \ref{Hyp: associated Gal. rep} is reasonable to assume (but could be difficult to prove in general).
    \item[(ii)] Hypothesis \ref{Hyp: associated Gal. rep} (ii) is also well-studied when $g\leq 2$. In particular, E. Urban proved the case for $g=2$ in \cite{Urban-2005}, result deduced from A. Scholl's motive for modular forms (\cite{Scholl-motive}). For general $g$, the property is expected if Hypothesis \ref{Hyp: spin functoriality} below holds (see, for example, \cite[Theorem 2.1 \& Corollary 2.2]{Patrikis--Taylor-2015}).
\end{enumerate}  
\end{Remark}

By \cite[Lemma 0.1]{Kret-Shin} and under the assumption of Hypothesis \ref{Hyp: associated Gal. rep}, we know that given a $\bbT^{\tame}$-eigenclass $[\mu]$ as above, $\rho_{[\mu]}$ factors as \[
    \rho_{[\mu]}: \Gal_{\Q, \Sbad} \xrightarrow{\rho_{[\mu]}^{\spin}} \GSpin_{2g+1}(\overline{\Q}_p) \xrightarrow{\spin}\mathrm{GS}(\overline{\Q}_p) \rightarrow \GL_{2^g}(\overline{\Q}_p),
\] where \[
    \mathrm{GS} =  \left\{\begin{array}{ll}
        \GO_{2^g}, & \text{ if }g(g+1)/2 \text{ is even} \\
        \GSp_{2^g}, & \text{ if }g(g+1)/2 \text{ is odd}
    \end{array}\right.
\] and the last arrow is nothing but the natural inclusion. Define \begin{align*}
    \mathfrak{gl}_{2^g} &:= \text{ the Lie algebra of $\GL_{2^g}(\overline{\Q}_p)$, equipped with the induced adjoint $\Gal_{\Q, \Sbad}$-action by $\rho_{[\mu]}$}\\
    \ad\rho_{[\mu]} &:= \text{ the Lie algebra of $\mathrm{GS}(\overline{\Q}_p)$, equipped with the induced adjoint $\Gal_{\Q, \Sbad}$-action by $\spin\circ \rho_{[\mu]}^{\spin}$}\\
    \ad\rho_{[\mu]}^{\spin} &:= \text{ the Lie algebra of $\GSpin_{2g+1}(\overline{\Q}_p)$, equipped with the induced adjoint $\Gal_{\Q, \Sbad}$-action by $\rho_{[\mu]}^{\spin}$}.
\end{align*} Then, the inclusions \[
    \GSpin_{2g+1}(\overline{\Q}_p) \hookrightarrow \mathrm{GS}(\overline{\Q}_p) \hookrightarrow \GL_{2^g}(\overline{\Q}_p)
\] induces $\Gal_{\Q, \Sbad}$-equivariant inclusions \[
    \ad\rho_{[\mu]}^{\spin} \hookrightarrow \ad\rho_{[\mu]} \hookrightarrow \mathfrak{gl}_{2^g},
\] which then further induces inclusions of the Galois cohomology groups \[
    H^1(\Gal_{\Q, \Sbad}, \ad\rho_{[\mu]}^{\spin}) \hookrightarrow H^1(\Gal_{\Q, \Sbad}, \ad\rho_{[\mu]}) \hookrightarrow H^1(\Gal_{\Q, \Sbad}, \mathfrak{gl}_{2^g}).
\]

\vspace{2mm}

On the other hand, let $\mathfrak{sl}_{2^g}$ be the trace-zero part of $\mathfrak{gl}_{2^g}$ and let \[
    \ad^{0}\rho_{[\mu]} := \ad\rho_{[\mu]} \cap \mathfrak{sl}_{2^g}\quad \text{ and }\quad \ad^0\rho_{[\mu]}^{\spin} := \ad\rho_{[\mu]}^{\spin} \cap \mathfrak{sl}_{2^g}.
\] Note that the decomposition $\mathfrak{gl}_{2^g} = \mathfrak{sl}_{2^g} \oplus \mathfrak{gl}_1$ is $\Gal_{\Q}$-equivariant, we thus have a commutative diagram \begin{equation}\label{eq: inclusions of Galois cohomologies}
    \begin{tikzcd}
        \text{$H^1(\Gal_{\Q, \Sbad}, \ad\rho_{[\mu]}^{\spin})$}\arrow[r, hook] & \text{$H^1(\Gal_{\Q, \Sbad}, \ad\rho_{[\mu]})$} \arrow[r, hook] & H^1(\Gal_{\Q, \Sbad}, \mathfrak{gl}_{2^g})\\
        \text{$H^1(\Gal_{\Q, \Sbad}, \ad^0\rho_{[\mu]}^{\spin})$}\arrow[r, hook]\arrow[u, hook] & \text{$H^1(\Gal_{\Q, \Sbad}, \ad^0\rho_{[\mu]})$} \arrow[r, hook]\arrow[u, hook] & H^1(\Gal_{\Q, \Sbad}, \mathfrak{sl}_{2^g})\arrow[u, hook]
    \end{tikzcd},
\end{equation} where the arrows are all inclusions. 

\vspace{2mm}

Under the assumption of Hypothesis \ref{Hyp: associated Gal. rep}, one obtains a $2^g$-dimensional Galois representation for each eigenclass $[\mu]$. It is then a natural question to ask whether the attached Galois representation admits an associated cuspidal automorphic representation of $\GL_{2^g}$. The answer to this question is expected to be affirmative, which we state as the next hypothesis.

\begin{Hypothesis}[The potential spin functoriality]\label{Hyp: spin functoriality}
Given a $\bbT^{\tame}$-eigenclass $[\mu]\in H_{\tame, \Par, k}^{\alg, \tol}$, there exists a finite real extension $L\subset \overline{\Q}$ of $\Q$ with $\rho_{[\mu]}|_{\Gal_L}$ being irreducible and a generic cuspidal automorphic representation $\pi_{[\mu]}$ of $\GL_{2^g}(\A_{L})$, where $\A_{L}$ is the ring of adèles of $L$, such that\begin{enumerate}
    \item[$\bullet$] $\pi_{[\mu]}$ is unramified outside the places above $\Sbad$ and 
    \item[$\bullet$] the Galois representation associated with $\pi_{[\mu]}$ is isomorphic to $\rho_{[\mu]}|_{\Gal_{L}}$. 
\end{enumerate}
\end{Hypothesis}

\begin{Remark}
\normalfont We should remark that Kret and Shin verify the above hypothesis in \cite[Theorem C]{Kret-Shin} under some stronger conditions than the ones they verify Hypothesis \ref{Hyp: associated Gal. rep}. 
\end{Remark}

On the other hand, we also write \[
    H_{\Par, k}^{\alg, \tol} := \oplus_{t=0}^{2n_0} H_{\Par}^t(X_{\Iw^+}(\C), \V_{\GSp_{2g}, k}^{\alg, \vee}).
\] The forgetful map $X_{\Iw}(\C) \rightarrow X(\C)$ then induces a morphism (see also (\ref{eq: p-old morphism}) and (\ref{eq: p-old morphism for compactly supported cohomology})) \begin{equation}\label{eq: morphism from tame coh. to strict Iwahori coh.}
    \Lambda_p : H_{\tame, \Par, k}^{\alg, \tol} \rightarrow H_{\Par, k}^{\alg, \tol}.
\end{equation} Observe that this morphism is $\bbT^p$-equivariant. Moreover, we have slope decomposition on the latter space with respect to the action of $U_p$ since it is a finite-dimensional $\Q_p$-vector space. Thus, for each $h\in \Q_{>0}$, we write \[
    H_{\tame, \Par, k}^{\alg, \tol, \leq h} := \image\left( H_{\tame, \Par, k}^{\alg, \tol} \xrightarrow{\Lambda_p} H_{\Par, k}^{\alg, \tol} \twoheadrightarrow H_{\Par, k}^{\alg, \tol, \leq h}\right),
\] where $H_{\Par, k}^{\alg, \tol, \leq h}$ is the `$\leq h$' part of $H_{\Par, k}^{\alg, \tol}$ under the action of $U_p$. Thus, for any $\bbT^{\tame}$-eigenclass $[\mu]$ in $H_{\tame, \Par, k}^{\alg, \tol}$, its image in $H_{\tame, \Par, k}^{\alg, \tol, \leq h}$ can be decomposed as a sum of $\bbT$-eigenclasses. We call any of these factors a \textbf{\textit{$p$-stabilisation}} of $[\mu]$.

\vspace{2mm}

It is a natural question asking how the eigenvalues of a $\bbT^{\tame}$-eigenclass interact with the eigenvalues of its $p$-stabilisations. The following statement is due to Harron--Jorza.

\begin{Proposition}[$\text{\cite[Lemma 17]{Harron-Jorza}}$]\label{Prop: Tp and Up eigenvalues}
\begin{enumerate}
    \item[(i)]  Let $[\mu]$ be a $\bbT^{\tame}$-eigenclass with eigensystem $\lambda_{[\mu]}$ in $H_{\Par, k}^{\alg, \tol, \leq h}$. Then, there exist $2^gg!$ $p$-stabilisations $[\mu]^{(p)}$, indexed by $\Weyl_{\GSp_{2g}}$.
    \item[(ii)] Chose a bijection of sets \(
        \iota: \{1, 2, ..., 2^g\} \xrightarrow{\sim} \Weyl^H
    \) so that $\lambda_{[\mu]}\left( T_{p, 0}^{\iota(i)}\right) = \varphi_i$, where $T_{p, 0}^{\iota(i)}$ is the Hecke operator defined by $[\GSp_{2g}(\Z_p) (\iota(i)\cdot \bfu_{p,0}) \GSp_{2g}(\Z_p)]$ acting on $H_{\tame, \Par, \wt(\bfitx)}^{\alg, \tol}$ and $\varphi_i$ is the $i$-th eigenvalue of the crystalline Frobenius associated with $\rho_{[\mu]}$.\footnote{This can be done due to Hypothesis \ref{Hyp: associated Gal. rep} (ii).} Denote by $\lambda_{i} = \lambda_{[\mu]}(T_{p, 0}^{\iota(i)})$ and let $[\mu]^{(p)}$ be any of the $p$-stablisation of $[\mu]$ with Hecke eigensystem $\lambda_{[\mu]}^{(p)}$. Then, there exists a constant $\vartheta \in \Q$ (depending only on $g$) such that, for $i=1, ..., g+1$, \[
    \lambda_{[\mu]}^{(p)} (U_{p,0}^{\iota(i)})= p^{\vartheta}\cdot p^{-(g+1-i)} \lambda_1 \prod_{j=1}^{g}(\lambda_{j+1}/\lambda_1)^{a_{\nu(j)}\text{ or }1-a_{\nu(j)}},
\] where \begin{enumerate}
    \item[$\bullet$] the index of $[\mu]^{(p)}$ is $(\epsilon, \nu)\in \Weyl_{\GSp_{2g}} = \bfSigma_g\ltimes (\Z/2\Z)^{g}$ and \[
        a_{\nu(j)} = \left\{\begin{array}{ll}
            1, &  \nu(j) = i\\
            0, & \text{otherwise}
        \end{array}\right.;
    \]
    \item[$\bullet$] the exponent depends on whether $\epsilon(\nu(j)) = 0 \text{ or }1\in \Z/2\Z$.
\end{enumerate}
\end{enumerate}
\end{Proposition}

\subsection{Families of Galois representations on the cuspidal eigenvariety}\label{subsection: families of representations on eigenvariety}

The goal of this section is to construct families of Galois representations on a sublocus of the cuspidal eigenvariety $\calE_0$ under the assumption of Hypothesis \ref{Hyp: associated Gal. rep}. 

\vspace{2mm}

For any dominant algebraic weight $k\in \Z_{>0}^g$, recall from \cite[Theorem 6.4.1]{Ash-Stevens} that there is $h_k\in \R_{>0}$ such that for any $h\in \Q_{>0}$ with $h<h_k$, we have a canonical isomorphism \[
    H_{\Par, k}^{\tol, \leq h}  \xrightarrow{\sim} H_{\Par, k}^{\alg, \tol, \leq h}.
\] We then define the \textbf{\textit{$p$-stabilised classical locus}} of $\calE_0$ to be the locus $\calX^{\cl}\subset \calE_0$, containing those $\bfitx$ with the following conditions:\begin{enumerate}
    \item[$\bullet$] $\wt(\bfitx) = k\in \Z_{>0}^g$ is a dominant algebraic weight; 
    \item[$\bullet$] there exists $h< h_k$ such that $\bfitx$ corresponds to a $p$-stabilisation of slope $\leq h$ of a $\bbT^{\tame}$-eigenclass $[\mu]$ in $H_{\tame, \Par, k}^{\alg, \tol}$; 
    \item[$\bullet$] the Galois representation $\rho_{[\mu]}^{\spin}$ attached to $[\mu]$ (by Hypothesis \ref{Hyp: associated Gal. rep}) is irreducible. 
\end{enumerate} Consequently, we define \[
    \calE_0^{\irr} := \text{ the Zariski closure of $\calX^{\cl}$ in $\calE_0$.}
\]

\begin{Remark}
\normalfont We do not expect every classical point in $\calE_0$ corresponds to an irreducible Galois representation due to the endoscopy theory of automorphic forms. As we will be only interested in classical points that correspond to irreducible Galois representations, we do not lose information if we only consider $\calE_0^{\irr}$.
\end{Remark}

\begin{Proposition}\label{Proposition: universal trace}
Assume the truthfulness of Hypothesis \ref{Hyp: associated Gal. rep}. \begin{enumerate}
    \item[(i)] For any $\bfitx\in \calX^{\cl}$, there is an associated Galois representation \[
        \rho_{\bfitx}: \Gal_{\Q, \Sbad}\xrightarrow{\rho_{\bfitx}^{\spin}} \GSpin_{2g+1}(\overline{\Q}_p) \xrightarrow{\spin} \GL_{2^g}(\overline{\Q}_p)
    \]
    that satisfies the properties in Hypothesis \ref{Hyp: associated Gal. rep}.
    
    \item[(ii)] There is a universal determinant \[
        \Det^{\univ}: \Gal_{\Q, \Sbad} \rightarrow \scrO_{\calE_0^{\irr}}^+(\calE_0^{\irr})
    \]
    of dimension $2^g$ such that, for any $\bfitx\in \calX^{\cl}$, the specialisation $\Det^{\univ}|_{\bfitx}$ (notation as in (\ref{eq: specialisation of pseudocharacter})) coincides with $\det\rho_{\bfitx}$.
\end{enumerate}
\end{Proposition}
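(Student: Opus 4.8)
Existence is bookkeeping. By the definition of the $p$-stabilised classical locus, a point $\bfitx\in\calX^{\cl}$ of weight $k$ arises as a $p$-stabilisation (of slope $<h_k$) of some $\bbT^{\tame}$-eigenclass $[\mu]\in H_{\tame,\Par,k}^{\alg,\tol}$ whose attached $\rho_{[\mu]}^{\spin}$ is irreducible, and I would simply set $\rho_{\bfitx}:=\rho_{[\mu]}=\spin\circ\rho_{[\mu]}^{\spin}$, which satisfies the list in Hypothesis \ref{Hyp: associated Gal. rep} by construction. The only thing worth recording is that $\det\rho_{\bfitx}$ depends only on $\bfitx$, not on the auxiliary $[\mu]$: since $\Lambda_p$ (hence the passage to $p$-stabilisations) is $\bbT^p$-equivariant, the restriction $\lambda_{[\mu]}|_{\bbT^p}$ equals $\lambda_{\bfitx}|_{\bbT^p}$, and by Hypothesis \ref{Hyp: associated Gal. rep}(i) the characteristic polynomials $\charpoly(\rho_{[\mu]}(\Frob_\ell))=\lambda_{[\mu]}(P_{\Hecke,\ell})$ for $\ell\nmid pN$ pin down the semisimplification of $\rho_{[\mu]}$ via Brauer--Nesbitt and Chebotarev. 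In particular $\det\rho_{\bfitx}$ is a well-defined invariant of $\bfitx$.

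\textbf{Part (ii), local picture.} The plan is to build $\Det^{\univ}$ on an admissible affinoid cover and glue. Cover $\calE_0^{\irr}$ by affinoids $\calV=\Spa(B,B^+)$ arising from slope data $(\calU,h)$, so that $B$ is a reduced quotient of a base change of $\bbT_{\Par,\calU}^{\red,h}$; since $\calE_0^{\irr}$ is a union of irreducible components of the equidimensional space $\calE_0$ and, by definition, the Zariski closure of $\calX^{\cl}$, the set $\calX^{\cl}\cap\calV$ is Zariski dense in each such $\calV$. For every $\ell\nmid pN$, the Hecke polynomial $P_{\Hecke,\ell}(Y)\in\bbT^p[Y]$ maps to a monic degree-$2^g$ polynomial $P_{\calV,\ell}(Y)\in B[Y]$, and its coefficients are power-bounded, i.e.\ lie in $\scrO_{\calE_0^{\irr}}^+(\calV)$, because the prime-to-$p$ Hecke operators preserve an integral lattice in the overconvergent parabolic cohomology and hence are sections of $\scrT_{\Par}^{\red,\circ}$. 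By Part (i) and Hypothesis \ref{Hyp: associated Gal. rep}(i), for every $\bfitx\in\calX^{\cl}\cap\calV$ the specialisation of $P_{\calV,\ell}$ at $\bfitx$ is $\charpoly(\rho_{\bfitx}(\Frob_\ell))(Y)$.

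\textbf{Part (ii), interpolation and gluing.} Because $B$ is reduced and $\calX^{\cl}\cap\calV$ is Zariski dense, the intersection of the maximal ideals of the points of $\calX^{\cl}\cap\calV$ is zero, so evaluation embeds $B\hookrightarrow\prod_{\bfitx\in\calX^{\cl}\cap\calV}\overline{k_{\bfitx}}$, with $\scrO^+$ landing in the product of the valuation rings. On the target the product $\prod_{\bfitx}\det\rho_{\bfitx}$ is a continuous $2^g$-dimensional determinant of $\Gal_{\Q,\Sbad}$, and the key claim is that it factors through $\scrO_{\calE_0^{\irr}}^+(\calV)$. Here I would invoke the interpolation formalism of Bella\"iche--Chenevier (\cite[\S 7.1]{Bellaiche-Chenevier-book}) together with Chenevier's theory of determinants (\cite{Chenevier-2014}): the Frobenius elements $\Frob_\ell$, $\ell\nmid pN$, together with their conjugates form a dense subset of $\Gal_{\Q,\Sbad}$, so a continuous (conjugation-invariant) determinant is determined by the polynomials $D(1+Y\Frob_\ell)$; conversely, from the compatible family $\{P_{\calV,\ell}\}_{\ell\nmid pN}$ of monic polynomials over $\scrO_{\calE_0^{\irr}}^+(\calV)$, whose specialisations at the Zariski dense set $\calX^{\cl}\cap\calV$ are the characteristic polynomials of $\rho_{\bfitx}(\Frob_\ell)$, one produces a continuous determinant $D_{\calV}\colon\Gal_{\Q,\Sbad}\to\scrO_{\calE_0^{\irr}}^+(\calV)$ of dimension $2^g$ with $D_{\calV}|_{\bfitx}=\det\rho_{\bfitx}$ for all $\bfitx\in\calX^{\cl}\cap\calV$. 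Such $D_{\calV}$ is unique with this property (reducedness plus Zariski density), so the local determinants agree on overlaps and glue to a continuous determinant $\Det^{\univ}\colon\Gal_{\Q,\Sbad}\to\scrO_{\calE_0^{\irr}}^+(\calE_0^{\irr})$ of dimension $2^g$ with $\Det^{\univ}|_{\bfitx}=\det\rho_{\bfitx}$ for every $\bfitx\in\calX^{\cl}$.

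\textbf{Main obstacle.} The delicate step is the interpolation above: upgrading ``the characteristic polynomials of the $\Frob_\ell$ glue into global power-bounded functions'' to ``the full $2^g$-dimensional determinant of $\Gal_{\Q,\Sbad}$ glues to a map into $\scrO^+$''. This requires care with topologies---completeness of $B$, closedness of the subring of integral elements, and uniform continuity of $\gamma\mapsto\big(\det(1+Y\rho_{\bfitx}(\gamma))\big)_{\bfitx}$, which one extracts by factoring through finite quotients of $\Gal_{\Q,\Sbad}$ via Chebotarev---and this is precisely where the ``determinant'' formalism of \cite{Chenevier-2014}, rather than bare pseudocharacters, is the convenient tool (note $2^g!$ is invertible in the $\Q_p$-algebras in play, so the two notions coincide, but determinants behave better in families). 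A secondary, more elementary point that still needs to be carried out carefully is the power-boundedness (equivalently: integrality as sections of $\scrT_{\Par}^{\red,\circ}$) of the Hecke operators away from $p$ on the overconvergent parabolic cohomology, together with the Zariski density of $\calX^{\cl}$ in each member of the affinoid cover rather than merely in $\calE_0^{\irr}$ as a whole.
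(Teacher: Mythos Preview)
Your argument is correct, and Part (i) is identical to the paper's. For Part (ii) you and the paper share the same core idea---embed the ring of functions into $\prod_{\bfitx\in\calX^{\cl}}\C_p$ via evaluation, observe that $\prod_{\bfitx}\det\rho_{\bfitx}$ is a continuous determinant on the product, and use density of Frobenii together with a closedness statement to factor it through the subring---but the paper organises this more economically. Rather than working affinoid by affinoid and gluing, the paper works \emph{globally} on $\calE_0^{\irr}$: it invokes \cite[Corollary 5.4.4]{Johansson-Newton} to conclude that $\scrO_{\calE_0^{\irr}}^+(\calE_0^{\irr})$ is \emph{compact}, so the evaluation map $\Phi\colon\scrO_{\calE_0^{\irr}}^+(\calE_0^{\irr})\hookrightarrow\prod_{\bfitx}\C_p$ is a homeomorphism onto a closed subset, and then $\Det^{\univ}:=\Phi^{-1}\circ\Det$ is defined in one stroke. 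This bypasses both of the points you flag as obstacles: compactness of $\scrO^+$ delivers the needed closedness directly (no separate analysis of completeness or uniform continuity), and no gluing is required, so you never need Zariski density of $\calX^{\cl}$ on each member of a cover. Your local-then-glue route works, and has the mild advantage of making the role of the Hecke polynomials $P_{\Hecke,\ell}$ more explicit, but the paper's global compactness argument is shorter and avoids the bookkeeping you correctly identify as the delicate part.
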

\begin{proof}
The first assertion is easy. Let $\bfitx\in \calX^{\cl}$. It corresponds to a $p$-stabilisation class $[\mu]^{(p)}\in H_{\tame, \Par, k}^{\alg, \tol, \leq h}$. That is, there is a $\bbT^{\tame}$-eigenclass $[\mu]\in H_{\tame, \Par, k}^{\alg, \tol}$ such that $[\mu]^{(p)}$ is a $p$-stabilisation of $[\mu]$. By Hypothesis \ref{Hyp: associated Gal. rep}, the class $[\mu]$ is associated with a Galois representation with desired properties. Then, we define $\rho_{\bfitx}^{\spin} := \rho_{[\mu]}^{\spin}$ and $\rho_{\bfitx} := \rho_{[\mu]}$.\\ %\textcolor{blue}{(well-defined?)}

For the second assertion, we follow the proof of \cite[Proposition 7.1.1]{Chenevier-2004} (see also \cite[Example 2.32]{Chenevier-2014}). Consider the morphism \[
    \Phi: \scrO_{\calE_0^{\irr}}^+(\calE_0^{\irr}) \rightarrow \prod_{\bfitx\in \calX^{\cl}}\C_p, \quad f\mapsto (f(\bfitx))_{\bfitx\in \calX^{\cl}}.
\] Equipped $\prod_{\bfitx\in \calX^{\cl}}\C_p$ with the product topology, one sees that $\Phi$ is continuous. We claim that $\Phi(\scrO^+_{\calE_0^{\irr}}(\calE_0^{\irr}))$ is homeomorphic to $\scrO^+_{\calE_0^{\irr}}(\calE_0^{\irr})$ and is closed in $\prod_{\bfitx\in \calX^{\cl}}\C_p$. Indeed, since $\calX^{\cl}$ is Zariski dense in the reduced space $\calE_0^{\irr}$, the map $\Phi$ is injective. Apply \cite[Corollary 5.4.4]{Johansson-Newton}, we know that $\scrO_{\calE_0^{\irr}}^+(\calE_{0}^{\irr})$ is compact and so $\Phi(\scrO_{\calE_0^{\irr}}^+(\calE_0^{\irr}))$ is closed in $\prod_{\bfitx\in \calX^{\cl}}\C_p$.

\vspace{2mm}

On the other hand, we have a continuous map \[
    \Det : \Gal_{\Q, \Sbad} \rightarrow \prod_{\bfitx\in \calX^{\cl}} \C_p, \quad \sigma \mapsto (\det\rho_{\bfitx}(\sigma))_{\bfitx\in \calX^{\cl}}.
\] One checks easily that $\Det$ is a determinant of dimension $2^g$, in fact, the determinant of a representation $\Gal_{\Q}\rightarrow \GL_{2^g}(\prod_{\bfitx\in \calX^{\cl}}\C_p)$. Hypothesis \ref{Hyp: associated Gal. rep} and $\image \Phi$ being closed in $\prod_{\bfitx\in \calX^{\cl}}\C_p$ imply that $\image\Det \subset \image \Phi$. Hence, we define \[
    \Det^{\univ}:= \Phi^{-1}\circ \Det: \Gal_{\Q, \Sbad} \rightarrow \scrO^+_{\calE_0^{\irr}}(\calE_0^{\irr}).
\] Since $\Phi$ is injective and $\Det$ is a determinant of dimension $2^g$, $\Det^{\univ}$ is as desired.
\end{proof}

\begin{Theorem}\label{Theorem: refined family on the cuspidal eigenvariety}
There exists a subset $\calX_{\heartsuit}^{\cl}\subset \calX^{\cl}$ which is Zariski dense in $\calE_0^{\irr}$, $2^g$ analytic functions $\alpha_1, ..., \alpha_{2^g}\in \scrO_{\calE_0^{\irr}}(\calE_0^{\irr})$ and $2^g$ analytic functions $F_1, ..., F_{2^g}\in \scrO_{\calE_0^{\irr}}(\calE_0^{\irr})$ such that \[
    (\calE_0^{\irr}, \Det^{\univ}, \calX_{\heartsuit}^{\cl}, \{\alpha_i: i=1, ..., 2^g\}, \{F_i: i=1, ..., 2^g\})
\]
is a refined family of Galois representations. 
\end{Theorem}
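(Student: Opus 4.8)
The plan is to construct the data $\calX_{\heartsuit}^{\cl}$, $\{\alpha_i\}$, $\{F_i\}$ and then verify, one by one, the conditions (a)--(d), (i)--(iv) and (*) of Definition~\ref{Definition: refined families} for the tuple $(\calE_0^{\irr},\Det^{\univ},\calX_{\heartsuit}^{\cl},\{\alpha_i\},\{F_i\})$. Condition (a) is already in hand: $(\calE_0^{\irr},\Det^{\univ})$ is a family of representations of dimension $2^g$ over $\Spa(\Q_p,\Z_p)$ by Proposition~\ref{Proposition: universal trace}, $\calE_0^{\irr}$ being reduced and separated. For the weights, I would use the weight map $\wt$: writing the universal character of $\calW$ as $(\kappa_1,\dots,\kappa_g)$ in the notation of \S\ref{subsection: pairing}, each $\kappa_i\colon\Z_p^\times\to\scrO_{\calW}(\calW)^\times$ has a derivative at $1$, a global analytic function $\partial_i\in\scrO_{\calW}(\calW)$ with $\partial_i(k)=k_i$ at a classical weight $k=(k_1,\dots,k_g)$. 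Numbering the $2^g$ subsets $S\subseteq\{1,\dots,g\}$ compatibly with the ordering of the Hodge--Tate weights fixed in Hypothesis~\ref{Hyp: associated Gal. rep}(ii), I set $\alpha_S:=\sum_{i\in S}\bigl((g+1-i)+\partial_i\bigr)$ and let $\alpha_1,\dots,\alpha_{2^g}\in\scrO_{\calE_0^{\irr}}(\calE_0^{\irr})$ be the pullbacks of the $\alpha_S$ along $\wt$; this gives (c). The continuous characters $\chi_S:=\bigotimes_{i\in S}\bigl((x\mapsto x^{g+1-i})\,\kappa_i\bigr)$, pulled back along $\wt$, have derivative $\alpha_S$ at $1$ and specialise at a classical point of weight $k$ to $x\mapsto x^{\alpha_S}$, which is (*).

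Next I would produce the $F_i$ from the Hecke action. Let $\psi\colon\bbT\to\scrO_{\calE_0^{\irr}}(\calE_0^{\irr})$ be the canonical morphism attached to the eigenvariety, so every Hecke operator maps to a global analytic function and $\psi(U_p)$ is a unit because $U_p$ acts invertibly on finite-slope cohomology. At a point $\bfitx\in\calX^{\cl}$ arising from a $p$-stabilisation $[\mu]^{(p)}$ of a $\bbT^{\tame}$-eigenclass $[\mu]$, Hypothesis~\ref{Hyp: Tp and Up eigenvalues}(ii) writes the eigenvalues of the operators $U_{p,0}^{\iota(i)}$ on $[\mu]^{(p)}$ as explicit Laurent monomials, up to explicit powers of $p$, in $\lambda_1,\dots,\lambda_{g+1}$, and Hypothesis~\ref{Hyp: associated Gal. rep}(ii) identifies $\lambda_1,\dots,\lambda_{g+1}$ with the distinguished crystalline Frobenius quantities $\varphi_1,\dots,\varphi_{g+1}$ and expresses each of the $2^g$ Frobenius eigenvalues $\varphi_j$ as a monomial in them. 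Inverting these relations and absorbing the weight-dependent powers of $p$ into the $p^{\alpha_j}$-normalisation — possible precisely because the two hypotheses are compatible — one obtains global analytic functions $F_1,\dots,F_{2^g}\in\scrO_{\calE_0^{\irr}}(\calE_0^{\irr})$, giving (d), characterised by $\bigl(p^{\alpha_1(\bfitx)}F_1(\bfitx),\dots,p^{\alpha_{2^g}(\bfitx)}F_{2^g}(\bfitx)\bigr)=(\varphi_1,\dots,\varphi_{2^g})$ for every $\bfitx\in\calX^{\cl}$, which is the ordered-tuple equality of (iii). I note that the strict Iwahori level at $p$ is what makes the ordering of the Frobenius eigenvalues — and hence the individual $\alpha_j$ and $F_j$ — globally single-valued on $\calE_0^{\irr}$; checking this carefully is where Hypothesis~\ref{Hyp: Tp and Up eigenvalues} does its real work.

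Now I would let $\calX_{\heartsuit}^{\cl}\subseteq\calX^{\cl}$ consist of those $\bfitx$ whose weight is regular enough that $\alpha_1(\bfitx)<\cdots<\alpha_{2^g}(\bfitx)$ and for which the Frobenius eigenvalues $\varphi_j$, equivalently the values $p^{\alpha_j(\bfitx)}F_j(\bfitx)$, are pairwise distinct; the second condition excludes a proper Zariski-closed locus of $\calE_0^{\irr}$, and membership of a point's weight in the first region will be automatic once that weight lies in the weight-locus of some $\calQ_C$ below. With this choice, (ii) and the distinctness clause of (iii) hold by construction, crystallinity of $\rho_{\bfity}|_{\Gal_{\Q_p}}$ at $\bfity\in\calX_{\heartsuit}^{\cl}$ is Hypothesis~\ref{Hyp: associated Gal. rep}(ii), and $\alpha_i(\bfity)\in\Z$ as a sum of integers. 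For (i), I would invoke Sen theory for the family: the Sen polynomial of $\Det^{\univ}|_{\Gal_{\Q_p}}$ is monic of degree $2^g$ with coefficients in $\scrO_{\calE_0^{\irr}}(\calE_0^{\irr})$, and by Hypothesis~\ref{Hyp: associated Gal. rep}(ii) together with the construction of the $\alpha_i$ it agrees with $\prod_{j=1}^{2^g}(T-\alpha_j)$ on $\calX^{\cl}$, which is Zariski dense in $\calE_0^{\irr}$ by definition of $\calE_0^{\irr}$; as $\calE_0^{\irr}$ is reduced, the two polynomials coincide, so the Hodge--Tate--Sen weights at any point of $\calE_0^{\irr}$ are $\alpha_1,\dots,\alpha_{2^g}$, which is (i). The density of $\calX_{\heartsuit}^{\cl}$ needed for (b) I fold into the argument for (iv).

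Finally, for the accumulation property (iv) — and for (b) — I would fix $\bfity\in\calX_{\heartsuit}^{\cl}$, an affinoid neighbourhood $\calU\subseteq\calE_0^{\irr}$ of $\bfity$ on which the slope is $\leq h$ (such $\calU$ forming a basis of neighbourhoods), and $C\in\Z_{>0}$. The dominant algebraic weights whose Hodge--Tate weights meet the gap inequalities defining $\calQ_C$ form a full-dimensional region, hence a Zariski-dense subset of $\wt(\calU)$; among them, those sufficiently regular have classicality bound $h_k>h$ because the bound $h_k$ of \cite[Theorem 4.3.2]{Wu-pairing} grows with the regularity of $k$. For such a weight $k$, \emph{loc.\ cit.} identifies the slope-$\leq h$ overconvergent parabolic cohomology with its classical counterpart, so the fibre of $\wt|_{\calU}$ over $k$ consists of $p$-stabilisations of $\bbT^{\tame}$-eigenclasses in $H_{\tame,\Par,k}^{\alg,\tol}$, i.e.\ of points of $\calX^{\cl}$ (the irreducibility requirement being inherited from $\calE_0^{\irr}$); after discarding the proper Zariski-closed locus where two Frobenius eigenvalues collide, these points lie in $\calX_{\heartsuit}^{\cl}\cap\calQ_C$, which is therefore Zariski dense in $\calU$. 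Letting $C$ and $\calU$ vary yields (iv), and taking $\bfity$ to range over $\calX^{\cl}$ yields (b); this completes the verification. The main obstacle I anticipate is exactly this last step: one must reconcile the need for very regular weights (to satisfy the $\calQ_C$ gap conditions) with the need to keep the slope below the growing classicality bound $h_k$, and — more seriously — one must rule out that some irreducible component of $\calE_0^{\irr}$ lies entirely inside the ``repeated Frobenius eigenvalue'' locus; a secondary subtlety is that (i) rests on having a well-behaved Sen polynomial for the family $\Det^{\univ}$.
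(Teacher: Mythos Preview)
Your approach is essentially the same as the paper's: construct the $\alpha_i$ from the weight characters pulled back along $\wt$, construct the $F_i$ as (renormalised) images of the $U_{p,0}^{\iota(i)}$ via Hypotheses~\ref{Hyp: associated Gal. rep}(ii) and~\ref{Hyp: Tp and Up eigenvalues}, and define $\calX_{\heartsuit}^{\cl}$ by the two conditions of strictly increasing Hodge--Tate weights and distinct Frobenius eigenvalues. The paper's proof is in fact sketchier than yours on (i), (iv), and (b) --- it simply asserts that the relevant conditions are ``open on weights'' or ``open on $\calE_0^{\irr}$'' --- so the concerns you flag at the end (a component lying entirely in the repeated-eigenvalue locus, Sen-polynomial well-definedness for the family) are not addressed there either.
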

\begin{proof}
For any $p$-adic weight $\kappa = (\kappa_1, ..., \kappa_g)$, define an ordering of functions on $\Z_p^\times$ via \[
    (\alpha_1, ..., \alpha_{2^g}):=(0 , \alpha_g', ..., \alpha_1', \alpha_g' + \alpha_{g-1}', ..., \alpha_g'+\alpha_1', \alpha_{g-1}'+\alpha_{g-2}', ..., \alpha_2'+\alpha_1', ..., \alpha_g'+\cdots + \alpha_1'), 
\] where $\alpha_i' = (g+1-i)+\kappa_i$ is the character $a\mapsto \kappa_i(a)a^{g+1-i}$ for every $a\in \Z_p^\times$. We can view $\alpha_j$'s as functions on $\calE_0^{\irr}$ by composing with the weight map $\wt$. Obviously from this definition, for any $\bfitx\in \calX^{\cl}$, the functions $\alpha_j$'s provide an ordering of the Hodge--Tate weight of $\rho_{\bfitx}$ in Hypothesis \ref{Hyp: associated Gal. rep} (iii).

\vspace{2mm}

Define\[
    \calX_{\heartsuit}^{\cl} := \left\{\bfitx\in \calX^{\cl}: \begin{array}{l}
        0 = \alpha_1(\bfitx) < \alpha_2(\bfitx) < \cdots < \alpha_{2^g}(\bfitx)  \\
        \text{eigenvalues of the crystalline Frobenius acting on $\D_{\cris}({\rho}_{\bfitx}|_{\Gal_{\Q_p}})$ are distinct} 
    \end{array}\right\}.
\] Observe that $\calX_{\heartsuit}^{\cl}$ is Zariski dense in $\calE_0^{\irr}$ since $\calX^{\cl}$ is Zariski dense in $\calE_0^{\irr}$ and the first condition defining $\calX_{\heartsuit}^{\cl}$ is an open condition on weights while the second condition is an open condition on $\calE_0^{\irr}$. We claim that $\calX_{\heartsuit}^{\cl}$ satisfies condition (iv) in Definition \ref{Definition: refined families}. That is, for any $C\in \Z_{>0}$, we have to show that the set \[
    \calX_{\heartsuit, C}^{\cl} : = \left\{\bfitx\in \calX_{\heartsuit}^{\cl}: \begin{array}{l}
        \alpha_{i+1}(\bfitx) - \alpha_{i}(\bfitx) > C (\alpha_{i}(\bfitx) - \alpha_{i-1}(\bfitx)) \text{ for }i=2, ..., 2^g-1  \\
        \alpha_2(\bfitx) - \alpha_1(\bfitx) >C  
    \end{array}\right\}
\] satisfies that, for any basis of affinoid neighbourhoods $\calV$ of $\bfitx$, $\calV\cap \calX_{\heartsuit, C}^{\cl}$ is Zariski dense in $\calV$. However, this follows from that the condition defining $\calX_{\heartsuit, C}^{\cl}$ is an open condition on the weights. 

\vspace{2mm}

Now, for any $\bfitx\in \calX_{\heartsuit}^{\cl}$, the associated representation $\rho_{\bfitx}$ is crystalline at $p$. Let $\varphi_1(\bfitx), ..., \varphi_{2^g}(\bfitx)$ be eigenvalues of the crystalline Frobenius $\varphi = \varphi_{\cris}$ acting on $\D_{\cris}(\rho_{\bfitx}|_{\Gal_{\Q_p}})$. The order of the eigenvalues $\varphi_i$'s is defined so that it defines a non-critical refinement on $\rho_{\bfitx}$. This is achievable by applying Proposition \ref{Proposition: refinements and triangulations} (ii). Define \[
    F_i(\bfitx) := \varphi_i(\bfitx)/p^{\alpha_i(\bfitx)} \in \C_p.
\] We claim that the collection $\{(F_i(\bfitx))_{i=1, ..., 2^g}\}_{\bfitx\in \calG_{\heartsuit}^{\cl}}$ glue to $2^g$ analytic functions $(F_1, ..., F_{2^g})$ in $\scrO_{\calE_0^{\irr}}(\calE_0^{\irr})$. Let $\lambda_{\bfitx}: \bbT^{\tame} \rightarrow \overline{\Q}_p$ be the eigensystem corresponds to $\bfitx$. Consider \[
    p^{\vartheta}p^{\kappa_{i}'}F_i := \text{ image of the operator $U_{p,0}^{\iota(i)}$ in $\scrO_{\calE_0^{\irr}}(\calE_0^{\irr})$},
\] where \[
    (\kappa_1', ..., \kappa_{2^g}') = (0, \kappa_g, ..., \kappa_1, \kappa_g+\kappa_{g-1}, ..., \kappa_{g}+\kappa_{1}, \kappa_{g-1}+\kappa_{g-2}, ...., \kappa_{2}+\kappa_1, ..., \kappa_g+\cdots+\kappa_1)
\] and $(\kappa_1, ..., \kappa_g) = \wt$ is the weight map. Then, Hypothesis \ref{Hyp: associated Gal. rep} (ii) and Proposition \ref{Prop: Tp and Up eigenvalues} imply the desired result (see also \cite[Proposition 7.5.13]{Bellaiche-Chenevier-book}). 
\end{proof}

\begin{Remark}\label{Remark: eigenvalues of crystalline Frob}
\normalfont Recall that we have ordered the eigenvalues of the crystalline Frobenius $\varphi$ so that they satisfy \[
        (\varphi_1, ..., \varphi_{2^g}) = \varphi_1(1, \varphi_2', ..., \varphi_{g+1}', \varphi_2'\varphi_3', ..., \varphi_{g}'\varphi_{g+1}', ..., \varphi_{2}'\cdots \varphi_{g+1}').
\] On the other hand, recall that $\Weyl^H$ is a set of representatives of $\Weyl_H\backslash \Weyl_{\GSp_{2g}}$, where $\Weyl_{H} \simeq \bfSigma_g$. Observe that $\diag(\one_g, p\one_g)$ is stable under the action of $\bfSigma_g$, thus the action of $\Weyl^H$ on $T_{p, 0}$ only depends on the action of $(\Z/2\Z)^g$. Combining everything together, we have the relation \[
    (F_1, ..., F_{2^g}) = F_1(1, F_2', ..., F_{g+1}', F_{2}'F_{3}', ..., F_{g}'F_{g+1}', ..., F_2'\cdots F_{g+1}').
\] In particular, $F_2, ..., F_{g+1}$ are divisible by $F_1$. 
\end{Remark}

\subsection{Local and global Galois deformations}\label{subsection: Galois deformations}
We keep the notations in the previous subsection. Fix $\bfitx\in \calX_{\heartsuit}^{\cl}$ with $\wt(\bfitx) = k = (k_1, ..., k_g)$ and we write \[
    \rho_{\bfitx}: \Gal_{\Q} \xrightarrow{\rho_{\bfitx}^{\spin}} \GSpin_{2g+1}(\overline{\Q}_p) \xrightarrow{\spin} \GL_{2^g}(\overline{\Q}_p)
\] for the Galois representation attached to $\bfitx$, given by Proposition \ref{Proposition: universal trace}.
We fix a large enough finite field extension $k_{\bfitx}$ of $\Q_p$ such that $k_{\bfitx}$ contains the residue field at $\bfitx$ and  $\rho_{\bfitx}^{\spin}$ takes values in $\GSpin_{2g+1}(k_{\bfitx})$. We also assume that $k_{\bfitx}$ contains all eigenvalues of the Frobenii. 

\vspace{2mm}

Let now $\Ar$ be the category of local artinian $k_{\bfitx}$-algebras whose residue field is $k_{\bfitx}$. We denote by $\bbF_{\bullet}^{\bfitx}$ the refinement of $\rho_{\bfitx}|_{\Gal_{\Q_p}}$ induced by the refined family defined in Theorem \ref{Theorem: refined family on the cuspidal eigenvariety}. We also denote by $\delta = (\delta_1, ..., \delta_{2^g})$ the parameter attached to the triangulation associated with $\bbF_{\bullet}^{\bfitx}$. Notice that the relation of the eigenvalues of crystalline Frobenius and the Hodge--Tate weight implies that the parameter $\delta$ satisfies \[
    (\delta_1, ..., \delta_{2^g}) = \delta_1(1, \delta_2', ..., \delta_{g+1}', \delta_{2}'\delta_{3}', ..., \delta_{g}'\delta_{g+1}', ..., \delta_{2}'\cdots\delta_{g+1}')
\] for some continuous characters $\delta_2', ..., \delta_{g+1}'$ such that $\delta_i = \delta_1\delta_i'$ for all $i=2, ..., g+1$.

\paragraph{Local Galois deformations at $p$.} We shall consider two deformation problems at $p$: \begin{enumerate}
    \item[(i)] The deformation problem \[
         \scrD_{\bfitx, \bbF_{\bullet}^{\bfitx}, p}^{\spin}: \Ar \rightarrow \Sets,
    \] sending each $A\in \Ar$ to the isomorphism classes of representations $\rho_A^{\spin} : \Gal_{\Q_p} \rightarrow \GSpin_{2g+1}(A)$ with a triangulation $\Fil_{\bullet} \D_{\rig}(\spin\circ \rho_A^{\spin})$ such that \begin{enumerate}
        \item[$\bullet$] $\rho_{A}^{\spin} \otimes_A k_{\bfitx} \simeq \rho_{\bfitx}^{\spin}|_{\Gal_{\Q_p}}$;
        \item[$\bullet$] $(\spin \circ \rho_A^{\spin}, \Fil_{\bullet}\D_{\rig}(\spin \circ\rho^{\spin}_A))\in \scrD_{\rho_{\bfitx}|_{\Gal_{\Q_p}}, \bbF_{\bullet}^{\bfitx}}(A)$ and write $\delta_A = (\delta_{A, 1}, ..., \delta_{A, 2^g})$ for the associated parameter;
        \item[$\bullet$] the parameter $\delta_A$ satisfies \[
            (\delta_{A, 1}, ..., \delta_{A, 2^g}) = \delta_{A, 1}( 1, \delta_{A, 2}', ..., \delta_{A, g+1}', \delta_{A, 2}'\delta_{A, 3}', ..., \delta_{A, 2}'\delta_{A, g+1}', \delta_{A, 3}'\delta_{A, 4}', ..., \delta_{A, g}'\delta_{A, g+1}', ..., \delta_{A,2}'\cdots\delta_{A, g+1}')
        \] for some continuous characters $\delta_{A, 2}', ..., \delta_{A, g+1}'$;
        \item[$\bullet$] $\det\spin \circ \rho_A^{\spin} = \det\rho_{\bfitx}|_{\Gal_{\Q_p}}$
    \end{enumerate}
    
    \item[(ii)] The deformation problem \[
        \scrD_{\bfitx, f, p}^{\spin}: \Ar \rightarrow \Sets,
    \] sending each $A\in \Ar$ to the isomorphism classes of representations $\rho_A^{\spin}: \Gal_{\Q_p}\rightarrow \GSpin_{2g+1}(A)$ such that \begin{enumerate}
        \item[$\bullet$] $\rho_A^{\spin} \otimes_A k_{\bfitx} \simeq \rho^{\spin}_{\bfitx}|_{\Gal_{\Q_p}}$;
        \item[$\bullet$] the $(\varphi, \Gamma)$-module $\D_{\rig}(\spin \circ\rho^{\spin}_A)$ is crystalline in the sense of \cite[Definition 2.2.10]{Bellaiche-Chenevier-book} whose eigenvalues $(\varphi_{A, 1}, ..., \varphi_{A, 2^g})$ of the crystalline Frobenius satisfy \begin{align*}
            %& (\alpha_{A, 1}, ..., \alpha_{A, 2^g}) \\
            %& \quad = (0, \alpha_{A, 1}, ..., \alpha_{A, g+1}, \alpha_{A, 2}+\alpha_{A, 3}, ..., \alpha_{A, 2}+\alpha_{A, g+1}, \alpha_{A, 3}+\alpha_{A, 4}, ..., \alpha_{A, g}+\alpha_{A, g+1}, ..., \alpha_{A, 2}+\cdots+\alpha_{A, g+1})\\
            & (\varphi_{A, 1}, ..., \varphi_{A, 2^g}) = \varphi_{A, 1}(1, \varphi_{A, 2}', ..., \varphi_{A, g+1}', \varphi_{A, 2}'\varphi_{A, 3}', ..., \varphi_{A, g}'\varphi_{A, g+1}', ..., \varphi_{A, 2}'\cdots \varphi_{A, g+1}'),
        \end{align*}order chosen the same as for $\varphi_i$'s;
        \item[$\bullet$] $\det \spin\circ \rho_A^{\spin} = \det\rho_{\bfitx}|_{\Gal_{\Q_p}}$
    \end{enumerate}
\end{enumerate}

Consider \[
    L_p' := \ker\left(H^1(\Gal_{\Q_p}, \ad^0\rho^{\spin}_{\bfitx}) \rightarrow H^1(\Gal_{\Q_p}, \ad^0\rho^{\spin}_{\bfitx} \otimes_{k_{\bfitx}} \B_{\cris})\right),
\] where $\B_{\cris}$ is Fontaine's ring of crystalline periods. It is well-known that $L_p'$ defines the tangent space of the crystalline deformation problem for $\rho^{\spin}_{\bfitx}$ with fixed determinant. Consequently, the tangent space $\scrD_{\bfitx, f, p}^{\spin}(k_{\bfitx}[\varepsilon])$, where $\varepsilon$ is a variable such that $\varepsilon^2 = 0$, of $\scrD_{\bfitx, f, p}^{\spin}$ defines a subspace of $L_p'$. Thus, we define \begin{equation}\label{eq: local Selmer factor at p}
    L_p := \scrD_{\bfitx, f, p}^{\spin}(k_{\bfitx}[\varepsilon]) \subset L_p'.
\end{equation}  

\paragraph{Local Galois deformations at $N$.} For any $\ell | N$, we consider the following deformation problem \[
    \scrD_{\bfitx, \ell}^{\spin}: \Ar \rightarrow \Sets
\] sending each $A\in \Ar$ to the isomorphism classes of representations $\rho_A^{\spin}: \Gal_{\Q_{\ell}} \rightarrow \GSpin_{2g+1}(A)$ such that \begin{enumerate}
    \item[$\bullet$] $\rho_A^{\spin} \otimes_{A}k_{\bfitx} \simeq \rho_{\bfitx}^{\spin}|_{\Gal_{\Q_{\ell}}}$;
    \item[$\bullet$] $\rho_{A}^{\spin}|_{I_{\ell}} \simeq \rho_{\bfitx}^{\spin}|_{I_{\ell}} \otimes_{k_{\bfitx}}A$
    \item[$\bullet$] $\det \spin\circ \rho_A^{\spin} = \det\rho_{\bfitx}|_{\Gal_{\Q_{\ell}}}$
\end{enumerate} Here, $I_{\ell}\subset \Gal_{\Q_{\ell}}$ denotes the inertia subgroup. Then, one sees that the tangent space $\scrD_{\bfitx, \ell}(k_{\bfitx}[\varepsilon])$ of $\scrD_{\bfitx, \ell}$ is a $k_{\bfitx}$-subspace of $H^1(\Gal_{\Q_{\ell}}, \ad^0\rho^{\spin}_{\bfitx})$. We consequently define \begin{equation}\label{eq: local factor for Selmer group at N}
    L_{\ell} := \scrD_{\bfitx, \ell}(k_{\bfitx}[\varepsilon]) \subset H^1(\Gal_{\Q_{\ell}}, \ad^0\rho^{\spin}_{\bfitx}).
\end{equation} We learnt the following lemma from P. Allen.

\begin{Lemma}\label{Lemma: Selmer condition at l is trivial}
Under the assumption of Hypothesis \ref{Hyp: spin functoriality}, we have \[
    L_{\ell} = H^1(\Gal_{\Q_{\ell}}, \ad^0\rho_{\bfitx}^{\spin}).
\]
\end{Lemma}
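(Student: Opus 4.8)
\emph{Step 1: recast $L_{\ell}$ as a kernel.} Forgetting the inertia condition, the deformation problem behind $\scrD_{\bfitx,\ell}^{\spin}$ (deformations of $\rho_{\bfitx}^{\spin}|_{\Gal_{\Q_{\ell}}}$ with fixed determinant of the spin representation) has tangent space $H^1(\Gal_{\Q_{\ell}},\ad^0\rho_{\bfitx}^{\spin})$, and the extra requirement $\rho_A^{\spin}|_{I_{\ell}}\simeq\rho_{\bfitx}^{\spin}|_{I_{\ell}}\otimes_{k_{\bfitx}}k_{\bfitx}[\varepsilon]$ says exactly that the associated cocycle becomes a coboundary on $I_{\ell}$. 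Hence $L_{\ell}=\ker\!\big(H^1(\Gal_{\Q_{\ell}},\ad^0\rho_{\bfitx}^{\spin})\xrightarrow{\mathrm{res}}H^1(I_{\ell},\ad^0\rho_{\bfitx}^{\spin})\big)$, and the lemma is equivalent to $\mathrm{res}=0$. Now $\mathrm{im}(\mathrm{res})\subset H^1(I_{\ell},\ad^0\rho_{\bfitx}^{\spin})^{\Gal_{\Q_{\ell}}/I_{\ell}}$, and since $\Gal_{\Q_{\ell}}/I_{\ell}\cong\widehat{\Z}$ has cohomological dimension $1$ while $H^{\geq 2}(I_{\ell},-)$ vanishes on $\overline{\Q}_p$-vector spaces (wild inertia is pro-$\ell$, the tame quotient procyclic over $\Z_p$), the Hochschild--Serre spectral sequence for $1\to I_{\ell}\to\Gal_{\Q_{\ell}}\to\widehat{\Z}\to 1$ degenerates and gives $H^2(\Gal_{\Q_{\ell}},\ad^0\rho_{\bfitx}^{\spin})\cong H^1(I_{\ell},\ad^0\rho_{\bfitx}^{\spin})_{\widehat{\Z}}$. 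In particular $H^1(I_{\ell},\ad^0\rho_{\bfitx}^{\spin})^{\widehat{\Z}}$ and $H^2(\Gal_{\Q_{\ell}},\ad^0\rho_{\bfitx}^{\spin})$ have the same dimension, so everything reduces to proving $H^2(\Gal_{\Q_{\ell}},\ad^0\rho_{\bfitx}^{\spin})=0$ for every $\ell\mid N$.

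\emph{Step 2: pass to $\rho_{\bfitx}$.} Because $\Gal_{\Q_{\ell}}$ acts on $\ad^0\rho_{\bfitx}^{\spin}$ through $\GSpin_{2g+1}(\overline{\Q}_p)$, complete reducibility of the reductive group $\GSpin_{2g+1}$ exhibits $\ad^0\rho_{\bfitx}^{\spin}$ as a $\Gal_{\Q_{\ell}}$-stable direct summand of $\mathfrak{gl}_{2^g}\cong\rho_{\bfitx}\otimes\rho_{\bfitx}^{\vee}$; hence $H^2(\Gal_{\Q_{\ell}},\ad^0\rho_{\bfitx}^{\spin})$ is a direct summand of $H^2(\Gal_{\Q_{\ell}},\mathfrak{gl}_{2^g})$, and it suffices to show $H^2(\Gal_{\Q_{\ell}},\mathfrak{gl}_{2^g})=0$. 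By local Tate duality this group is dual to $H^0(\Gal_{\Q_{\ell}},\rho_{\bfitx}^{\vee}\otimes\rho_{\bfitx}(1))=\Hom_{\Gal_{\Q_{\ell}}}(\rho_{\bfitx},\rho_{\bfitx}(1))$. So it is enough to prove $\Hom_{\Gal_{\Q_{\ell}}}(\rho_{\bfitx},\rho_{\bfitx}(1))=0$; everything so far is formal, and this is where the content sits.

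\emph{Step 3: genericity.} Fix a place $w\mid\ell$ of $L$. Restriction--corestriction (in characteristic $0$) gives an injection $\Hom_{\Gal_{\Q_{\ell}}}(\rho_{\bfitx},\rho_{\bfitx}(1))\hookrightarrow\Hom_{\Gal_{L_w}}(\rho_{\bfitx},\rho_{\bfitx}(1))$, so it is enough to kill the right-hand side. By Hypothesis~\ref{Hyp: spin functoriality}, $\rho_{\bfitx}|_{\Gal_{L}}$ is the Galois representation attached to the generic cuspidal automorphic representation $\pi_{\bfitx}$ of $\GL_{2^g}(\A_L)$; being globally generic, $\pi_{\bfitx,w}$ is a generic irreducible representation of $\GL_{2^g}(L_w)$, and by local--global compatibility at $w\nmid p$ (together with purity of the Galois representations attached to cuspidal automorphic representations) the Weil--Deligne representation of $\rho_{\bfitx}|_{\Gal_{L_w}}$ coincides, up to an unramified twist, with $\mathrm{rec}(\pi_{\bfitx,w})$. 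Since twisting $\rho_{\bfitx}$ by a character leaves $\mathfrak{gl}_{2^g}=\End(\rho_{\bfitx})$ unchanged, it remains to observe that a generic representation of $\GL_{2^g}(L_w)$ is a parabolic induction of twisted Steinberg representations attached to \emph{unlinked} Bernstein--Zelevinsky segments, so its Weil--Deligne parameter is a direct sum of blocks on which the monodromy operator acts by the full nilpotent shift; a direct block-by-block computation (on a single block, commuting with the full shift forces any intertwiner into the cyclotomic twist to vanish) shows there is no nonzero Weil--Deligne morphism from $\mathrm{rec}(\pi_{\bfitx,w})$ to $\mathrm{rec}(\pi_{\bfitx,w})(1)$. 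Hence $\Hom_{\Gal_{L_w}}(\rho_{\bfitx},\rho_{\bfitx}(1))=0$, so $H^2(\Gal_{\Q_{\ell}},\ad^0\rho_{\bfitx}^{\spin})=0$, and therefore $L_{\ell}=H^1(\Gal_{\Q_{\ell}},\ad^0\rho_{\bfitx}^{\spin})$.

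\emph{Main obstacle.} The only non-formal step is the last one: deducing $\Hom_{\Gal_{L_w}}(\rho_{\bfitx},\rho_{\bfitx}(1))=0$ from genericity. This needs (i) the precise normalization in local--global compatibility for $\GL_{2^g}/L$ at $w\nmid p$, (ii) that the Weil--Deligne parameter is pure, so the computation is insensitive to Frobenius-semisimplification, and (iii) the Bernstein--Zelevinsky classification of generic representations by unlinked segments. All three are standard in this context, but they are exactly where an automorphic input is used; the cohomological dévissage of Step 1, the complete-reducibility argument of Step 2, and local Tate duality are routine.
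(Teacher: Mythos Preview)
Your proof is correct and follows essentially the same route as the paper: reduce to $H^2(\Gal_{\Q_{\ell}},\ad^0\rho_{\bfitx}^{\spin})=0$, apply Tate duality, pass to a place $w\mid\ell$ of $L$ via restriction--corestriction, and use genericity of $\pi_{\bfitx,w}$. The only cosmetic differences are that the paper uses the local Euler characteristic (rather than Hochschild--Serre) to get from $H^1_{\mathrm{unr}}=H^1$ to $H^2=0$, and that it outsources your Step~3 computation to \cite[Lemma 1.3.2]{BLGGT14} instead of unpacking the Bernstein--Zelevinsky argument; also, the paper only uses the inclusion $H^1_{\mathrm{unr}}\subset L_{\ell}$ rather than your equality $L_{\ell}=H^1_{\mathrm{unr}}$, which is harmless since you end up showing $H^1_{\mathrm{unr}}=H^1$ anyway.
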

\begin{proof}
Let \[
    H_{\mathrm{unr}}^1(\Gal_{\Q_{\ell}}, \ad^0\rho_{\bfitx}^{\spin}) := \ker\left(H^1(\Gal_{\Q_{\ell}}, \ad^0\rho_{\bfitx}^{\spin}) \rightarrow H^1(I_{\ell}, \ad^0\rho_{\bfitx}^{\spin})\right)
\] By definition, we see that $ H_{\mathrm{unr}}^1(\Gal_{\Q_{\ell}}, \ad^0\rho_{\bfitx}^{\spin}) \subset L_{\ell}$. Thus, it is enough to show that \[
     H_{\mathrm{unr}}^1(\Gal_{\Q_{\ell}}, \ad^0\rho_{\bfitx}^{\spin}) = H^1(\Gal_{\Q_{\ell}}, \ad^0\rho_{\bfitx}^{\spin}).
\]

\vspace{2mm}

First of all, observe that \[
     H_{\mathrm{unr}}^1(\Gal_{\Q_{\ell}}, \ad^0\rho_{\bfitx}^{\spin}) = H^1(\Gal_{\Q_{\ell}}/I_{\ell}, (\ad^0\rho_{\bfitx}^{\spin})^{I_{\ell}})
\] by definition. Note that $\Gal_{\Q_{\ell}}/I_{\ell} \simeq \widehat{\Z}$. Hence, one deduces from the discussion in \cite[Chapter XIII, \S 1]{Serre-LocalFields} that \begin{align*}
    \dim_{k_{\bfitx}} H_{\mathrm{unr}}^1(\Gal_{\Q_{\ell}}, \ad^0\rho_{\bfitx}^{\spin}) & = \dim_{k_{\bfitx}} H^1(\Gal_{\Q_{\ell}}/I_{\ell}, (\ad^0\rho_{\bfitx}^{\spin})^{I_{\ell}})\\
    & = \dim_{k_{\bfitx}} H^0(\Gal_{\Q_{\ell}}/I_{\ell}, (\ad^0\rho_{\bfitx}^{\spin})^{I_{\ell}})\\
    & = \dim_{k_{\bfitx}} H^0(\Gal_{\Q_{\ell}}, \ad^0\rho_{\bfitx}^{\spin}).
\end{align*} By applying the local Euler characteristic, the desired equation will follow once we show \[
    H^2(\Gal_{\Q_{\ell}}, \ad^0\rho_{\bfitx}^{\spin}) = 0.
\] By Tate duality, it is equivalent to show \[
    H^0(\Gal_{\Q_{\ell}}, \ad^0\rho_{\bfitx}^{\spin}(1)) = 0.
\]

\vspace{2mm}

Let $L$ be the real extension of $\Q$ as in Hypothesis \ref{Hyp: spin functoriality}, we claim that for any place $v$ in $L$ sitting above $\ell$, we have \[
    H^0(\Gal_{L_{v}}, \ad^0\rho_{\bfitx}^{\spin}(1)) = 0,
\] where $\Gal_{L_v} = \Gal(\overline{\Q}_{\ell}/L_v)$ is the absolute Galois group of $L_v$. However, under the assumption of Hypothesis \ref{Hyp: spin functoriality}, the desired vanishing follows from \cite[Lemma 1.3.2]{BLGGT14} and the discussion around (\ref{eq: inclusions of Galois cohomologies}). 

\vspace{2mm}

Finally, observe that the restriction map \[
    \Res: H^0(\Gal_{\Q_{\ell}}, \ad^0\rho_{\bfitx}^{\spin}(1)) \rightarrow H^0(\Gal_{L_{v}}, \ad^0\rho_{\bfitx}^{\spin}(1))
\] is an injection since $k_{\bfitx}$ is of characteristic zero so that \[
    \mathrm{Corres}\circ \Res = \text{ multiplication by }[L_v : \Q_{\ell}]
\] is an injection. The assertion then follows. 
\end{proof}

\paragraph{Global Galois deformations.} Consider the following two global deformation functors:\begin{enumerate}
    \item[(i)] The deformation problem \[
        \scrD_{\bfitx, \bbF_{\bullet}^{\bfitx}}^{\spin} : \Ar \rightarrow \Sets,
    \] sending each $A\in \Ar$ to isomorphism classes of representations $\rho^{\spin}_A: \Gal_{\Q, \Sbad} \rightarrow \GSpin_{2g+1}(A)$ and triangulation $\Fil_{\bullet}\D_{\rig}(\spin \circ \rho^{\spin}_A|_{\Gal_{\Q_p}})$ such that \begin{enumerate}
        \item[$\bullet$] $\rho^{\spin}_A\otimes_A k_{\bfitx} \simeq \rho^{\spin}_{\bfitx}$
        \item[$\bullet$] $\det \spin \circ \rho_A^{\spin} = \det\rho_{\bfitx}$
        \item[$\bullet$] $(\spin \circ \rho^{\spin}_{A}|_{\Gal_{\Q_p}}, \Fil_{\bullet}\D_{\rig}(\spin \circ \rho_A|_{\Gal_{\Q_p}}))\in \scrD^{\spin}_{\bfitx, \bbF_{\bullet}^{\bfitx}, p}(A)$
        \item[$\bullet$] $\rho^{\spin}_A|_{\Gal_{\Q_{{\ell}}}}\in \scrD^{\spin}_{\bfitx, \ell}(A)$ for $\ell\in \Sbad$
    \end{enumerate}
    
    \item[(ii)] The deformation problem \[
        \scrD^{\spin}_{\bfitx, f}: \Ar \rightarrow \Sets,
    \] sending each $A\in \Ar$ to isomorphism classes of representations $\rho^{\spin}_A: \Gal_{\Q, \Sbad} \rightarrow \GSpin_{2g+1}(A)$ such that \begin{enumerate}
        \item[$\bullet$] $\rho^{\spin}_A \otimes_{k_{\bfitx}} k_{\bfitx} \simeq \rho^{\spin}_{\bfitx}$
        \item[$\bullet$] $\det \spin \circ \rho_A = \det\rho_{\bfitx}$
        \item[$\bullet$] $\rho^{\spin}_A|_{\Gal_{\Q_p}}\in \scrD^{\spin}_{\bfitx, f, p}(A)$
        \item[$\bullet$] $\rho^{\spin}_A|_{\Gal_{\Q_{\ell}}}\in \scrD^{\spin}_{\bfitx, \ell}(A)$ for $\ell\in \Sbad$.
    \end{enumerate}
\end{enumerate} 

\begin{Lemma}\label{Lemma: representibility of the deformation functors} Keep the above notations. 
\begin{enumerate}
    \item[(i)] The deformation problems $\scrD^{\spin}_{\bfitx, \bbF_{\bullet}^{\bfitx}}$ and $\scrD^{\spin}_{\bfitx, f}$ are pro-representable. Denote by $R_{\bfitx, \bbF_{\bullet}^{\bfitx}}^{\univ}$ and $R_{\bfitx, f}^{\univ}$ the complete noetherian local rings that represent these two deformation functors respectively.  
    \item[(ii)] Suppose $\bbF_{\bullet}^{\bfitx}$ is non-critical, then $\scrD^{\spin}_{\bfitx, f}$ is a subfunctor of $\scrD^{\spin}_{\bfitx, \bbF_{\bullet}^{\bfitx}}$.
\end{enumerate}
\end{Lemma}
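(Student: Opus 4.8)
The plan is to handle the two items by separate standard mechanisms: Schlessinger's criteria for (i), and the local theory of $(\varphi,\Gamma)$-modules over artinian rings from \cite{Bellaiche-Chenevier-book} for (ii). For (i), both $\scrD^{\spin}_{\bfitx, f}$ and $\scrD^{\spin}_{\bfitx, \bbF_{\bullet}^{\bfitx}}$ are obtained from the full (fixed-determinant) deformation functor of $\rho_{\bfitx}^{\spin}$ by imposing local conditions at $p$ and at the primes $\ell\mid N$, and each of those conditions --- being crystalline in the sense of \cite[Definition 2.2.10]{Bellaiche-Chenevier-book} with the prescribed $\Weyl^H$-shape of crystalline Frobenius eigenvalues, carrying a triangulation lifting $\bbF_{\bullet}^{\bfitx}$ with parameter of the prescribed shape, having inertia-invariant restriction at $\ell\mid N$ --- is a deformation condition closed under fibre products, so Schlessinger's (H1)--(H3) hold automatically. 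The content is in (H4), which for a deformation functor with fixed determinant reduces to $H^0(\Gal_{\Q, \Sbad}, \ad^0\rho_{\bfitx}^{\spin}) = 0$; I would deduce this from the injection $H^0(\Gal_{\Q, \Sbad}, \ad^0\rho_{\bfitx}^{\spin}) \hookrightarrow H^0(\Gal_{\Q, \Sbad}, \mathfrak{sl}_{2^g})$ of $(\ref{eq: inclusions of Galois cohomologies})$ together with the fact that $H^0(\Gal_{\Q, \Sbad}, \mathfrak{sl}_{2^g})$ is the space of trace-zero $\Gal_{\Q, \Sbad}$-equivariant endomorphisms of $\rho_{\bfitx}$, which vanishes because $\rho_{\bfitx}$ is absolutely irreducible and $k_{\bfitx}$ has characteristic zero. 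Finite-dimensionality of the tangent spaces is clear (they embed into $H^1(\Gal_{\Q, \Sbad}, \ad^0\rho_{\bfitx}^{\spin})$), so Schlessinger yields complete noetherian local rings $R_{\bfitx, f}^{\univ}$ and $R_{\bfitx, \bbF_{\bullet}^{\bfitx}}^{\univ}$. The only extra point for $\scrD^{\spin}_{\bfitx, \bbF_{\bullet}^{\bfitx}}$ is that it also remembers the triangulation; but since $\bfitx\in\calX_{\heartsuit}^{\cl}$ the residual crystalline Frobenius has pairwise distinct eigenvalues, so $\bbF_{\bullet}^{\bfitx}$ is regular and the triangulation datum is relatively representable over the full deformation functor (\cite[\S 2.5]{Bellaiche-Chenevier-book}), hence adjoining it does not disturb pro-representability.

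For (ii), the conditions at the primes $\ell\mid N$ defining $\scrD^{\spin}_{\bfitx, f}$ and $\scrD^{\spin}_{\bfitx, \bbF_{\bullet}^{\bfitx}}$ are literally identical, so the claim reduces to the local statement at $p$: given $A\in\Ar$ and $\rho_A^{\spin}\in\scrD^{\spin}_{\bfitx, f, p}(A)$, the module $V_A := \spin\circ\rho_A^{\spin}|_{\Gal_{\Q_p}}$ carries a canonical triangulation making $(V_A, \Fil_{\bullet}\D_{\rig}(V_A))$ an object of $\scrD^{\spin}_{\bfitx, \bbF_{\bullet}^{\bfitx}, p}(A)$, and this assignment is functorial. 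To produce the triangulation, note that $\D_{\cris}(V_A)$ is free over $A$ with a $\varphi$-action deforming that on $\D_{\cris}(\rho_{\bfitx}|_{\Gal_{\Q_p}})$, whose eigenvalues $(\varphi_{A,i})$ deform $(\varphi_i)$ in the prescribed $\Weyl^H$-ordering; because the $\varphi_i$ are pairwise distinct (again $\bfitx\in\calX_{\heartsuit}^{\cl}$), Hensel's lemma splits $\D_{\cris}(V_A)$ over $A$ into $\varphi$-stable lines, and the chosen ordering assembles them into a full $\varphi$-stable $A$-filtration, hence --- by the artinian version of Proposition \ref{Proposition: refinements and triangulations}(i) --- a triangulation $\Fil_{\bullet}\D_{\rig}(V_A)$ whose parameter $\delta_A$ has the prescribed shape (its value at $p$ reading off the $\varphi_{A,i}$ up to the fixed, non-deforming Hodge--Tate normalisation). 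That this triangulation actually lifts $\Fil_{\bullet}\D_{\rig}(\rho_{\bfitx}|_{\Gal_{\Q_p}})$ is where non-criticality of $\bbF_{\bullet}^{\bfitx}$ enters: by Proposition \ref{Proposition: refinements and triangulations}(ii) non-criticality is equivalent to the Hodge--Tate weights attached to the refinement being increasing, an open condition that lifts from $k_{\bfitx}$ to $A$ by Nakayama, which forces the reduction of $\Fil_{\bullet}\D_{\rig}(V_A)$ to equal $\Fil_{\bullet}\D_{\rig}(\rho_{\bfitx}|_{\Gal_{\Q_p}})$. The determinant condition is preserved throughout, and the eigenspace construction is manifestly functorial in $A$ (an isomorphism of crystalline deformations respects $\D_{\cris}$, $\varphi$, and the fixed ordering, hence the canonical triangulation), so $\scrD^{\spin}_{\bfitx, f}$ is a subfunctor of $\scrD^{\spin}_{\bfitx, \bbF_{\bullet}^{\bfitx}}$.

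The main obstacle I anticipate is the last paragraph: promoting the implication ``crystalline with non-critical refinement $\Rightarrow$ trianguline lifting the given triangulation, with the correct parameter'' from the single-representation statement recorded in Proposition \ref{Proposition: refinements and triangulations} to a genuinely functorial statement over arbitrary $A\in\Ar$, and carrying out the bookkeeping that the $\Weyl^H$-shape imposed on Frobenius eigenvalues in $\scrD^{\spin}_{\bfitx, f, p}$ matches, term by term, the shape imposed on the parameter in $\scrD^{\spin}_{\bfitx, \bbF_{\bullet}^{\bfitx}, p}$. Throughout, the residual regularity built into $\calX_{\heartsuit}^{\cl}$ --- distinct crystalline Frobenius eigenvalues and distinct Hodge--Tate--Sen weights --- is exactly what makes the Hensel splittings, the uniqueness of the lifted triangulation, and the relative representability of the triangulation datum go through, so the write-up should flag each place where it is invoked.
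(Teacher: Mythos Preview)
Your proposal is correct and follows the same route as the paper: for (i) you invoke Schlessinger with the vanishing of $H^0(\Gal_{\Q,\Sbad},\ad^0\rho_{\bfitx}^{\spin})$ via absolute irreducibility of $\rho_{\bfitx}$, and for (ii) you reconstruct the crystalline-to-trianguline embedding under non-criticality --- exactly the content the paper obtains by citing \cite[\S 4]{Khare-Thorne-2017}, \cite[Proposition 3.7 \& 3.8]{Hansen-Thorne}, and \cite[Proposition 2.5.8]{Bellaiche-Chenevier-book}. The only point the paper makes explicit that you leave implicit is that the extra constraints (the $\GSpin_{2g+1}$-valuedness via the closed immersion $\spin$, the $\Weyl^H$-shape of parameters, the fixed determinant) are closed, isomorphism-stable conditions in the sense of \cite[Definition 4.1]{Khare-Thorne-2017}, so the cited results for the ambient $\GL_{2^g}$-valued functors descend to yours.
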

\begin{proof}
Since $\rho_{\bfitx}$ is absolutely irreducible, the first assertion follows from standard Galois deformation theory (see, for example, \cite[\S 4]{Khare-Thorne-2017} and \cite[Proposition 3.7 \& Proposition 3.8]{Hansen-Thorne}). The second assertion is an immediate consequence of \cite[Proposition 2.5.8]{Bellaiche-Chenevier-book}. Notice that our deformation problems are slightly different from the ones considered in \textit{op. cit.} and \cite{Hansen-Thorne}. In fact, one sees easily that our deformation problems are subfunctors of the deformation problems considered therein. Their results imply ours since $\spin: \GSpin_{2g+1} \rightarrow \GL_{2^g}$ is a closed immersion, the conditions we required on the relations of the parameters and the fixed determinant of the deformations are closed conditions and they are stable under isomorphisms, \emph{i.e.}, they satisfy the definition of `deformation problems' (see, for example, \cite[Definition 4.1]{Khare-Thorne-2017}).
\end{proof}

The Bloch--Kato Selmer group associated with $\ad^0\rho^{\spin}_{\bfitx}$ is defined to be \begin{equation}\label{eq: Bloch--Kato Selmer group}
    H_f^1(\Q, \ad^0\rho^{\spin}_{\bfitx}) := \ker\left(H^1(\Gal_{\Q, \Sbad}, \ad^0\rho^{\spin}_{\bfitx}) \xrightarrow{\Res} \prod_{\ell\in \Sbad\cup \{p\}} \frac{H^1(\Gal_{\Q_{\ell}}, \ad^0\rho^{\spin}_{\bfitx})}{L_{\ell}}\right),
\end{equation} where $L_{\ell}$ are as defined in (\ref{eq: local Selmer factor at p}) and (\ref{eq: local factor for Selmer group at N}).

\begin{Proposition}\label{Proposition: deformation and Bloch-Kato}
The tangent space $\scrD_{\bfitx, f}^{\spin}(k_{\bfitx}[\varepsilon])$ of $\scrD^{\spin}_{\bfitx, f}$ can naturally be identified with the Bloch--Kato Selmer group $H_f^1(\Q, \ad^0\rho^{\spin}_{\bfitx})$.
\end{Proposition}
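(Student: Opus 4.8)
The plan is to apply the standard Mazur-style translation between infinitesimal Galois deformations and Galois cohomology, taking care of the two bookkeeping points --- the fixed-determinant constraint and the local conditions at $\Sbad\cup\{p\}$ --- after which the identification becomes essentially tautological. Concretely, since $\rho_{\bfitx}^{\spin}$ is absolutely irreducible, a $k_{\bfitx}[\varepsilon]$-point of the unrestricted fixed-determinant global deformation functor is, up to the equivalence defining the functor, given by $\sigma\mapsto (1+\varepsilon\, c(\sigma))\,\rho_{\bfitx}^{\spin}(\sigma)$ for a unique continuous $c\colon \Gal_{\Q,\Sbad}\to \Lie(\GSpin_{2g+1})=\ad\rho_{\bfitx}^{\spin}$, the homomorphism condition being equivalent to $c$ being a $1$-cocycle for the adjoint action and two such points being isomorphic exactly when the cocycles differ by a coboundary; this gives the usual identification of the tangent space with $H^1(\Gal_{\Q,\Sbad},\ad\rho_{\bfitx}^{\spin})$, and the same recipe over $\Gal_{\Q_\ell}$ (for $\ell\in\Sbad$) and over $\Gal_{\Q_p}$ identifies restriction of deformations with the restriction maps $H^1(\Gal_{\Q,\Sbad},\ad\rho_{\bfitx}^{\spin})\to H^1(\Gal_{\Q_\ell},\ad\rho_{\bfitx}^{\spin})$. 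Since $\spin\colon \GSpin_{2g+1}\hookrightarrow \GL_{2^g}$ is a closed immersion, $d\spin$ realizes $\ad\rho_{\bfitx}^{\spin}$ as a $\Gal_{\Q,\Sbad}$-stable Lie subalgebra of $\mathfrak{gl}_{2^g}$ (this is how $\ad\rho_{\bfitx}^{\spin}$ was defined), and on a dual-number deformation the condition $\det(\spin\circ\rho_A^{\spin})=\det\rho_{\bfitx}$ reads $\tr\, c=0$, i.e. $c$ takes values in $\ad\rho_{\bfitx}^{\spin}\cap\mathfrak{sl}_{2^g}=\ad^0\rho_{\bfitx}^{\spin}$; hence the fixed-determinant global and local functors have tangent spaces $H^1(\Gal_{\Q,\Sbad},\ad^0\rho_{\bfitx}^{\spin})$ and (subspaces of) $H^1(\Gal_{\Q_\ell},\ad^0\rho_{\bfitx}^{\spin})$ respectively.

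Then I would match the local conditions with the Selmer conditions. By the very definitions in (\ref{eq: local Selmer factor at p}) and (\ref{eq: local factor for Selmer group at N}), $L_p=\scrD_{\bfitx,f,p}^{\spin}(k_{\bfitx}[\varepsilon])\subset H^1(\Gal_{\Q_p},\ad^0\rho_{\bfitx}^{\spin})$ and $L_\ell=\scrD_{\bfitx,\ell}^{\spin}(k_{\bfitx}[\varepsilon])\subset H^1(\Gal_{\Q_\ell},\ad^0\rho_{\bfitx}^{\spin})$ for $\ell\in\Sbad$. A $k_{\bfitx}[\varepsilon]$-point of $\scrD_{\bfitx,f}^{\spin}$ is exactly a fixed-determinant deformation $\rho_A^{\spin}$ of $\rho_{\bfitx}^{\spin}$ over $k_{\bfitx}[\varepsilon]$ whose restriction to $\Gal_{\Q_p}$ lies in $\scrD_{\bfitx,f,p}^{\spin}(k_{\bfitx}[\varepsilon])$ and whose restriction to $\Gal_{\Q_\ell}$ lies in $\scrD_{\bfitx,\ell}^{\spin}(k_{\bfitx}[\varepsilon])$ for each $\ell\in\Sbad$; under the dictionary above this says precisely that the associated class $[c]\in H^1(\Gal_{\Q,\Sbad},\ad^0\rho_{\bfitx}^{\spin})$ has trivial image in $H^1(\Gal_{\Q_\ell},\ad^0\rho_{\bfitx}^{\spin})/L_\ell$ for every $\ell\in\Sbad\cup\{p\}$. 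Comparing with (\ref{eq: Bloch--Kato Selmer group}), this is exactly the condition $[c]\in H_f^1(\Q,\ad^0\rho_{\bfitx}^{\spin})$; conversely every such class arises from a (necessarily fixed-determinant, locally admissible) deformation over $k_{\bfitx}[\varepsilon]$, and the two assignments are mutually inverse and compatible with the $k_{\bfitx}$-vector space structures, giving the asserted natural identification.

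On difficulty: there is no deep obstruction, since the content has been front-loaded into the construction of the functors $\scrD_{\bfitx,f,p}^{\spin}$, $\scrD_{\bfitx,\ell}^{\spin}$, $\scrD_{\bfitx,f}^{\spin}$ and into the definitions of $L_p$ and the $L_\ell$, which were designed to be the relevant tangent spaces. The two points deserving a moment of care are the Lie-algebra bookkeeping in the $\GSpin_{2g+1}$ (rather than $\GL$) setting --- that $d\spin$ is injective and that the fixed-similitude/fixed-determinant condition corresponds cohomologically to passing from $\ad\rho_{\bfitx}^{\spin}$ to $\ad^0\rho_{\bfitx}^{\spin}$ --- and the verification that, under the chosen cocycle parametrization of dual-number deformations, restriction to a decomposition group is computed by the restriction map in Galois cohomology; both are standard and may be cited from the general deformation-theoretic formalism of \cite{Bellaiche-Chenevier-book}.
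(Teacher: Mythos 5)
Your proposal is correct and follows exactly the route the paper has in mind; the paper's own proof is simply the sentence ``This is clear by definition,'' since the local conditions $L_p$ and $L_\ell$ were deliberately defined in (\ref{eq: local Selmer factor at p}) and (\ref{eq: local factor for Selmer group at N}) to be the tangent spaces of the local deformation functors appearing in $\scrD^{\spin}_{\bfitx,f}$, and (\ref{eq: Bloch--Kato Selmer group}) uses these same $L_\ell$. What you have written is the standard Mazur-style unwinding of that tautology, with the correct observations that the fixed-determinant condition on $\det(\spin\circ\rho_A^{\spin})$ forces cocycles into $\ad^0\rho_{\bfitx}^{\spin}$ and that restriction of deformations matches restriction in cohomology.
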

\begin{proof}
This is follows from standard Galois deformation theory (see, for example, \cite[Proposition 3.7]{Hansen-Thorne}) and the definition of $L_p$ and $L_{\ell}$ (see \eqref{eq: local Selmer factor at p} and \eqref{eq: local factor for Selmer group at N}). 
\end{proof}

\subsection{The adjoint Bloch--Kato Selmer groups}\label{subsection: Bloch--Kato main results}
We keep the notations and assumptions in the previous subsection. We further assume the following \begin{enumerate}
    \item[$\bullet$] the refinement $\bbF_{\bullet}^{\bfitx}$ of $\rho_{\bfitx}$ satisfies (REG) and (NCR);\footnote{ In fact, the condition (NCR) is already satisfied by the definition of $\calX_{\heartsuit}^{\cl}$.}
    \item[$\bullet$] the representation $\rho_{\bfitx}|_{\Gal_{\Q_p}}$ is not isomorphic to its twist by the $p$-adic cyclotomic character. 
\end{enumerate}

\begin{Lemma}\label{Lemma: Galois representation at p for the Hecke algebra}
Denote by $\bbT_{\bfitx} := \widehat{\scrO}_{\calE_0^{}, \bfitx}$ the completed local ring at $\bfitx$. Then, for any ideal of cofinite length $\frakI\subset \bbT_{\bfitx}$ there exists a Galois representation \[
    \rho_{\frakI}: \Gal_{\Q, \Sbad} \rightarrow \GL_{2^g}(\bbT_{\bfitx}/\frakI)
\]
such that \begin{enumerate}
    \item[(i)] $\rho_{\frakI} \otimes_{\bbT_{\bfitx}} k_{\bfitx} \simeq \rho_{\bfitx}$
    \item[(ii)] $\rho_{\frakI}|_{\Gal_{\Q_p}} \in \scrD_{\rho_{\bfitx}|_{\Gal_{\Q_p}}, \bbF_{\bullet}^{\bfitx}, p}(\bbT_{\bfitx}/\frakI)$
\end{enumerate}
\end{Lemma}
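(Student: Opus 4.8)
The plan is to produce $\rho_{\frakI}$ by combining the universal determinant $\Det^{\univ}$ of Proposition \ref{Proposition: universal trace} with Chenevier's theorem on determinants (Theorem \ref{Theorem: determinants and representations}(ii)), and then to identify the local triangulation at $p$ via the refined family machinery (Theorem \ref{Theorem: BC refined deformation}). Concretely: let $\frakI\subset \bbT_{\bfitx} = \widehat{\scrO}_{\calE_0, \bfitx}$ be an ideal of cofinite length. The specialisation of $\Det^{\univ}$ along $\scrO_{\calE_0^{\irr}}(\calE_0^{\irr}) \to \scrO_{\calE_0^{\irr}, \bfitx} \to \bbT_{\bfitx} \to \bbT_{\bfitx}/\frakI$ gives a $2^g$-dimensional determinant $D_{\frakI}: \Gal_{\Q, \Sbad} \to \bbT_{\bfitx}/\frakI$. (Here one uses that $\bfitx \in \calX_{\heartsuit}^{\cl} \subset \calX^{\cl}$ lies in $\calE_0^{\irr}$, so $\Det^{\univ}$ is defined near $\bfitx$.) Since $\bbT_{\bfitx}/\frakI$ is an Artinian local ring with residue field $k_{\bfitx}$, which is henselian, and since $\rho_{\bfitx}$ is (absolutely) irreducible by our standing assumption, Theorem \ref{Theorem: determinants and representations}(ii) applies: there is a unique representation $\rho_{\frakI}: \Gal_{\Q, \Sbad} \to \GL_{2^g}(\bbT_{\bfitx}/\frakI)$ with $\det(1 + X\rho_{\frakI}(\sigma)) = D_{\frakI}(1 + X\sigma)$, and by uniqueness $\rho_{\frakI}\otimes_{\bbT_{\bfitx}} k_{\bfitx} \simeq \rho_{\bfitx}$, which is (i). (One may first pass to $\scrO_{\calE_0^{\irr}, \bfitx}$, obtain $\rho_{\calE_0^{\irr}, \bfitx}$ as in the discussion before Theorem \ref{Theorem: BC refined deformation}, and then reduce mod $\frakI$; this makes the compatibility with the refined family more transparent.)

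For (ii), I would invoke Theorem \ref{Theorem: BC refined deformation} directly. The refined family $(\calE_0^{\irr}, \Det^{\univ}, \calX_{\heartsuit}^{\cl}, \{\alpha_i\}, \{F_i\})$ constructed in Theorem \ref{Theorem: refined family on the cuspidal eigenvariety} is a refined family in the sense of Definition \ref{Definition: refined families}, and $\bfitx \in \calX_{\heartsuit}^{\cl}$ is a point at which $\rho_{\bfitx}$ is irreducible and the refinement $\bbF_{\bullet}^{\bfitx}$ satisfies (REG) and (NCR) by our standing assumptions. Therefore Theorem \ref{Theorem: BC refined deformation} tells us that for any cofinite-length ideal $\frakI \subset \scrO_{\calE_0^{\irr}, \bfitx}$, the reduction $\rho_{\calE_0^{\irr}, \bfitx} \otimes \scrO_{\calE_0^{\irr}, \bfitx}/\frakI$ is a trianguline deformation of $(\rho_{\bfitx}|_{\Gal_{\Q_p}}, \bbF_{\bullet}^{\bfitx})$ with parameter $\delta_{\bfitx} \otimes \scrO_{\calE_0^{\irr}, \bfitx}/\frakI$, i.e. it lies in $\scrD_{\rho_{\bfitx}|_{\Gal_{\Q_p}}, \bbF_{\bullet}^{\bfitx}, p}(\scrO_{\calE_0^{\irr}, \bfitx}/\frakI) = \scrD_{\rho_{\bfitx}|_{\Gal_{\Q_p}}, \bbF_{\bullet}^{\bfitx}}(\scrO_{\calE_0^{\irr}, \bfitx}/\frakI)$ in the notation of Remark \ref{Remark: trianguline deformation}. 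Restricting $\rho_{\frakI}$ to $\Gal_{\Q_p}$ and using the compatibility of the two constructions then gives (ii).

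The one subtlety to address carefully — and the step I expect to be the main obstacle — is the passage between $\scrO_{\calE_0^{\irr}, \bfitx}$ and $\bbT_{\bfitx} = \widehat{\scrO}_{\calE_0, \bfitx}$. Theorem \ref{Theorem: BC refined deformation} is stated over $\scrO_{\calX, \bfity}$ for the ambient rigid variety $\calX = \calE_0^{\irr}$, whereas the Lemma is phrased over the completed local ring of $\calE_0$; I would note that $\calE_0^{\irr}$ is a union of irreducible components of $\calE_0$ (or that $\bfitx$ lies only on components contained in $\calE_0^{\irr}$, by definition of $\calX^{\cl}$ and $\calE_0^{\irr}$ as the Zariski closure thereof), so that $\widehat{\scrO}_{\calE_0, \bfitx} = \widehat{\scrO}_{\calE_0^{\irr}, \bfitx}$, and that any cofinite-length quotient of $\widehat{\scrO}_{\calE_0^{\irr}, \bfitx}$ is also a cofinite-length quotient of $\scrO_{\calE_0^{\irr}, \bfitx}$ (equivalently, factors through $\scrO_{\calE_0^{\irr}, \bfitx}/\frakI'$ for some cofinite-length $\frakI'$, since the maximal ideal of the local ring and its completion agree modulo any power). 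Once this bookkeeping is in place, both (i) and (ii) are formal consequences of the cited results; there is essentially no new computation, only the verification that the hypotheses of Theorem \ref{Theorem: determinants and representations}(ii) and Theorem \ref{Theorem: BC refined deformation} are met at $\bfitx$, which they are by the construction of $\calX_{\heartsuit}^{\cl}$ and the standing assumptions (REG), (NCR), and irreducibility of $\rho_{\bfitx}$.
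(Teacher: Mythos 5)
Your proof takes exactly the same route as the paper's: part (i) is deduced from Theorem \ref{Theorem: determinants and representations}(ii) applied to the specialisation of $\Det^{\univ}$, and part (ii) is deduced from Theorem \ref{Theorem: BC refined deformation} applied to the refined family of Theorem \ref{Theorem: refined family on the cuspidal eigenvariety}; the paper states precisely these two implications (in one sentence each) and leaves the bookkeeping — which you spell out — to the reader.
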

\begin{proof}
The first assertion is a consequence of Theorem \ref{Theorem: determinants and representations}. The second assertion is a consequence of Theorem \ref{Theorem: BC refined deformation}.
\end{proof}

\begin{Hypothesis}\label{Hypothesis: minimal ramification} Consider the Galois representation $\rho_{\frakI}$ in Lemma \ref{Lemma: Galois representation at p for the Hecke algebra} for any ideal of cofinite length $\frakI\subset \bbT_{\bfitx}$. We assume
\begin{enumerate}
    \item[(i)] The Galois representation $\rho_{\frakI}$ factors as \[
        \rho_{\frakI}: \Gal_{\Q, \Sbad} \xrightarrow{\rho_{\frakI}^{\spin}} \GSpin_{2g+1}(\bbT_{\bfitx}/\frakI) \xrightarrow{\spin} \GL_{2^g}(\bbT_{\bfitx}/\frakI).
    \]
    \item[(ii)] The Galois representation $\rho^{\spin}_{\frakI}|_{\Gal_{\Q_p}}\in \scrD^{\spin}_{\bfitx, \bbF_{\bullet}^{\bfitx}, p}(\bbT_{\bfitx}/\frakI)$.
    
    \item[(iii)] The tame level structure $\Gamma^{(p)}$ implies that the Galois representation $\rho^{\spin}_{\frakI}$ satisfies \[
    \rho^{\spin}_{\frakI}|_{\Gal_{\Q_{\ell}}} \in \scrD^{\spin}_{\bfitx, \ell}(\bbT_{\bfitx}/\frakI)
\] for any $\ell | N$.
\end{enumerate}
\end{Hypothesis}

\begin{Remark}\label{Remark: hypothesis on minimal ramification}
\normalfont We remark that the above hypothesis is safe to assume:\begin{enumerate}
    \item[(i)] The first two conditions are natural. When $g=1$, the conditions are trivial. When $g=2$, $\GSpin_{5}$ is isomorphic to $\GSp_{4}$. In this case, the proof of \cite[Lemma 4.3.3]{Genestier-Tilouine} implies the conditions.
    \item[(ii)] Roughly speaking, the third condition in the hypothesis means that the level structure on the automorphic side determines the ramification type on the Galois side. This condition is inspired by the Taylor--Wiles method. When $g=1$, the classical example is the work of R. Taylor and A. Wiles in \cite{Taylor--Wiles}. In \emph{loc. cit.}, they showed that if one considers the Hecke algebra on the space of weight-2 modular forms of a certain level, then the Galois representation with coefficients in the local Hecke algebra satisfies certain Galois deformation problem. For higher-rank groups, one sees, for example, such a relation in \cite[\S 4.3]{Genestier-Tilouine} for $\GSp_4$ and \cite[\S 3.4]{Clozel-Harris-Taylor} for $\GL_n$ over CM fields.
\end{enumerate}  
\end{Remark}

\begin{Lemma}\label{Lemma: specialisation of the deformation ring}
Denote by $R_{\wt(\bfitx)}$ the complete local ring at $\wt(\bfitx)$ and so we have a natural homomorphism $R_{\wt(\bfitx)} \rightarrow \Q_p\rightarrow k_{\bfitx}$, where the first map is given by quotienting the maximal ideal and the second map is the natural inclusion. Then, $R_{\bfitx, \bbF_{\bullet}^{\bfitx}}^{\univ}$ admits an action of $R_{\wt(\bfitx)}$ and \[
    R_{\bfitx, \bbF_{\bullet}^{\bfitx}}^{\univ}\otimes_{R_{\wt(\bfitx)}} k_{\bfitx} = R_{\bfitx, f}^{\univ}.
\]
\end{Lemma}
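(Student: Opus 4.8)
The plan is to exhibit the $R_{\wt(\bfitx)}$-algebra structure on $R^{\univ}_{\bfitx, \bbF_{\bullet}^{\bfitx}}$ via the universal parameter, and then to identify the functor represented by the base-change ring with $\scrD^{\spin}_{\bfitx, f}$ by a direct comparison of deformation problems. First I would recall that, for any $A\in \Ar$ and any object $(\rho^{\spin}_A, \Fil_{\bullet}\D_{\rig}(\spin\circ\rho^{\spin}_A))\in \scrD^{\spin}_{\bfitx, \bbF_{\bullet}^{\bfitx}}(A)$, the associated parameter $\delta_A = (\delta_{A,1}, \ldots, \delta_{A, 2^g})$ is constrained (by the conditions defining $\scrD^{\spin}_{\bfitx, \bbF_{\bullet}^{\bfitx}, p}$) to have the shape $\delta_{A,1}(1, \delta_{A,2}', \ldots)$ for characters $\delta_{A,i}'$, with $\delta_{A,i} = \delta_{A,1}\delta_{A,i}'$ for $2\le i\le g+1$; moreover the Hodge--Tate--Sen weights of $\spin\circ\rho^{\spin}_A|_{\Gal_{\Q_p}}$ are recorded by the restrictions $\delta_{A,i}'|_{\Z_p^\times}$. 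Sending such a deformation to the tuple of smooth (i.e. weight) parts of the $\delta_{A,i}'|_{\Z_p^\times}$ (equivalently, the induced deformation of the list of Hodge--Tate--Sen weights, which live over the weight space) defines a natural transformation to the functor pro-represented by $R_{\wt(\bfitx)}$, hence a ring map $R_{\wt(\bfitx)}\to R^{\univ}_{\bfitx, \bbF_{\bullet}^{\bfitx}}$. This is the claimed action.

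Next I would unwind what it means to tensor by $k_{\bfitx}$ over $R_{\wt(\bfitx)}$ along the augmentation $R_{\wt(\bfitx)}\twoheadrightarrow k_{\bfitx}$. By the universal property, $R^{\univ}_{\bfitx, \bbF_{\bullet}^{\bfitx}}\otimes_{R_{\wt(\bfitx)}} k_{\bfitx}$ pro-represents the subfunctor of $\scrD^{\spin}_{\bfitx, \bbF_{\bullet}^{\bfitx}}$ consisting of those deformations whose induced deformation of Hodge--Tate--Sen weights is trivial, i.e. whose parameter has the same weights as $\delta$ itself. So the lemma reduces to the claim: a deformation $(\rho^{\spin}_A, \Fil_\bullet)$ in $\scrD^{\spin}_{\bfitx, \bbF_{\bullet}^{\bfitx}}(A)$ has constant Hodge--Tate--Sen weights if and only if $\spin\circ\rho^{\spin}_A|_{\Gal_{\Q_p}}$ is crystalline with the prescribed Frobenius-eigenvalue shape, i.e. if and only if $\rho^{\spin}_A|_{\Gal_{\Q_p}}\in \scrD^{\spin}_{\bfitx, f, p}(A)$ — and this is exactly \cite[Proposition 2.5.8]{Bellaiche-Chenevier-book} together with the non-criticality hypothesis (REG)/(NCR) (this is the same input already used in Lemma \ref{Lemma: representibility of the deformation functors}(ii)). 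The forward direction is the substantive half: constant weights plus a non-critical triangulation forces crystallinity over artinian coefficients; the reverse direction is immediate since a crystalline deformation with the given eigenvalue ordering carries the triangulation induced by $\bbF_{\bullet}^{\bfitx}$ and has constant weights. One then checks the ramification conditions at $\ell\in\Sbad$ match on both sides (they are literally the same condition $\scrD^{\spin}_{\bfitx, \ell}$ in both functors), and that the fixed-determinant and parameter-shape constraints transport correctly; these are bookkeeping. Hence the $k_{\bfitx}$-base change functor is $\scrD^{\spin}_{\bfitx, f}$, which is pro-represented by $R^{\univ}_{\bfitx, f}$ by Lemma \ref{Lemma: representibility of the deformation functors}(i), giving the isomorphism.

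The main obstacle I anticipate is making the $R_{\wt(\bfitx)}$-action precise: one must argue that the assignment of Hodge--Tate--Sen weights to a triangulated deformation is genuinely a morphism to the weight-space deformation functor (functoriality in $A$ and compatibility with the refined family of Theorem \ref{Theorem: refined family on the cuspidal eigenvariety}, via the characters $\delta_{A,i}'$), rather than merely a set-theoretic recipe; once that is in place, the identification of the fibre is a clean application of the non-critical comparison result of Bellaïche--Chenevier. I would also take care that the shape relation $(\delta_{A,1}, \ldots) = \delta_{A,1}(1, \delta_{A,2}', \ldots)$ is preserved under the base change and matches the eigenvalue relation $(\varphi_{A,1}, \ldots) = \varphi_{A,1}(1, \varphi_{A,2}', \ldots)$ built into $\scrD^{\spin}_{\bfitx, f, p}$, so that the two crystalline-type conditions really do coincide and no spurious extra deformations survive.
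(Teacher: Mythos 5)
Your proposal matches the paper's argument essentially step for step: the $R_{\wt(\bfitx)}$-action is defined via the characters $\delta_{A,i}'$ appearing in the parameter of the triangulation (the paper writes the explicit map $\rho_A^{\spin}\mapsto ((\delta_{A,g+1}')^{-1}|_{\Z_p^\times}-g, \ldots, (\delta_{A,2}')^{-1}|_{\Z_p^\times}-1)$ into $\Hom_{\cts}(T_{\GL_g,1}, A^\times)$), and the identification of the fibre over $k_{\bfitx}$ with $R_{\bfitx,f}^{\univ}$ is deduced from (REG) together with the constant weight lemma exactly as you describe. One small correction: the constant weight lemma the paper invokes for the substantive direction is \cite[Proposition 2.5.4]{Bellaiche-Chenevier-book}, not Proposition 2.5.8 (the latter is the subfunctor inclusion already used in Lemma \ref{Lemma: representibility of the deformation functors}(ii)).
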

\begin{proof}
Let us first explain the action of $R_{\wt(\bfitx)}$ on $R_{\bfitx, \bbF_{\bullet}^{\bfitx}}^{\univ}$. For any $A\in \Ar$, observe that we have a natural morphism \[
    \scrD^{\spin}_{\bfitx, \bbF_{\bullet}^{\bfitx}}(A) \rightarrow \Hom_{\cts}(T_{\GL_g,1}, A^\times), \quad \rho_A^{\spin}\mapsto ((\delta_{A,g+1}')^{-1}|_{\Z_p^\times}-g, (\delta_{A, g}')^{-1}|_{\Z_p^{\times}} - (g-1), ..., (\delta_{A,2}')^{-1}|_{\Z_p^\times}-1).
\] Under this map, the image of $\rho_{\bfitx}^{\spin}$ is exactly $k = (k_1, ..., k_g)$ by (\ref{eq: parameter ass. with y}). Consequently, there is a natural morphism \[
    \Z_p\llbrack T_{\GL_g, 1}\rrbrack \rightarrow R_{\bfitx, \bbF_{\bullet}^{\bfitx}}^{\univ},
\] which factors through $R_{\wt(\bfitx)}$.

\vspace{2mm}

Since the refinement $\bbF_{\bullet}^{\bfitx}$ satisfies (REG), together with the relation of parameters and the condition of fixed determinant, the desired isomorphism follows from the constant weight lemma (\cite[Proposition 2.5.4]{Bellaiche-Chenevier-book}), \emph{i.e.}, the crystalline deformations of $\rho_{\bfitx}$ are of constant Hodge--Tate weight, of which being the same as $\rho_{\bfitx}$.
\end{proof}

\begin{Lemma}\label{Lemma: exact sequence for tangent spaces}
Denote by $H_{\bbF_{\bullet}^{\bfitx}}^1(\Q, \ad^0\rho^{\spin}_{\bfitx})$ the tangent space $\scrD^{\spin}_{\bfitx, \bbF_{\bullet}^{\bfitx}}(k_{\bfitx}[\varepsilon])$ of $\scrD^{\spin}_{\bfitx, \bbF_{\bullet}^{\bfitx}}$. We have an exact sequence \[
    0 \rightarrow H_f^1(\Q, \ad^0\rho^{\spin}_{\bfitx}) \rightarrow H_{\bbF_{\bullet}^{\bfitx}}^1(\Q, \ad^0\rho^{\spin}_{\bfitx}) \rightarrow k_{\bfitx}^g.
\]
\end{Lemma}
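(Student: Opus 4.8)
The plan is to compare the tangent spaces of the two global deformation functors $\scrD^{\spin}_{\bfitx, f}$ and $\scrD^{\spin}_{\bfitx, \bbF_{\bullet}^{\bfitx}}$ and to identify the discrepancy with a piece of local data at $p$ of dimension at most $g$. By Proposition \ref{Proposition: deformation and Bloch-Kato} the left term $H_f^1(\Q, \ad^0\rho^{\spin}_{\bfitx})$ is precisely $\scrD^{\spin}_{\bfitx, f}(k_{\bfitx}[\varepsilon])$, and by definition $H_{\bbF_{\bullet}^{\bfitx}}^1(\Q, \ad^0\rho^{\spin}_{\bfitx}) = \scrD^{\spin}_{\bfitx, \bbF_{\bullet}^{\bfitx}}(k_{\bfitx}[\varepsilon])$. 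Since $\bbF_{\bullet}^{\bfitx}$ is non-critical by hypothesis (the (NCR) assumption, already built into the definition of $\calX_{\heartsuit}^{\cl}$), Lemma \ref{Lemma: representibility of the deformation functors}(ii) gives that $\scrD^{\spin}_{\bfitx, f}$ is a subfunctor of $\scrD^{\spin}_{\bfitx, \bbF_{\bullet}^{\bfitx}}$; hence on tangent spaces the map $H_f^1 \hookrightarrow H_{\bbF_{\bullet}^{\bfitx}}^1$ is injective, which gives exactness at the first two spots of the claimed sequence. It remains to produce the map $H_{\bbF_{\bullet}^{\bfitx}}^1(\Q, \ad^0\rho^{\spin}_{\bfitx}) \rightarrow k_{\bfitx}^g$ whose kernel is exactly $H_f^1$.

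The natural source of this map is the ``weight'' map on deformations. Following the construction in the proof of Lemma \ref{Lemma: specialisation of the deformation ring}, a triangulated deformation $(\rho_A^{\spin}, \Fil_\bullet)$ over $k_{\bfitx}[\varepsilon]$ yields, via its parameter $\delta_A = (\delta_{A,1}, \ldots, \delta_{A,2^g})$ with the imposed multiplicative relations $\delta_{A,i} = \delta_{A,1}\delta_{A,i}'$, a tuple of continuous characters $(\delta_{A,2}', \ldots, \delta_{A,g+1}')$; restricting these to $\Z_p^\times$ and differentiating at $1$ produces an element of $\Hom_{\cts}(\Z_p^\times, k_{\bfitx}[\varepsilon]/k_{\bfitx})^g \cong k_{\bfitx}^g$. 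Concretely, $w\colon H_{\bbF_{\bullet}^{\bfitx}}^1 \to k_{\bfitx}^g$ sends a deformation to the $\varepsilon$-derivative of the Hodge--Tate weights coming from the $g$ independent characters $\delta_{A,2}', \ldots, \delta_{A,g+1}'$ (the remaining weights are determined by these via the same combinatorics recorded in Hypothesis \ref{Hyp: associated Gal. rep}(ii) and Remark \ref{Remark: eigenvalues of crystalline Frob}). This is $k_{\bfitx}$-linear in $\varepsilon$. A deformation lies in the kernel of $w$ precisely when all its Hodge--Tate--Sen weights are constant (equal to those of $\rho_{\bfitx}$), i.e.\ when $\spin\circ\rho_A^{\spin}$ is Hodge--Tate of the same weight; combined with the triangulation condition and (NCR), the constant-weight lemma \cite[Proposition 2.5.4]{Bellaiche-Chenevier-book} (invoked as in Lemma \ref{Lemma: specialisation of the deformation ring}) forces such a deformation to be crystalline, hence to lie in $\scrD^{\spin}_{\bfitx, f, p}(k_{\bfitx}[\varepsilon])$ locally at $p$. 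Since the conditions at $\ell \mid N$ and the fixed-determinant condition are identical for the two functors, this shows $\ker w = \scrD^{\spin}_{\bfitx, f}(k_{\bfitx}[\varepsilon]) = H_f^1(\Q, \ad^0\rho^{\spin}_{\bfitx})$, giving exactness at the middle spot.

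Equivalently, and perhaps more cleanly, one can deduce the whole sequence from Lemma \ref{Lemma: specialisation of the deformation ring} by applying the functor $\Hom_{k_{\bfitx}}(-, k_{\bfitx})$ to the relation $R_{\bfitx, \bbF_{\bullet}^{\bfitx}}^{\univ}\otimes_{R_{\wt(\bfitx)}} k_{\bfitx} = R_{\bfitx, f}^{\univ}$: the base change along $R_{\wt(\bfitx)} \to k_{\bfitx}$ (killing the maximal ideal of the $g$-dimensional regular local ring $R_{\wt(\bfitx)}$, whose cotangent space is $k_{\bfitx}^g$) gives a right-exact sequence of cotangent spaces, and dualizing produces the left-exact sequence $0 \to \mathfrak{t}_{R_{\bfitx,f}^{\univ}} \to \mathfrak{t}_{R_{\bfitx,\bbF_\bullet^{\bfitx}}^{\univ}} \to k_{\bfitx}^g$ of Zariski tangent spaces, which is exactly the assertion. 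I expect the main obstacle to be purely bookkeeping rather than conceptual: one must check that the ``weight derivative'' map is well-defined independently of the choice of representative in each deformation class and that the relations among the $\delta_{A,i}'$ genuinely cut the a priori $2^g$-dimensional weight space down to the $g$-dimensional one parametrized by $\delta_{A,2}', \ldots, \delta_{A,g+1}'$ — i.e.\ that the combinatorial pattern of Remark \ref{Remark: eigenvalues of crystalline Frob} is respected by all deformations in $\scrD^{\spin}_{\bfitx, \bbF_{\bullet}^{\bfitx}}$, which is forced by the explicit relation on parameters imposed in the definition of $\scrD^{\spin}_{\bfitx, \bbF_{\bullet}^{\bfitx}, p}$. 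Once that is in place, exactness is immediate from the constant-weight lemma as above.
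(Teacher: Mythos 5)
Your proposal is correct and follows essentially the same route as the paper. The paper defines the right-hand map $\partial$ by differentiating the full parameter tuple $(\delta_{A,1},\ldots,\delta_{A,2^g})$ at $1$, invokes Lemma \ref{Lemma: specialisation of the deformation ring} to identify $\ker\partial(k_{\bfitx}[\varepsilon])$ with $H_f^1$, and then observes that the relations imposed on the $\delta_{A,i}$ (together with the fixed-determinant condition) force $\image\partial(k_{\bfitx}[\varepsilon])$ into a $g$-dimensional subspace spanned by the data of $\delta_{A,2}',\ldots,\delta_{A,g+1}'$; your first paragraph just builds the map directly into that $g$-dimensional target and re-derives the kernel computation via the constant-weight lemma, which is what the proof of Lemma \ref{Lemma: specialisation of the deformation ring} does internally. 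Your ``cleaner'' alternative --- dualizing the right-exact conormal sequence coming from $R_{\bfitx, \bbF_{\bullet}^{\bfitx}}^{\univ}\otimes_{R_{\wt(\bfitx)}} k_{\bfitx} = R_{\bfitx, f}^{\univ}$ and the fact that $R_{\wt(\bfitx)}$ is regular of dimension $g$ --- is a tidy repackaging of the same argument that avoids re-identifying the $g$-dimensional image by hand, and matches what the paper gestures at by citing Lemma \ref{Lemma: specialisation of the deformation ring} as the source of the kernel identification.
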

\begin{proof}
Following \cite[Proposition 7.6.4]{Bellaiche-Chenevier-book}, we expect an exact sequence \[
    0 \rightarrow H_f^1(\Q, \ad^0\rho^{\spin}_{\bfitx}) \rightarrow H_{\bbF_{\bullet}^{\bfitx}}^1(\Q, \ad^0\rho^{\spin}_{\bfitx}) \rightarrow k_{\bfitx}^{2^g}.
\] The first map is clear while the second map is defined as follows. For any $A\in \Ar$, we have \[
    \scrD^{\spin}_{\bfitx, \bbF_{\bullet}^{\bfitx}}(A) \rightarrow \Hom_{\cts}(\Q_p^\times, A^\times)^{2^g}, \quad \rho_A\mapsto (\delta_{A, 1}, ..., \delta_{A, 2^g}).
\]  Composing with the derivative at $1$, we obtain a morphism \[
    \scrD^{\spin}_{\bfitx, \bbF_{\bullet}^{\bfitx}}(A) \rightarrow A^{2^g}.
\] That is, we obtain \[
    \partial: \scrD^{\spin}_{\bfitx, \bbF_{\bullet}^{\bfitx}} \rightarrow \widehat{\bbG_m^{2^g}}.
\] The second map is then defined to be $\partial(k_{\bfitx}[\varepsilon])$. Lemma \ref{Lemma: specialisation of the deformation ring} shows that $H_{f}^1(\Q, \ad^0\rho^{\spin}_{\bfitx}) = \ker\partial(k_{\bfitx}[\varepsilon])$.

\vspace{2mm}

Recall that the local condition of $\scrD^{\spin}_{\bfitx, \bbF_{\bullet}^{\bfitx}}$ at $p$ requires a relation of the parameters and a fixed determinant. Thus, the image of $\partial(k_{\bfitx}[\varepsilon])$ lies in a subspace of dimension $g$, depending only on the continuous characters $\delta_{A, 2}'$, ..., $\delta_{A, g+1}'$.
\end{proof}

\begin{Proposition}\label{Proposition: relation between R and T}
Retain the notation in Lemma \ref{Lemma: Galois representation at p for the Hecke algebra} and assume Hypothesis \ref{Hypothesis: minimal ramification} holds. 
\begin{enumerate}
    \item[(i)] There exists a canonical ring homomorphism $R^{\univ}_{\bfitx, \bbF_{\bullet}^{\bfitx}}\rightarrow \bbT_{\bfitx}$.
    
    \item[(ii)] If the adjoint Bloch--Kato Selmer group $H_f^1(\Q, \ad^0\rho_{\bfitx}^{\spin})$ vanishes, then the canonical map in (i) is an isomorphism $R^{\univ}_{\bfitx, \bbF_{\bullet}^{\bfitx}}\simeq \bbT_{\bfitx}$ (an `infinitesimal $R=T$ theorem').
\end{enumerate}
\end{Proposition}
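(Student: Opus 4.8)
The plan is to construct the map in (i) from the universal property of $R^{\univ}_{\bfitx,\bbF_\bullet^{\bfitx}}$, and then to deduce (ii) by the standard tangent-space argument together with the infinitesimal Mazur-type criterion. For (i), observe that by Lemma \ref{Lemma: Galois representation at p for the Hecke algebra} every ideal $\frakI\subset\bbT_{\bfitx}$ of cofinite length gives a Galois representation $\rho_{\frakI}$ with $\rho_{\frakI}\otimes k_{\bfitx}\simeq\rho_{\bfitx}$ and $\rho_{\frakI}|_{\Gal_{\Q_p}}$ in the trianguline deformation functor; by Hypothesis \ref{Hypothesis: minimal ramification} this representation lifts to $\rho^{\spin}_{\frakI}:\Gal_{\Q,\Sbad}\to\GSpin_{2g+1}(\bbT_{\bfitx}/\frakI)$ satisfying the local conditions at $p$ (membership in $\scrD^{\spin}_{\bfitx,\bbF_\bullet^{\bfitx},p}$) and at each $\ell\mid N$ (membership in $\scrD^{\spin}_{\bfitx,\ell}$). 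Hence $\rho^{\spin}_{\frakI}$ defines an element of $\scrD^{\spin}_{\bfitx,\bbF_\bullet^{\bfitx}}(\bbT_{\bfitx}/\frakI)$, and by pro-representability (Lemma \ref{Lemma: representibility of the deformation functors}(i)) a ring homomorphism $R^{\univ}_{\bfitx,\bbF_\bullet^{\bfitx}}\to\bbT_{\bfitx}/\frakI$. One must check these are compatible as $\frakI$ shrinks — which follows from the uniqueness clause in Theorem \ref{Theorem: determinants and representations}(ii) applied to $\rho_{\bfitx}$ irreducible (so $\rho_{\frakI}$, hence $\rho^{\spin}_{\frakI}$, is unique up to the appropriate conjugacy) — and then pass to the limit $\bbT_{\bfitx}=\varprojlim_{\frakI}\bbT_{\bfitx}/\frakI$ to get the canonical map $R^{\univ}_{\bfitx,\bbF_\bullet^{\bfitx}}\to\bbT_{\bfitx}$.

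For (ii) the idea is to show the canonical map is surjective and then injective. Surjectivity: $\bbT_{\bfitx}$ is topologically generated over $\Z_p$ (hence over $k_{\bfitx}$, after the base change of Lemma \ref{Lemma: specialisation of the deformation ring}'s flavour) by the images of the Hecke operators $T_{\ell,0}^x$ and $U_{p,i}^x$; by Hypothesis \ref{Hyp: associated Gal. rep}(i) the $T_{\ell,0}^x$-symmetric functions are determined by $\charpoly(\Frob_\ell)$ of $\rho_{\frakI}$, hence lie in the image of $\Det^{\univ}$ and so in the image of $R^{\univ}_{\bfitx,\bbF_\bullet^{\bfitx}}$; and by Hypothesis \ref{Hyp: Tp and Up eigenvalues} together with the refined-family structure (Theorem \ref{Theorem: refined family on the cuspidal eigenvariety}) the $U_{p,i}^x$-eigenvalues are expressible in terms of the $F_i$'s and the Hodge--Tate data, which are recorded by the parameter $\delta_A$ of the trianguline deformation — hence again in the image. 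Thus the canonical map is surjective. Injectivity: it suffices to show it induces an injection on mod-$\frakm^2$ points, i.e. a surjection on tangent spaces $t_{\bbT_{\bfitx}}\to t_{R^{\univ}_{\bfitx,\bbF_\bullet^{\bfitx}}}$, but since the map is already surjective we need the reverse inequality of dimensions. The tangent space of $R^{\univ}_{\bfitx,\bbF_\bullet^{\bfitx}}$ is $H^1_{\bbF_\bullet^{\bfitx}}(\Q,\ad^0\rho^{\spin}_{\bfitx})$; by Lemma \ref{Lemma: exact sequence for tangent spaces} this sits in $0\to H^1_f(\Q,\ad^0\rho^{\spin}_{\bfitx})\to H^1_{\bbF_\bullet^{\bfitx}}(\Q,\ad^0\rho^{\spin}_{\bfitx})\to k_{\bfitx}^g$, so the vanishing hypothesis on $H^1_f$ forces $\dim_{k_{\bfitx}} H^1_{\bbF_\bullet^{\bfitx}}(\Q,\ad^0\rho^{\spin}_{\bfitx})\le g=\dim\calW=\dim_{k_{\bfitx}}\frakm_{R_{\wt(\bfitx)}}/\frakm^2_{R_{\wt(\bfitx)}}$. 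On the other hand $\bbT_{\bfitx}$ is a finite $R_{\wt(\bfitx)}$-algebra of Krull dimension equal to $\dim\calW$ (the eigenvariety is equidimensional over weight space of the same dimension), so $\dim_{k_{\bfitx}} t_{\bbT_{\bfitx}}\ge g$; combined with surjectivity and $\dim t_{R^{\univ}}\le g$ we conclude both tangent spaces have dimension $g$ and the surjection is an isomorphism on tangent spaces, whence (the local rings being noetherian complete) the canonical map itself is an isomorphism.

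I expect the main obstacle to be the \emph{injectivity} half, specifically the verification that $\bbT_{\bfitx}$ has Krull dimension $g$ and the control of its tangent space from below — one needs to know that $\wt:\calE_0\to\calW$ is finite and that $\bbT_{\bfitx}$ is flat (or at least that $\dim\bbT_{\bfitx}=\dim R_{\wt(\bfitx)}$) to run the numerical criterion cleanly; if $\wt$ is merely finite without flatness one must instead argue via the going-down/dimension bound $\dim\bbT_{\bfitx}\ge\dim R_{\wt(\bfitx)}-(\text{defect})$ and the fact that $\calE_0$ is equidimensional of dimension $g$, which still yields $\dim\bbT_{\bfitx}=g$ and hence $\dim t_{\bbT_{\bfitx}}\ge g$. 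A secondary subtlety, to be handled carefully, is that $\scrD^{\spin}_{\bfitx,\bbF_\bullet^{\bfitx}}$ imposes the $\GSpin$-valued and the parameter-relation conditions, so one must check (using Hypothesis \ref{Hypothesis: minimal ramification}(i)--(ii) and Remark \ref{Remark: eigenvalues of crystalline Frob}) that $\rho^{\spin}_{\frakI}$ genuinely satisfies the $\delta_{A,i}=\delta_{A,1}\delta'_{A,i}$ symmetry and the fixed-determinant condition — both of which are inherited from the corresponding symmetry of the Frobenius eigenvalues in Hypothesis \ref{Hyp: associated Gal. rep}(ii) specialised along the refined family.
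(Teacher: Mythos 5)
Your overall strategy is the same as the paper's: obtain the map in (i) from the universal property of $R^{\univ}_{\bfitx,\bbF_{\bullet}^{\bfitx}}$ applied to the representations $\rho^{\spin}_{\frakI}$ of Lemma~\ref{Lemma: Galois representation at p for the Hecke algebra} and Hypothesis~\ref{Hypothesis: minimal ramification}, pass to the limit over cofinite-length $\frakI$, and then for (ii) compare tangent-space and Krull dimensions. Your treatment of (i) and of the surjectivity in (ii) is fine, and is in fact spelled out more carefully than the paper's own gloss (the paper asserts surjectivity of $R^{\univ}_{\bfitx,\bbF_{\bullet}^{\bfitx}}\to\bbT_{\bfitx}/\frakI$ rather quickly, while you explain why the Hecke operators land in the image). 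The remark about the parameter symmetry and fixed determinant being forced on $\rho^{\spin}_{\frakI}$ is handled by Hypothesis~\ref{Hypothesis: minimal ramification}, as you anticipate.

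There is, however, a genuine gap in your injectivity argument for (ii). You conclude that since the surjection $R^{\univ}_{\bfitx,\bbF_{\bullet}^{\bfitx}}\twoheadrightarrow\bbT_{\bfitx}$ induces an isomorphism on tangent spaces and the rings are complete noetherian local, the map must itself be an isomorphism. That inference is false: $k[[x]]\to k[[x]]/(x^2)$ is a surjection of complete noetherian local $k$-algebras with the same residue field, and it induces an isomorphism on cotangent spaces $\frakm/\frakm^2$ (both one-dimensional over $k$), but it is not injective. In general, a surjection of local rings inducing an isomorphism on $\frakm/\frakm^2$ only forces the kernel to lie in $\frakm^2$; it does not force it to vanish. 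The paper closes this with a regularity argument that your write-up omits: the bound $\dim_{k_{\bfitx}} t_{R^{\univ}}\le g$ gives $\dim R^{\univ}_{\bfitx,\bbF_{\bullet}^{\bfitx}}\le g$, while surjectivity onto $\bbT_{\bfitx}$ with $\dim\bbT_{\bfitx}=g$ (equidimensionality of $\calE_0$ and finiteness of $\wt$) gives $\dim R^{\univ}_{\bfitx,\bbF_{\bullet}^{\bfitx}}\ge g$. Thus $R^{\univ}_{\bfitx,\bbF_{\bullet}^{\bfitx}}$ has Krull dimension equal to its embedding dimension, hence is \emph{regular}, hence an integral domain; and a nonzero kernel would make $\bbT_{\bfitx}$ a proper quotient of a domain, of strictly smaller Krull dimension than $g$, a contradiction. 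You need this regular-implies-domain step; the tangent-space equality alone does not suffice.
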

\begin{proof}
By Lemma \ref{Lemma: Galois representation at p for the Hecke algebra} and Hypothesis \ref{Hypothesis: minimal ramification}, for any ideal $\frakI\subset \bbT_{\bfitx}$ of cofinite length, there is a canonical ring homomorphism \[
    R_{\bfitx, \bbF_{\bullet}^{\bfitx}}^{\univ} \rightarrow \bbT_{\bfitx}/\frakI.
\] This ring homomorphism is surjective due to the fact that the characteristic polynomials of the Frobenii under $\rho_{\frakI}$ are given by the Hecke polynomials. Consequently, one obtains a canonical morphism \[
    R_{\bfitx, \bbF_{\bullet}^{\bfitx}}^{\univ} \rightarrow \bbT_{\bfitx} = \varprojlim_{\frakI\text{ : cofinite length}} \bbT_{\bfitx}/\frakI
\] with dense image. Since $R_{\bfitx, \bbF_{\bullet}^{\bfitx}}^{\univ}$ is complete, the canonical morphism $R_{\bfitx, \bbF_{\bullet}^{\bfitx}}^{\univ} \rightarrow \bbT_{\bfitx}$ is surjective. 

\vspace{2mm}

Finally, if $H_f^1(\Q, \ad^0\rho_{\bfitx}^{\spin})$ vanishes, then the exact sequence in Lemma \ref{Lemma: exact sequence for tangent spaces} implies that \[
    \dim_{k_{\bfitx}} H_{\bbF_{\bullet}^{\bfitx}}^1(\Q, \ad^0\rho^{\spin}_{\bfitx}) \leq g.
\] Since $R_{\bfitx, \bbF_{\bullet}^{\bfitx}}^{\univ}$ is a local noetherian ring, its Krull dimension is bounded by the dimension of its tangent space (\cite[\href{https://stacks.math.columbia.edu/tag/00KD}{Section 00KD}]{stacks-project}), \emph{i.e.}, $\dim R_{\bfitx, \bbF_{\bullet}^{\bfitx}}^{\univ} \leq g$. Moreover, we also know from \textit{loc. cit.} that the equality holds if and only if $R_{\bfitx, \bbF_{\bullet}^{\bfitx}}^{\univ}$ is regular. However, since $\calE_0$ is equidimensional and finite over $\calW$, we know that $\dim\bbT_{\bfitx} = \dim \calW = g$. Therefore, \[
    g \geq \dim R_{\bfitx, \bbF_{\bullet}^{\bfitx}}^{\univ} \geq \dim \bbT_{\bfitx} = g
\] and $R_{\bfitx, \bbF_{\bullet}^{\bfitx}}^{\univ}$ is regular of dimension $g$. To conclude the proof, suppose $\fraka = \ker(R_{\bfitx, \bbF_{\bullet}^{\bfitx}}^{\univ} \rightarrow \bbT_{\bfitx})$ is non-zero and so we can identify $\bbT_{\bfitx}$ with $R_{\bfitx, \bbF_{\bullet}^{\bfitx}}^{\univ}/\fraka$. Since $R_{\bfitx, \bbF_{\bullet}^{\bfitx}}^{\univ}$ is a regular local ring, it is a domain (\cite[\href{https://stacks.math.columbia.edu/tag/00NP}{Lemma 00NP}]{stacks-project}). We then obtain a contradiction\[
    g = \dim R_{\bfitx, \bbF_{\bullet}^{\bfitx}}^{\univ} > \dim R_{\bfitx, \bbF_{\bullet}^{\bfitx}}^{\univ}/\fraka = \dim \bbT_{\bfitx} = g.
\]
\end{proof}

Due to the nice property stated in Proposition \ref{Proposition: relation between R and T}, we will from now on assume the truthfulness of Hypothesis \ref{Hypothesis: minimal ramification}.

\begin{Corollary}\label{Corollary: main result for BK Selmer group}
Suppose Hypothesis \ref{Hyp: associated Gal. rep}, Hypothesis \ref{Hyp: spin functoriality}, and Hypothesis \ref{Hypothesis: minimal ramification} hold. Assume the following also hold:\begin{enumerate}
    \item[$\bullet$] The cuspidal automorphic representation $\pi_{\bfitx}$ of $\GL_{2^g}(\A_L)$ associated with $\rho_{\bfitx}$ as in Hypothesis \ref{Hyp: spin functoriality} is regular algebraic and polarised (see, for example, \cite[\S 2.1]{BLGGT14}). 
    \item[$\bullet$] The image $\rho_{\bfitx}(\Gal_{L(\zeta_{p^{\infty}})})$ is enormous (see \cite[Definition 2.27]{Newton--Thorne}).
\end{enumerate} Then \begin{enumerate}
    \item[(i)]  $H_f^1(\Q, \ad^0\rho_{\bfitx}^{\spin}) = 0$; and
    \item[(ii)] $R_{\bfitx, \bbF_{\bullet}^{\bfitx}}^{\univ} \simeq \bbT_{\bfitx}$.
\end{enumerate}  
\end{Corollary}
\begin{proof}
By the discussion around (\ref{eq: inclusions of Galois cohomologies}), we have \[
    H_f^1(\Q, \ad^0\rho_{\bfitx}^{\spin}) \subset H_f^1(\Q, \ad^0\rho_{\bfitx}).
\] However, the latter space vanishes by \cite[Theorem 5.3]{Newton--Thorne} and so we conclude by Proposition \ref{Proposition: relation between R and T}.
\end{proof}

We conclude this paper with another situation when one can also deduce the vanishing of the adjoint Bloch--Kato Selmer group. 

\begin{Corollary}\label{Corollary: etale + R=T implies vanishing of adjoint BK}
Suppose Hypothesis \ref{Hyp: associated Gal. rep} and Hypothesis \ref{Hypothesis: minimal ramification} hold. Suppose the weight map is étale at $\bfitx$ and suppose the canonical morphism $R_{\bfitx, \bbF_{\bullet}^{\bfitx}}^{\univ} \rightarrow \bbT_{\bfitx}$ is an isomorphism. Then, \[
    H_f^1(\Q, \ad^0\rho_{\bfitx}^{\spin}) = 0.
\]
\end{Corollary}
\begin{proof}
Observe the following sequence of isomorphisms \begin{align*}
    \Omega_{\bbT_{\bfitx}/R_{\wt(\bfitx)}}^1\otimes_{\bbT_{\bfitx}} k_{\bfitx} & \simeq \Omega_{R_{\bfitx, \bbF_{\bullet}^{\bfitx}}^{\univ}/R_{\wt(\bfitx)}}^1\otimes_{R_{\bfitx, \bbF_{\bullet}^{\bfitx}}^{\univ}} k_{\bfitx}\\
    & \simeq \Omega_{R_{\bfitx, \bbF_{\bullet}^{\bfitx}}^{\univ}/R_{\wt(\bfitx)}}^1 \widehat{\otimes}_{R_{\bfitx, \bbF_{\bullet}^{\bfitx}}^{\univ}} R_{\bfitx, f}^{\univ} \otimes_{R_{\bfitx, f}^{\univ}} k_{\bfitx}\\
    & \simeq \Omega_{R_{\bfitx, \bbF_{\bullet}^{\bfitx}}^{\univ}/R_{\wt(\bfitx)}}^1 \widehat{\otimes}_{R_{\bfitx, \bbF_{\bullet}^{\bfitx}}^{\univ}} R_{\bfitx, \bbF_{\bullet}^{\bfitx}}^{\univ} \otimes_{R_{\wt(\bfitx)}} k_{\bfitx} \otimes_{R_{\bfitx, f}^{\univ}} k_{\bfitx}\\
    & \simeq \Omega_{R_{\bfitx, \bbF_{\bullet}^{\bfitx}}^{\univ}/R_{\wt(\bfitx)}}^1 \otimes_{R_{\wt(\bfitx)}} k_{\bfitx} \otimes_{R_{\bfitx, f}^{\univ}} k_{\bfitx}\\
    & \simeq \Omega_{R_{\bfitx, f}^{\univ}/k_{\bfitx}}^1 \otimes_{R_{\bfitx, f}^{\univ}} k_{\bfitx}.
\end{align*} Here, the first isomorphism follows from the assumption $R_{\bfitx, \bbF_{\bullet}^{\bfitx}}^{\univ} \simeq \bbT_{\bfitx}$ and the third and the final isomorphism follows from Lemma \ref{Lemma: specialisation of the deformation ring}. Therefore, we have \begin{align*}
    \dim_{k_{\bfitx}} H_f^1(\Q, \ad^0\rho_{\bfitx}^{\spin}) & = \dim_{k_{\bfitx}} \Hom_{k_{\bfitx}}(\Omega_{R_{\bfitx, f}^{\univ}/k_{\bfitx}}^1 \otimes_{R_{\bfitx, f}^{\univ}} k_{\bfitx}, k_{\bfitx})\\
    & = \dim_{k_{\bfitx}} \Hom_{k_{\bfitx}}(\Omega_{\bbT_{\bfitx}/R_{\wt(\bfitx)}}^1\otimes_{\bbT_{\bfitx}} k_{\bfitx}, k_{\bfitx})\\
    & = \dim_{k_{\bfitx}} \Omega_{\bbT_{\bfitx}/R_{\wt(\bfitx)}}^1\otimes_{\bbT_{\bfitx}} k_{\bfitx}\\
    & \leq \length_{\bbT_{\bfitx}}\Omega_{\bbT_{\bfitx}/R_{\wt(\bfitx)}}^1.
\end{align*} However, since the weight map is étale at $\bfitx$, $\length_{\bbT_{\bfitx}}\Omega_{\bbT_{\bfitx}/R_{\wt(\bfitx)}}^1 = 0$. We then conclude the result.
\end{proof}

\begin{Remark}\label{Remark: relation to the pairing}
\normalfont 
Suppose we are now working with the strict Iwahori level Siegel modular variety and suppose the \emph{$p$-adic adjoint $L$-function} $L^{\adj}$ in \cite{Wu-pairing} is defined at $\bfitx$. Suppose we are also in the situation of Corollary \ref{Corollary: etale + R=T implies vanishing of adjoint BK}. Then, by \cite[Theorem 4.3.5]{Wu-pairing}, we then have \[
    \ord_{\bfitx} L^{\adj} = 0 = \dim_{k_{\bfitx}} H_f^1(\Q, \ad^0\rho_{\bfitx}^{\spin}).
\] Such a relation then (conjecturally) justifies the name of $L^{\adj}$. More generally, in light of the Bloch--Kato conjecture (Conjecture \ref{Conjecture: BK}), we expect that, if $\bfitx$ is a smooth point, \[
    \ord_{\bfitx} L^{\adj} = \dim_{k_{\bfitx}} H_f^1(\Q, \ad^0 \rho_{\bfitx}^{\spin}).
\] In particular, since $H_f^1(\Q, \ad^0 \rho_{\bfitx}^{\spin})$ is expected to vanish, it seems fair to expect that, if $\bfitx$ is a smooth point with small slope and at which $L^{\adj}$ is defined, the weight map is étale at $\bfitx$. When $g=1$, this is \cite[Theorem 2.16]{Bellaiche-critical}.  
\end{Remark}

\subsection{Appendix: Examples}\label{subsection: examples}
In our main result (Corollary \ref{Corollary: main result for BK Selmer group}), many assumptions are made. The purpose of this appendix is to provide examples as evidence that we did not make vacuum assumptions. For the convenience of the readers, we recall the suppositions: 
\begin{enumerate}
    \item[$\bullet$]  The point $\bfitx\in \calE_0$ corresponds to a $p$-stabilisation of an eigenclass of tame level. 
    \item[$\bullet$] Hypothesis \ref{Hyp: associated Gal. rep} holds. In particular, one can attach a $\GSpin_{2g+1}$-valued Galois representation $\rho_{\bfitx}^{\spin}$ to $\bfitx$, which is crystalline when restricting to $\Gal_{\Q_p}$. And we let $\rho_{\bfitx} := \spin \circ \rho_{\bfitx}^{\spin}$.
    \item[$\bullet$] Hypothesis \ref{Hyp: spin functoriality} holds, \emph{i.e.}, the potential spin functoriality holds. Moreover, the cuspidal automorphic representation $\pi_{\bfitx}$ of $\GL_{2^g}(\A_L)$ is regular algebraic and polarised.
    \item[$\bullet$] Hypothesis \ref{Hypothesis: minimal ramification} holds, \emph{i.e.}, the Galois representation valued in the Hecke algebra satisfies the desired deformation conditions. 
    \item[$\bullet$] The restriction $\rho_{\bfitx}|_{\Gal_{\Q_p}}$ admits a refinement $\bbF_{\bullet}^{\bfitx}$ that satisfies (REG) and (NCR). 
    \item[$\bullet$] The restriction $\rho_{\bfitx}|_{\Gal_{\Q_p}}$ is not isomorphic to its twist by the $p$-adic cyclotomic character. 
    \item[$\bullet$] The image $\rho_{\bfitx}(\Gal_{L(\zeta_{p^{\infty}})})$ is enormous. 
\end{enumerate}

In what follows, we discuss examples for $g=1, 2$. In these cases, Hypothesis \ref{Hyp: associated Gal. rep} is well-understood by mathematicians (see Remark \ref{Remark: on Hyp 1}) and so we will skip the discussions. Note also that Hypothesis \ref{Hyp: spin functoriality} is trivial when $g=1$.

\begin{Example}
\normalfont 
Our first example concerns $g=1$ and suppose $\bfitx$ corresponds to a $p$-stabilisation of a weight-$k$ normalised newform $f = \sum_{n>0}a_nq^n$ of level $\Gamma(N)$ with $p\nmid N$ and $p > k \geq 2$. We assume that $f$ is not a CM form. The Hecke polynomial of $f$ at $p$ is given by \[
    Y^2 - a_pY + p^{k-1}.
\] We assume that the two roots $\alpha$, $\beta$ are distinct. 

In this case, by the result in \cite{FaltingsHT}, we know that the associated Galois representation \[
    \rho_{\bfitx} = \rho_f: \Gal_{\Q} \rightarrow \GL_2(\overline{\Q}_p)
\]
of $f$ is irreducible and of Hodge--Tate weight $(0, k-1)$ at $p$. Moreover, $\rho_f|_{\Gal_{\Q_p}}$ is crystalline by \cite[Theorem 1.2.4]{Scholl-motive}.

Let's now check the conditions imposed on $\rho_f$. First of all, it is easy to see that $\rho_f|_{\Gal_{\Q_p}}$ is not isomorphic to its twist by the $p$-adic cyclotomic character. Moreover, since $\rho_f|_{\Gal_{\Q_p}}$ is a 2-dimensional crystalline representation, it satisfies (NCR) by \cite[Remark 2.4.6]{Bellaiche-Chenevier-book}. To check (REG), note that the characteristic polynomial of the crystalline Frobenius $\varphi$ is equal to the Hecke polynomial at $p$ (\cite[Theorem 1.2.4]{Scholl-motive}). Since $\alpha \neq \beta$, $\alpha$ and $\alpha\beta = p^{k-1}$ are eigenvalues of the crystalline Frobenii on $\D_{\cris}(\rho_{f}|_{\Gal_{\Q_p}})$ and $\D_{\cris}(\wedge^2 \rho_{f}|_{\Gal_{\Q_p}})$ respectively with multiplicity one.  Finally, combining the result in \cite{Newton--Thorne_symmetric} and \cite[Example 2.3.4]{Newton--Thorne}, we know that $\rho_f(\Gal_{\Q(\zeta_{p^{\infty}})})$ is enormous. 

It remains to check Hypothesis \ref{Hypothesis: minimal ramification}. The first point in Hypothesis \ref{Hypothesis: minimal ramification} is trivial. Additionally, the second point follows from that the Galois representations of finite-slope overconvergent eigenforms are triangulline (\cite[Theorem 6.3]{Kisin-2003} and \cite[Proposition 4.3]{Colmez-triangulline}). Finally, by the discussions in \cite[\S 3.2]{DFG}, we know that the deformation valued in the Hecke algebra is minimally ramified at $\ell|N$ and hence the last point in Hypothesis \ref{Hypothesis: minimal ramification}. 
\hfill\qedsymbol
\end{Example}

\begin{Example}
\normalfont
In this example, we let $g=2$ and suppose $\bfitx$ corresponds to a $p$-stabilisation of a discrete series cuspidal automorphic representation $\pi_{\GSp_4}$ of $\GSp_{4}(\A_{\Q})$ which is spherical at $p$ and of cohomological weight $k = (k_1, k_2)$ with $k_1 \geq k_2 \geq 0$. We assume $\pi_{\GSp_4}$ is neither CAP nor endoscopic. 

In this case, the Galois representation \[
    \rho_{\bfitx}^{\spin} = \rho_{\pi_{\GSp_4}}: \Gal_{\Q} \rightarrow \GSpin_5(k_{\bfitx}) \simeq \GSp_4(k_{\bfitx})
\]
associated to $\pi_{\GSp_4}$ is irreducible, where $k_{\bfitx}$ is a large enough finite extension of $\Q_p$. The Hodge--Tate weight of $\rho_{\pi_{\GSp_4}}|_{\Gal_{\Q_p}}$ is $(0, k_2+1, k_1+2, k_1+k_2+3)$. Moreover, $\rho_{\pi_{\GSp_4}}|_{\Gal_{\Q_p}}$ is crystalline and its characteristic polynomial of the crystalline Frobenius coincides with the Hecke polynomial at $p$. We impose the following assumptions on the Galois representation $\rho_{\pi_{\GSp_4}}$: 
\begin{enumerate}
    \item[$\bullet$] We assume that the Hecke polynomial at $p$ is decomposed as \[
        P_{\Hecke, p} = (Y-\alpha)(Y-\beta)(Y-\gamma)(Y-\delta)
    \] with distinct roots $\alpha$, $\beta$, $\gamma$, $\delta$. Note that all the roots are of Weil weight $k_1+k_2-3$ (\cite[Theorem 1]{Weissauer}).
    \item[$\bullet$] Let $\Sbad$ be the finite set, consisting of primes at which $\pi_{\GSp_4}$ is not spherical.\footnote{ Note the different definitions of $\Sbad$ here and the main body of the paper.} We assume that if $\ell\in \Sbad$, then the restriction of the residual representation $\overline{\rho}_{\pi_{\GSp_4}}|_{I_{\ell}}$ is absolutely irreducible and $p\nmid \ell^{12}-1$.
\end{enumerate}

Note that the spin representation $\spin: \GSpin_5 \rightarrow \GL_4$ is nothing but the natural embedding of $\GSp_4 \hookrightarrow \GL_4$. Following the discussion in \cite[\S 2]{DZ-bigimage}, we know that there is a cuspidal automorphic representation $\pi_{\GL_4}$ of $\GL_4(\A_{\Q})$, which is regular algebraic and polarised,\footnote{ The terminology `polarised' is called `essentially self-dual' in \emph{op. cit.}.} such that the associated Galois representation $\pi_{\GL_4}$ is  \[
    \rho_{\bfitx} = \rho_{\pi_{\GL_4}} : \Gal_{\Q} \xrightarrow{\rho_{\pi_{\GSp_4}}} \GSp_4(\overline{\Q}_p) \hookrightarrow \GL_4(\overline{\Q}_p).
\]

Let's now check the conditions on $\rho_{\pi_{\GL_4}}$. First of all, one sees that $\rho_{\pi_{\GL_4}}|_{\Gal_{\Q_p}}$ is not isomorphic to its twist by the $p$-adic cyclotomic character by comparing the Hodge--Tate weights on both sides. Next, since $0 < k_2+1 < k_1+2 < k_1+k_2+3$, we can apply \cite[Proposition 2.4.7]{Bellaiche-Chenevier-book} and know that (NCR) is satisfied. Moreover, since $\alpha$, $\beta$, $\gamma$, $\delta$ are distinct but with the same Weil weight, we see that (REG) is also satisfied.

We show that $\rho_{\pi_{\GL_4}}(\Gal_{\Q(\zeta_{p^{\infty}})})$ is enormous. First, note that if the Zariski closure of $\rho_{\pi_{\GL_4}}(\Gal_{\Q(\zeta_p^{\infty})})$ in $\GL_4$ contains $\Sp_4$, then it is enormous by \cite[Lemma 2.33]{Newton--Thorne}. Using the strategy in [\emph{op. cit}, Example 2.34], it is enough to show that the Zariski closure of $\rho_{\pi_{\GL_4}}(\Gal_{\Q})$ in $\GL_4$ contains $\Sp_4$. However, since $\pi_{\GSp_4}$ is neither CAP nor endoscopic, the desired result follows from the discussion in \cite[\S 9.3.4]{Hida-Tilouine_2015}.

Finally, we check Hypothesis \ref{Hypothesis: minimal ramification}. The first point holds by the argument of \cite[Lemma 4.3.3]{Genestier-Tilouine}. The second point holds due to the fact that the Galois representations of finite-slope overconvergent Siegel modular forms of genus 2 are triangulline (\cite[Theorem 13.3]{Conti-2019}). For the third point, we first remark that, by \cite[Lemma 4.3.6]{Genestier-Tilouine}, $\rho_{\pi_{\GSp_4}}(I_{\ell}) = \rho_{\pi_{\GL_4}}(I_{\ell})$ is finite of order prime to $p$. Hence, we can verify Hypothesis \ref{Hypothesis: minimal ramification} (iii) by the following lemma: 

\begin{Lemma}
Let $A\in \Ar$ and let $\rho_A: \Gal_{\Q} \rightarrow \GSp_4(A)$ be a representation such that $\rho_A \otimes_{A}k_{\bfitx} \simeq \rho_{\pi_{\GSp_4}}$. Then, for $\ell\in \Sbad$, $\rho_A(I_{\ell})$ is finite of order prime to $p$. In particular, we have \[
    \rho_A(I_{\ell}) \simeq \rho_{\pi_{\GSp_4}}(I_{\ell}).
\]
\end{Lemma}
\begin{proof}
The proof of this lemma is basically \cite[Lemma 4.3.3]{Genestier-Tilouine}. 

Note first that $\ker(\GSp_4(A) \rightarrow \GSp_4(k_{\bfitx}))$ is a locally pro-$p$ group. Hence, it suffices to show that $\rho_A(I_{\ell})$ is finite of order prime to $p$. 

To show this, we make a further reduction. Let $I_{\ell}^{(\ell)}$ be the pro-$\ell$ Sylow subgroup of $I_{\ell}$. That is, $I_{\ell}^{(\ell)}$ is the Galois group of the maximal tamely ramified extension $\Q_{\ell}^{\mathrm{tame}}$ of $\Q_{\ell}$. Since $I_{\ell}^{(\ell)}$ is pro-$\ell$ and $\GSp_4(A)$ is locally pro-$p$, the image $\rho_A(I_{\ell}^{(\ell)})$ is finite. Let $\widetilde{F}$ be the finite Galois extension of $\Q_{\ell}^{\mathrm{tame}}$ defined by $\ker\rho_A|_{I_{\ell}^{(\ell)}}$. Then, there exists a finite Galois extension $F$ of $\Q_{\ell}^{\mathrm{unr}}$ such that $\widetilde{F} = F\Q_{\ell}^{\mathrm{tame}}$. Moreover, if we let $I_{F, \ell} := \Gal(\overline{\Q}_{\ell}/F)$ and $I_{F, \ell}^{(\ell)} := \ker \rho_A|_{I_{\ell}^{(\ell)}}$, then \[
    I_{\ell}/I_{\ell}^{(\ell)} \simeq I_{F, \ell}/I_{F, \ell}^{(\ell)}
\]
and so it suffice to show know that $\rho_A(I_{F, \ell})$ is finite, providing $\rho_A|_{I_{F, \ell}^{(\ell)}}$ being trivial.

Recall that \[
     I_{F, \ell}/I_{F, \ell}^{(\ell)} \simeq I_{\ell}/I_{\ell}^{(\ell)} \simeq \prod_{q\neq \ell}\Z_{q}(1).
\] Therefore, via the isomorphisms above, we only need to show $\rho_A(\Z_p(1))$ is trivial. We prove this in the following two steps:

Let $\xi \in \Z_p(1)$ be a topological generator. We first claim that $\rho_A(\xi)$ is unipotent. Suppose $\rho_A(\xi)$ is not unipotent, then it would admit an eigenvalue $\epsilon \neq 1$. By conjugating with the $\rho_A(\Frob_{\ell})$, we see that $\rho_A(\xi)$ and $\rho_A(\xi^{\ell})$ have same eigenvalues. By iterating such a process, we learn that $\{\epsilon, \epsilon^{\ell}, \epsilon^{\ell^2}, \epsilon^{\ell^3}\}$ is a subset of eigenvalues of $\rho_A(\xi)$ while $\{\epsilon^{\ell}, \epsilon^{\ell^2}, \epsilon^{\ell^3}, \epsilon^{\ell^4}\}$ is a subset of eigenvalues of $\rho_A(\xi^{\ell})$. Comparing these two sets, one deduces the identity \[
    \epsilon = \epsilon^{\ell^{12}}.
\]
In particular, $\epsilon$ is a root of unity. On the other hand, since $\xi$ is a topological generator of the pro-$p$ group, $\epsilon$ can only be a $p$-power root of unity. Thus, we have \[
    p| \ell^{12} -1,
\] 
which contradicts to the assumption that $p\nmid \ell^{12} - 1$.

Finally, we claim that $\rho_A(\xi) = 1$. If $\rho_A(\xi) \neq 1$, then it would fix a subspace $V$ of $A^4$, which is stable under the action of $I_{\ell}$. On the other hand, since $\overline{\rho}_{\pi_{\GSp_4}}|_{I_{\ell}}$ is irreducible, $\rho_{\pi_{\GSp_{4}}}|_{I_{\ell}}$ is irreducible and so is $\rho_A|_{I_{\ell}}$. The existence of $V$ then contradicts the irreducibility. 
\end{proof}
\hfill\qed
\end{Example}

\printbibliography[heading=bibintoc]

\vspace{15mm}

\begin{tabular}{l}
    University of Warwick   \\
    Mathematics Institute\\
    Coventry, UK\\
    \textit{E-mail address: }\texttt{ Ju-Feng.Wu@warwick.ac.uk }
\end{tabular}

\end{document}